\newcommand{\rat}[1]{\renewcommand{\arraystretch}{#1}}
\newtheorem{theorem}{Theorem}
\crefname{theorem}{theorem}{Theorems}
\Crefname{Theorem}{Theorem}{Theorems}
\newaliascnt{lemma}{theorem}
\newtheorem{lemma}[lemma]{Lemma}
\crefname{lemma}{lemma}{lemmas}
\Crefname{Lemma}{Lemma}{Lemmas}
\newaliascnt{corollary}{theorem}
\newtheorem{corollary}[corollary]{Corollary}
\crefname{corollary}{corollary}{corollaries}
\Crefname{Corollary}{Corollary}{Corollaries}
\newaliascnt{proposition}{theorem}
\newtheorem{proposition}[proposition]{Proposition}
\crefname{proposition}{proposition}{propositions}
\Crefname{Proposition}{Proposition}{Propositions}
\newaliascnt{definition}{theorem}
\newtheorem{definition}[definition]{Definition}
\crefname{definition}{definition}{definitions}
\Crefname{Definition}{Definition}{Definitions}
\newaliascnt{definition-proposition}{theorem}
\crefname{definition-proposition}{definition-proposition}{definitions-propositions}
\Crefname{Definition-Proposition}{Definition-Proposition}{Definitions-Propositions}
\newaliascnt{remark}{theorem}
\newtheorem{remark}[remark]{Remark}
\crefname{remark}{remark}{remarks}
\Crefname{Remark}{Remark}{Remarks}
\crefname{example}{example}{examples}
\Crefname{Example}{Example}{Examples}
\crefname{figure}{figure}{figures}
\Crefname{Figure}{Figure}{Figures}
\crefname{table}{table}{tables}
\Crefname{Table}{Table}{Tables}
\Crefname{assumption}{\textbf{H}\hspace{-4pt}}{\textbf{H}\hspace{-4pt}}
\crefname{assumption}{\textbf{H}}{\textbf{H}}
\Crefname{assumptionL}{\textbf{L}\hspace{-6pt}}{\textbf{A}\hspace{-6pt}}
\crefname{assumptionL}{\textbf{L}}{\textbf{L}}
\Crefname{assumptionB}{\textbf{B}\hspace{-3pt}}{\textbf{S}\hspace{-3pt}}
\crefname{assumptionB}{\textbf{B}}{\textbf{B}}
\Crefname{probleme}{\textbf{Problem}\hspace{-3pt}}{\textbf{Problem}\hspace{-3pt}}
\crefname{probleme}{\textbf{Problem}}{\textbf{Problem}}
\Crefname{assumptionG}{\textbf{G}\hspace{-3pt}}{\textbf{G}\hspace{-3pt}}
\crefname{assumptionG}{\textbf{G}}{\textbf{G}}
\newmdenv[
    rightline=true,
    bottomline=false,
    topline=false,
    leftline=false,
    rightmargin = 0pt,
    innertopmargin=0pt,
    innerbottommargin=0pt,
    innerrightmargin=0pt,
    innerleftmargin=0pt,
    linewidth=0.75pt]{rline}
\newmdenv[
    rightline=true,
    bottomline=false,
    topline=false,
    leftline=false,
    rightmargin = 0pt,
    innertopmargin=0pt,
    innerbottommargin=0pt,
    innerrightmargin=7pt,
    innerleftmargin=0pt,
    linewidth=0.75pt]{rline2}
\let\orgdescriptionlabel\descriptionlabel
\renewcommand*{\descriptionlabel}[1]{%
  \let\orglabel\label
  \let\label\@gobble
  \phantomsection
  \edef\@currentlabel{#1\unskip}
  \let\label\orglabel
  \orgdescriptionlabel{#1}
}
\renewcommand{\epsilon}{\varepsilon}
\renewcommand{\P}{\operatorname{P}}
\newcommand{\Q}{\operatorname{Q}}
\newcommand{\argmin}{\operatorname{argmin}}
\def\bigone{{1}}
\newenvironment{proof}[1][{\textit{Proof:}}]{\begin{trivlist} \item[\em{\hskip \labelsep #1}]}{\ensuremath{\square} \end{trivlist}}
\renewcommand{\bf}[1]{\mathbf{#1}}
\newcommand{\E}{\mathsf{E}}
\newcommand{\Id}{\operatorname{I}}
\providecommand{\eps}{\varepsilon}
\renewcommand{\P}{\operatorname{P}}
\renewcommand{\Q}{\operatorname{Q}}
\newcommand{\divergence}{\operatorname{div}}
\renewcommand{\argmin}{\operatornamewithlimits{argmin}}
\newcommand{\R}{\mathbb R}
\newcommand{\X}{\mathcal{X}}
\newcommand{\TC}{\alpha}
\newcommand{\T}{\top}
\newcommand{\G}{\mathcal{G}}
\renewcommand{\H}{\mathcal{H}}
\def\DD{\operatorname{D}}
\def\xb{\mathsf{Z}} 
\def\yb{\mathsf{Y}} 
\newcommand{\likel}[1]{%
    \IfEqCase{#1}{%
        {l}{p_{\operatorname{log}}}%
        {p}{p_{\operatorname{pro}}}%
    }[\PackageError{logl}{Undefined option to probl: #1}{}]%
}%
\newcommand{\logl}[1]{%
    \IfEqCase{#1}{%
        {l}{\ell_{\operatorname{log}}}%
        {p}{\ell_{\operatorname{pro}}}%
    }[\PackageError{logl}{Undefined option to logl: #1}{}]%
}%
\newcommand{\pib}[1]{%
    \IfEqCase{#1}{%
    	{g}{\pi}
        {l}{\pi_{\operatorname{log}}}%
        {p}{\pi_{\operatorname{pro}}}%
    }[\PackageError{Ub}{Undefined option to Ub: #1}{}]%
}%
\newcommand{\Ub}[1]{%
    \IfEqCase{#1}{%
        {l}{U_{\operatorname{log}}}%
        {p}{U_{\operatorname{pro}}}%
    }[\PackageError{Ub}{Undefined option to Ub: #1}{}]%
}%
\newcommand{\hh}{\widehat{h}}
\newcommand{\sh}{h^*}
\newcommandx{\bVn}[3][1=]{\overline{V}^{\operatorname{#1}}_{n,#3}(#2)}
\newcommandx{\Vn}[3][1=]{{V}^{\operatorname{#1}}_{n,#3}(#2)}
\newcommandx{\ESV}[2][1=]{\overline{V}^{\operatorname{#1}}_{n}(#2)}
\newcommandx{\SV}[2][1=]{{V}^{#1}_{n}(#2)}
\newcommandx{\AV}[2][1=]{{V}^{#1}_{\infty}(#2)}
\def\rmd{\mathrm{d}}
\def\ie{i.e. }
\newcommandx\sequence[3][2=,3=]
\newcommandx\sequencePar[3][2=,3=]
\def\rset{\ensuremath{\mathbb{R}}}
\def\nset{\ensuremath{\mathbb{N}}}
\def\zset{\ensuremath{\mathbb{Z}}}
\def\eqsp{\,}
\newcommand{\eqdef}{\overset{\text{\tiny def}} =}
\newcommand{\an}[1]{\todo[color=green!20,inline]{{AN:} #1}}
\newcommandx{\norm}[2][1=]{\ifthenelse{\equal{#1}{}}{\left\Vert #2 \right\Vert}{\left\Vert #2 \right\Vert^{#1}}}
\newcommandx{\lawlim}[1][1=\mu]{\overset{\P_{#1}-\text{weakly}}{\underset{n\to+\infty}{\Longrightarrow}}}
\def\iid{i.i.d.}
\newcommandx{\varinf}[1][1=]{\ifthenelse{\equal{#1}{}}{\sigma^2_\infty}{\sigma^2_{\infty,#1}}}
\newcommand{\set}[2]{\left\{#1\,:\eqsp #2\right\}}
\newcommand{\Norm}{\mathcal{N}}
\DeclareMathOperator{\Med}{Med}
\def\ls{l}
\def\rme{\mathrm{e}}
\def\rmi{\mathrm{i}}
\newcommand{\tvnorm}[1]{\| #1 \|_{\operatorname{TV}}}
\def\Xset{\mathsf{X}}
\def\Cset{\mathsf{S}}
\def\Xsigma{\mathcal{X}}
\def\tf{\tilde{f}}
\def\th{\tilde{h}}
\newcommandx{\CPE}[3][1=]{\mathsf{E}_{#1}\left[\left. #2 \, \right| #3 \right]} 
\def\sPoif{\hat{f}}
\newcommand{\ltwo}{\ensuremath{\mathrm{L}^2}}
\newcommand\lp{\ensuremath{\mathrm{L}^p}}
\newcommandx{\PE}[2][1=]{\ensuremath{\mathsf{E}_{#1}\big[ #2 \big]}}
\newcommandx{\PElr}[2][1=]{\ensuremath{\mathsf{E}_{#1}\left[ #2 \right]}}
\newcommandx{\PVar}[1][1=]{\ensuremath{\operatorname{Var}_{#1}}}
\newcommandx{\covcoeff}[3][1=\pi]{\rho_{#1}^{(#2)}(#3)}
\newcommandx{\ecovcoeff}[3][1=]{\hat{\rho}_{#1}^{(#2)}(#3)}
\newcommand{\Vnorm}[2]{\ensuremath{\left\Vert#1\right\Vert_{#2}}}
\newcommandx{\VnormFunc}[3][1=]{\ensuremath{\|#2\|_{{#3}}^{#1}}}
\newcommand{\ps}[2]{\left\langle#1,#2 \right\rangle}
\newcommand{\coint}[1]{\left[#1\right)}
\def\driftfunc{W}
\newcommand{\KL}[2]{\operatorname{KL}(#1 \| #2)}
\def\lipU{\operatorname{L}_U}
\def\mU{\operatorname{m}_U}
\def\tmU{\tilde{\operatorname{m}}_U}
\def\kU{\operatorname{k}_U}
\def\MU{\operatorname{M}_U}
\date{\vspace{-5ex}}
\newcommand{\fraca}[2]{#1/#2}
\newcommand{\indibr}[1]{\mathbbm{1}_{\{#1\}}}
\def\ULA{\operatorname{ULA}}
\def\MALA{\operatorname{MALA}}
\def\RWM{\operatorname{RWM}}
\def\ESVM{\operatorname{ESVM}}
\def\EVM{\operatorname{EVM}} 
\begin{document}

\title{Variance reduction for Markov chains with application to MCMC}

\author{D. Belomestny~\footnote{Duisburg-Essen University, Germany, and HSE University, Russia, \texttt{denis.belomestny@uni-due.de}. },\,  L. Iosipoi~\footnote{HSE University, Russia,   \texttt{iosipoileonid@gmail.com}. } \, E. Moulines~\footnote{Ecole Polytechnique, France, and HSE University, Russia, \texttt{eric.moulines@polytechnique.edu}.},\,   A. Naumov~\footnote{HSE University, Russia,   \texttt{anaumov@hse.ru}. }, and  S. Samsonov~\footnote{HSE University, Russia,  \texttt{svsamsonov@hse.ru}.}}
\maketitle

\abstract{In this paper we propose a novel variance reduction approach for additive functionals of Markov chains based on minimization of an estimate for the asymptotic variance of these functionals over suitable classes of control variates. A distinctive  feature of the proposed approach is  its ability to significantly reduce  the overall finite sample variance. This feature is theoretically demonstrated  by means of a deep non asymptotic analysis of a variance reduced functional as well as by  a thorough simulation study. In particular we apply our method to various MCMC  Bayesian estimation problems where it favourably compares to the existing variance reduction approaches. }
\section{Introduction}
Variance reduction methods  play nowadays a prominent role as a complexity reduction tool in simulation based numerical algorithms like Monte Carlo (MC) or Markov Chain Monte Carlo (MCMC).   Introduction to many of variance reduction techniques can be found in \citet{christian1999monte}, \citet{rubinstein2016simulation}, \citet{GobetBook}, and \citet{glasserman2013monte}. While variance reduction techniques for MC algorithms are well studied, MCMC   algorithms are still waiting for efficient variance reduction methods. Recently one witnessed a revival of interest in this area  with numerous applications to Bayesian statistics, see for example \citet{dellaportas2012control}, \citet{mira2013zero}, \citet{durmus:moulines:2019}, and references therein. The main difficulty in constructing efficient variance reduction methods for MCMC  lies in the dependence between the successive values   of the underlying Markov chain which can significantly increase  the overall variance and needs to be accounted for.
\par
Suppose that we wish to compute $\pi(f)\eqdef\PE[\pi]{f(X)}$, where $X$
is a random vector  with a distribution $\pi$ on   $\Xset\subseteq\mathbb{R}^{d}$ and
$f:\Xset\to\mathbb{R}$ with $f\in \ltwo(\pi)$.
Let \( (X_k)_{k \ge 0} \) be a time homogeneous Markov chain with values in \(\Xset\). 
Denote by $P$ its Markov kernel  and define for any bounded measurable function $f$ 
\begin{equation*}
Pf(x)= \int_\Xset P(x,\rmd y) f(y) \, , \quad x\in \Xset.
\end{equation*}
Assume that $P$ has the unique invariant distribution $\pi$, that is, 
$\int_\Xset \pi(\rmd x) P(x, \rmd y) = \pi(\rmd y)$.
Under appropriate conditions, the Markov kernel $P$ may be shown to converge 
to the stationary distribution \(\pi\), that is, for any $x \in \Xset$,
\begin{equation*}
\lim_{n \to \infty} \tvnorm{P^n(x,\cdot) - \pi}=0,
\end{equation*}
where  $\tvnorm{\mu - \nu} = \sup_{A \in \Xsigma} |\mu(A) - \nu(A)|$
and
{$\Xsigma$ is the Borel $\sigma$-field associated to $\Xset$}.
More importantly, under rather weak assumptions, the ergodic averages
\begin{equation*}
	\pi_n(f)\eqdef n^{-1} \sum_{k=0}^{n-1} f(X_k)
\end{equation*}
satisfy, for any initial distribution, a central limit theorem (CLT) of the form
\begin{eqnarray*}
	\sqrt{n}\bigl[\pi_n(f)-\pi(f)\bigr]
	=n^{-1/2}\sum_{k=0}^{n-1}\bigl[f(X_k)-\pi(f)\bigr]
	\xrightarrow{\mathcal{D}} 
	\mathcal{N}\bigl(0,\AV{f}\bigr)
\end{eqnarray*}
with the asymptotic variance \(\AV{f}\) given by
\begin{equation}
\label{V infinity}
	\AV{f}\eqdef\lim_{n\to \infty} n\, \E_\pi\bigl[\{\pi_n(f) - \pi(f)\}^2\bigr]= \E_\pi\bigl[\tf^2\bigr] + 2 \sum_{k=1}^\infty \E_\pi\bigl[ \tf P^k \tf\bigr],
\end{equation}
where $\tf= f - \pi(f)$. 
This motivates to use ergodic averages $\pi_n(f)$ as a natural estimate for $\pi(f)$.
For a broader discussion of the Markov chain CLT and conditions under which CLT holds, see
\citet{jones2004}, \citet{roberts2004}, and \citet{douc:moulines:priouret:2018}.
\par
One important and widely used class of  variance reduction methods for Markov chains is the method of control variates which is based on subtraction of a zero-mean random variable (control variate) from \(\pi_n(f)\). 
There are several methods to construct such control variates. 
If $\nabla\log\pi$ is known, one can use popular zero-variance control variates 
based on the Stein's identity, see \citet{Assaraf1999} and \citet{mira2013zero}.
A non-parametric extension of such control variates is suggested in \citet{oates2017control}
and \citet{oates2016convergence}.
Control variates can be also obtained using the Poisson equation. Namely,
it was observed by \citet{henderson1997variance}  that the function \(U_g \eqdef g-Pg\) has zero mean with respect to \(\pi\), provided that \(\pi(|g|) < \infty\).  Then the choice \(g=\sPoif\) with \(\sPoif\) satisfying the so-called Poisson equation \(\sPoif(x) - P \sPoif(x) = \tf(x)\)  leads to $f-U_{\sPoif}=f - \sPoif +P \sPoif= \pi(f)$ hence yielding a zero-variance control variate for the empirical mean under \(\pi.\)    Although the Poisson equation   involves the quantity of interest \(\pi(f)\)  and can not be  solved explicitly in most cases, the above idea still can be used to construct some  approximations for the zero-variance control variate \(\sPoif(x)-P \sPoif(x)\). For example,  \citet{henderson1997variance} proposed to compute approximations to the solution of the Poisson equation  for specific Markov chains with particular emphasis on models arising in stochastic network theory. In \citet{dellaportas2012control} and \citet{durmus:moulines:2019}  regression-type control variates are developed and studied for reversible Markov chains. It is assumed in \citet{dellaportas2012control}  that the one-step conditional expectations  can be computed analytically  for a set of basis functions. The authors in \citet{durmus:moulines:2019} proposed another approach tailored to diffusion setting which does  require the computation of integrals of basis functions and only involves  the application of the underlying differential generator.
\par
There is a fundamental issue related to the control variates method.
Since one usually needs to  consider a large class of control variates, one has to choose
a criterion to select the ``best'' control variate from this class. In the literature, such a choice is often based on the least squares criterion or on  the sample variance,
see, for example, 
\citet{mira2013zero}, \citet{oates2017control}, \citet{south2018regularised}.
Note that such  criteria can not properly take into account the correlation structure of the underlying Markov chain and hence can only reduce the first term in \eqref{V infinity}.
\par
In this paper, we propose a novel variance reduction method for Markov chains
based on the \textit{empirical spectral variance minimization}. The proposed method can be viewed as  a generalization of
the approach in \citet{biz2018,belomestny2017variance} to Markov chains.
In a nutshell, given a class of control variates $\G$,
that is, functions $g \in \G$ with $\pi(g)=0$  we consider the  estimator
\[
	\pi_{n}(f-\widehat{g}_n) \eqdef n^{-1}\sum_{k=0}^{n-1} \{f(X_k)-\widehat{g}_n(X_k)\}
\]
with $\widehat{g}_n \eqdef \argmin_{g \in \G} V_n(f - g)$, where  $V_n(f)$ stands for an estimator of the asymptotic variance \(V_\infty(f)\) defined in~\eqref{V infinity}.
This generalization turns out to be  challenging for at least two reasons. First, there is no simple way to estimate the asymptotic variance \(V_\infty(f)\) for Markov chains.
Due to  inherent serial correlation, estimating $V_\infty(f)$ requires specific techniques such as spectral and batch means methods; see \citet{flegal:jones:2010} for a survey on variance estimators and their statistical properties. 
Second, a nonasymptotic analysis of the estimate \(\widehat{g}_n\) is highly nontrivial and requires careful treatment. We perform this analysis for a rather general class of geometrically ergodic Markov chains including the well known Unadjusted Langevin Algorithm (ULA), Metropolis-Adjusted Langevin Algorithm (MALA) and Random Walk Metropolis (RWM).  In particular, we show that under some restrictions on \(\G\),  the rate of the excess for the asymptotic variance can be controlled with high probability as follows:
\[
	V_\infty(f-\widehat{g}_n) - \inf_{g \in \G} V_\infty(f-g) = O\left(n^{-\alpha}\right)
\]
for some \(\alpha\in [1/2,1).\)
Let us stress that our results are rather generic and can cover various types of control variates.
Apart from a comprehensive theoretical analysis we conduct an extensive simulation study including Bayesian inference via MCMC for logistic regression, {Gaussian mixtures} and Bayesian inference of ODE models. We show that for various MCMC algorithms our approach leads to a further significant  variance reduction as compared to the least-squares-type criteria.
\par
The paper is organised as follows. In \Cref{sec: evm} we introduce a general empirical variance minimisation procedure for Markov chains and analyse its properties. In \Cref{sec:mcmc} we apply our theoretical results to a widely used ULA and MALA. In \Cref{sec:num} we conduct a thorough numerical study of the proposed approach. Finally all proofs are collected in \Cref{sec:proofs} and \Cref{sec:appendix}.

\paragraph{Notations}
Let \(\| \cdot \|\) denote the standard Euclidean norm. We say that \(f: \R^d \rightarrow \R\) is \(L-\)Lipschitz function if  \(|f(x)- f(x')| \le L \|x - x'\|\) for any \(x, x' \in \mathbb{R}^{d}\).
\par
For any probability measure $\xi$ on $(\Xset,\Xsigma)$, we denote by $\P_{\xi}$  the unique probability under which $(X_n)_{n\geqslant 0}$ is a Markov chain with Markov kernel $P$ and initial distribution $\xi$. We denote by $\E_{\xi}$  the expectation under the distribution $\P_{\xi}$.
For $\xi$ a probability measure on $(\Xset,\Xsigma)$ and $A \in \Xsigma$, we denote by $\xi P(A)= \int \xi(\rmd x) P(x,A)$; for $h:\Xset \to \rset_+$ a  measurable function,
we denote by $P h(x)= \int P(x,\rmd y) h(y)$.
Given two Markov kernels $P$ and $Q$ on $\Xset \times \Xsigma$, where $\Xsigma$ is the Borel $\sigma$-field on $\Xset$,  we define $PQ (x,A)= \iint P(x,\rmd y) Q(y,A)$. We also define $P^n$ inductively by $P^n= P P^{n-1}$.
Let $\driftfunc: \Xset \to \coint{1,\infty}$ be a measurable function.
The $W$-norm of a function $h:\Xset\to\rset$ is defined as
$\| h \|_{\driftfunc} = \sup_{x \in \Xset} \{|h(x)|/\driftfunc(x)\}$.
For any two probability measures $\mu$ and $\nu$ on $(\Xset,\Xsigma)$ satisfying $\mu(\driftfunc) < \infty$ and $\nu(\driftfunc) < \infty$, the $W$-norm of $\mu-\nu$ is defined as $\Vnorm{\mu-\nu}{\driftfunc} = \sup_{\|f \|_{\driftfunc} \leq 1} |\mu(f) - \nu(f)|$.
\par
We  also use the 2-Wasserstein distance and the Kullback-Leibler divergence in our analysis.
The \(2\)-Wasserstein distance between probability measures \(\mu\) and \(\nu\)
is denoted by $W_2(\mu,\nu) \eqdef \inf_{\zeta} \bigl(  \int_{\Xset\times\Xset} \|x-y\|^2 \,  \rmd\zeta (x,y)\bigr)^{1/2}$, where the infimum is taken over all probability measures \(\zeta\) on the product space \(\Xset\times\Xset\) with marginal distributions \(\mu\) and \(\nu\).
The Kullback-Leibler divergence for \(\mu\) and \(\nu\) is
defined as \(\KL{\mu}{\nu}= \PE[\mu]{\log (\rmd\mu/ \rmd \nu)} \) if \(\nu \ll \mu\) and \(\KL{\mu}{\nu} = \infty\) otherwise. We  say that the probability measure \(\mu\) satisfies the transportation cost-information inequality $\operatorname{T}_2(\TC)$ if there is a constant \(\TC > 0\) such that for any probability measure \(\nu\)
\begin{equation}
\label{eq:transportation-cost-inequality}
W_2(\mu,\nu) \leq \sqrt{2 \TC \KL{\mu}{\nu}}.
\end{equation}
For a real-valued function $h$ on $\Xset\subset\R^d$ and a $\sigma$-finite measure $\lambda$ on $(\Xset,\Xsigma)$
we write $\| h \|_{\lp(\lambda)} =  (\int_{\Xset} |h(x)|^p \lambda(\rmd x))^{1/p}$ with $1\leq p < \infty$.
The set of all functions $h$ with $\| h \|_{\lp(\lambda)}<\infty$ is denoted by $\lp(\lambda)=\lp(\Xset,\lambda)$.
\par
Finally, the Sobolev space is defined as
$W^{s,p}(\Xset) = \left \{ u \in \lp(\lambda) : \DD^{\alpha}u \in \lp(\lambda), \ \forall |\alpha| \leqslant s \right \}$,
where $\lambda$ is the Lebesgue measure,
$\alpha = (\alpha_1,\ldots,\alpha_d)$  is a multi-index  with $|\alpha|=\alpha_1+\ldots+\alpha_d$, and
$\DD^{\alpha}$ stands for differential operator of the form
$\DD^{\alpha} = {\partial^{|\alpha|}}/{\partial x_1^{\alpha_1}\ldots\partial x_d^{\alpha_d}}$.
Here all derivatives are understood in the weak sense.
The weighted Sobolev space $W^{s,p}(\Xset,\langle x \rangle^{\beta})$
for a  polynomial weighting function $\langle x \rangle^{\beta}= (1+\|x\|^2)^{\beta/2}$, $\beta \in \R$, is defined by
\begin{align}
\label{eq:wss}
W^{s,p}(\Xset,\langle x \rangle^{\beta} ) = \bigl\{ u:\ u\cdot \langle x \rangle^{\beta} \in W^{s,p}(\Xset) \bigr\}.
\end{align}
The Sobolev norm is defined as
$\|u\|_{W^{s,p}(\Xset,\langle x \rangle^{\beta})} = \sum_{|\alpha| \leq s} \bigl\| \DD^{\alpha} \bigl(u  \langle x \rangle^{\beta} \bigr) \bigr\|_{\lp(\lambda)}$.
We say that $U \subset W^{s,p}(\Xset,\langle x \rangle^{\beta})$ is norm-bounded if
there exists $c>0$, such that $\|u\|_{W^{s,p}(\Xset,\langle x \rangle^{\beta})} \leq c$ for any $u\in U$.
\par
In what follows, we use the symbol \(\lesssim\) for inequality up to an absolute constant.

\section{Main results}
\label{sec: evm}
\subsection{Empirical spectral variance minimisation (ESVM)}
In this paper, we propose a novel approach to choose a control variate from the set $\G$   referred to as the
\textit{Empirical Spectral Variance Minimisation} (ESVM).
To shorten notation, let us denote by $\H=\H(\G)$  a class of functions $h(x) = f(x) - g(x)$, with $g\in\G$. The main idea of the ESVM approach  is to select a control variate which minimizes
a finite sample estimate for the asymptotic variance $\AV{h}$.
There are several estimates for \(V_\infty(h)\) available in the literature, see \citet{flegal:jones:2010}.
For the sake of clarity we consider only the spectral variance estimator which provides
the most generic way to estimate \(V_\infty(h)\). It is defined as follows.
Let $P$ be a Markov kernel admitting a unique invariant probability $\pi$ and set $\th \eqdef h - \pi(h)$ (assuming $\pi(|h|) < \infty$).
For $s \in \zset_+$, define the stationary lag $s$ autocovariance 
$\covcoeff[\pi]{h}{s}\eqdef \PE[\pi]{\th(X_{s}) \th(X_0)}$
and the lag $s$ sample autocovariance via
\begin{equation}
  \label{eq:empirical-autorcovariance}
 \ecovcoeff[n]{h}{s} \eqdef n^{-1} \sum_{k=0}^{n-s-1} \{h(X_k) - \pi_n(h)\} \{h(X_{k+s}) - \pi_n(h)\} ,
\end{equation}
where $\pi_n(h) \eqdef  n^{-1} \sum_{j=0}^{n-1} h(X_j)$. The spectral variance estimator is based on  truncation and weighting of the sample autocovariance function,
  \begin{equation}
  \label{eq:sv}
   \SV{h} \eqdef \sum_{s=-(b_n-1)}^{b_n-1} w_n(s) \ecovcoeff[n]{h}{|s|}\eqsp,
  \end{equation}
where $w_n$ is the \emph{lag window} and $b_n$ is the \emph{truncation point}.
The truncation point is a sequence of integers and
the lag window is a kernel of the form \(w_n(s)=w(s/b_n)\), where \(w\) is a symmetric non-negative function supported on \([-1,1]\) which fulfils $|w(s)| \leq 1$ for \(s \in [-1,1]\) and \(w(s)=1\) for \(s \in [-1/2,1/2]\).
Other possible choices of the lag window \(w_n\) can be considered, see \citet{flegal:jones:2010}.
In the ESVM approach we choose a control variate by minimizing the spectral variance
\begin{align}\label{eq:vm}
	\widehat{h} \eqdef \argmin_{h \in \H}  \SV{h}.
\end{align}
As the class \(\H\) can be too large making the resulting optimization problem
\eqref{eq:vm} computationally intractable,  we consider a smaller class.
Given $\eps>0$, let  $\H_\eps\subset\H$ consist of centres of the minimal $\eps$-covering net of $\H$ with respect
to the $\ltwo(\pi)$ distance. Further set
\begin{align}\label{eq:vmn}
	\widehat{h}_\eps \eqdef \argmin_{h \in \H_{\eps}} \SV{h}.
\end{align}
In what follows, we assume that $\H$ is a norm-bounded set in $\ltwo(\pi)$.
Hence the set \(\H_{\eps}\) is finite.
The estimates of the form \eqref{eq:vmn}
are referred to as skeleton or sieve estimates in the statistical literature
 (see, for example, \citet{wong95}, \citet{devroy95}, and \citet{geer00}).
\subsection{Theoretical analysis}
In this section, we analyze the proposed ESVM procedure in terms of the excess of the asymptotic variance.
Namely, we provide non-asymptotic bounds of the form:
\begin{equation}\label{eq:rates}
	V_\infty(\widehat{h}_\eps) - \inf_{h \in \H} V_\infty(h) = O\left(n^{-\alpha} \right),
	\quad 1/2<\alpha<1,
\end{equation}
holding with high probability.
\par
Before we proceed to theoretical results, let us define a quantity
which is used to choose a radius $\eps$ of the covering net $\H_\eps$ over which $\widehat{h}_\eps$ is computed.
Given any $\eps>0$, let \(H_{\ltwo(\pi)}(\mathcal{H},\eps)\) be a metric entropy of \(\mathcal{H}\) in \(\ltwo(\pi)\), that is, $H_{\ltwo(\pi)}(\mathcal{H},\eps) \eqdef \log |\H_\eps|$, where
$|\H_\eps|$ is cardinality of $\H_\eps$ (which is assumed to be finite).
Define by \(\gamma_{\ltwo(\pi)}(\mathcal{H},n)\) a so-called \emph{fixed point}
\begin{equation}
\label{eq:definition-fixed-point}
	\gamma_{\ltwo(\pi)}(\mathcal{H},n)
	\eqdef
	\inf \{ \eta>0: \
		H_{\ltwo(\pi)}(\mathcal{H},\eta)\leq {n}\eta^2
	\}.
\end{equation}
Note that a number $\eta>0$ satisfying $H_{\ltwo(\pi)}(\mathcal{H},\eta)\leq {n}\eta^2$ is finite because of monotonicity
of the metric entropy and the mapping $\eta\to {n}\eta^2$ in $\eta$. The quantity \(\gamma_{\ltwo(\pi)}(\mathcal{H},n)\) is used to control the cardinality of $\H_\eps.$
Indeed by choosing $\eps \ge \gamma_{\ltwo(\pi)}(\mathcal{H},n)$ we get $|\H_\eps|\leq \rme^{{n}\eps^2}$.
It is easily seen from the above definition that the fixed point  is a decreasing
function in $n$. Let us {discuss a typical behaviour of \(\gamma_{\ltwo(\pi)}(\mathcal{H},n)\) as \(n\to \infty\)} when $\H$ is a subset of the weighted Sobolev space $W^{s,p}(\Xset,\langle x \rangle^{\beta})$, see  \eqref{eq:wss} for definition.
The following result can be derived from \citet{nickl2007bracketing}.
\begin{proposition}\label{th:main_ex}
	Let $\H$ be a (non-empty) norm-bounded subset of $W^{s,p}(\R^{d}, \langle x \rangle^{\beta})$, where
	$1 < p <  \infty$, {$\beta \in \R$}, and $s - d/p > 0$. Let also for some $\alpha>0$,
	$\|\langle x \rangle^{\alpha - \beta}\|_{\ltwo(\pi)} < \infty$.
	Then it holds
	\[
	\gamma_{\ltwo(\pi)}(\H, n) \lesssim
	\begin{cases}
		n^{-\frac{1}{2+d/s}}  &\text{for } \alpha>s-d/p,\\
		n^{-\frac{1}{2+(\alpha/d+ 1/p)^{-1}}} &\text{for } \alpha < s - d/p.\\
	\end{cases}	
	\]	
\end{proposition}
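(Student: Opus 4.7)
The plan is to reduce the computation of $\gamma_{\ltwo(\pi)}(\H,n)$ to known metric entropy bounds for weighted Sobolev balls and then solve the implicit inequality defining the fixed point in \eqref{eq:definition-fixed-point}.

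First, I would invoke the bracketing entropy bounds of \citet{nickl2007bracketing}, who obtained sharp covering number estimates for norm-bounded subsets of $W^{s,p}(\R^d)$ measured in \emph{weighted} Lebesgue norms of the type $\|f \langle x\rangle^{-\gamma}\|_{L^r(\lambda)}$. Their bounds display exactly the dichotomy that appears in the proposition: when the polynomial decay imposed by the weight is strictly stronger than what Sobolev embedding $W^{s,p} \hookrightarrow L^\infty$ (applicable because $s - d/p > 0$) already provides, the entropy is of the classical Sobolev order $\eta^{-d/s}$; when the weight imposes only milder decay, the tails drive the rate and one gets $\eta^{-(\alpha/d + 1/p)^{-1}}$. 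This is precisely where the threshold $\alpha = s - d/p$ comes from.

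Second, I would transfer these weighted Lebesgue entropy bounds to an $L^2(\pi)$ entropy bound for $\H$. Writing $h_1 - h_2 = \bigl((h_1-h_2) \langle x\rangle^{\beta-\alpha}\bigr)\langle x\rangle^{\alpha-\beta}$ and applying Hölder's inequality in the form
\begin{equation*}
\|h_1 - h_2\|_{\ltwo(\pi)} \leq \bigl\|(h_1-h_2)\langle x\rangle^{\beta-\alpha}\bigr\|_{L^\infty} \cdot \bigl\|\langle x\rangle^{\alpha-\beta}\bigr\|_{\ltwo(\pi)},
\end{equation*}
the hypothesis $\|\langle x\rangle^{\alpha-\beta}\|_{\ltwo(\pi)} < \infty$ ensures that any $\eta$-cover of $\H$ in the appropriate weighted Lebesgue metric is, up to a universal constant, an $\eta$-cover in $\ltwo(\pi)$. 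Combining with step one gives
\begin{equation*}
H_{\ltwo(\pi)}(\H,\eta) \lesssim \eta^{-d/s} \quad \text{for } \alpha > s - d/p,
\end{equation*}
and
\begin{equation*}
H_{\ltwo(\pi)}(\H,\eta) \lesssim \eta^{-(\alpha/d + 1/p)^{-1}} \quad \text{for } \alpha < s - d/p.
\end{equation*}

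Third, I would plug these entropy estimates into the defining inequality $H_{\ltwo(\pi)}(\H,\eta) \leq n\eta^2$ of \eqref{eq:definition-fixed-point}. Writing $\kappa$ for the relevant exponent ($\kappa = d/s$ or $\kappa = (\alpha/d + 1/p)^{-1}$), the inequality $\eta^{-\kappa} \lesssim n\eta^2$ is equivalent to $\eta \gtrsim n^{-1/(2+\kappa)}$, and taking the infimum produces the announced rate.

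The main obstacle is step two: one must choose the weight exponent so that the entropy bound of \citet{nickl2007bracketing}, which is stated in a weighted Lebesgue norm with a specific power, matches the weight $\langle x\rangle^{\alpha-\beta}$ that appears in the integrability hypothesis, and one must verify that the two regimes survive the transfer with the exact exponents claimed. The Sobolev embedding $s - d/p > 0$ is exactly what permits the pointwise bound needed in the Hölder step. Once this bookkeeping is settled, step three is routine algebra.
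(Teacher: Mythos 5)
Your overall strategy---an entropy bound for the weighted Sobolev class followed by solving $H_{\ltwo(\pi)}(\H,\eta)\le n\eta^2$---matches the paper, and your final fixed-point algebra is identical to the paper's. The middle of the argument differs, and your version has two soft spots. The paper applies \Cref{nickl} (Corollary~4 of \citet{nickl2007bracketing}) \emph{directly in} $\ltwo(\pi)$: that result bounds the bracketing entropy $H^{[\,]}_{L^r(\mu)}(\H,\eps)$ for an arbitrary Borel measure $\mu$ under the moment condition $\|\langle x \rangle^{\alpha-\beta}\|_{L^r(\mu)}<\infty$, so taking $M=\{\pi\}$ and $r=2$ uses the hypothesis of the proposition verbatim and no transfer step is needed; the only remaining ingredient is the conversion from bracketing to covering entropy, $H_{\ltwo(\pi)}(\H,\eps)\le H^{[\,]}_{\ltwo(\pi)}(\H,2\eps)$ via midpoints of brackets, which your write-up omits but which is required because the fixed point \eqref{eq:definition-fixed-point} is defined through the covering entropy. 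Your detour through the weighted sup-norm $\|(h_1-h_2)\langle x\rangle^{\beta-\alpha}\|_{L^\infty}$ plus H\"older is a legitimate alternative in principle, but it needs as input the entropy of the Sobolev ball in a \emph{weighted $L^\infty$} metric with the same two-regime exponents; this is a strictly stronger statement than what \Cref{nickl} as quoted provides (an $L^\infty(\mu)$ norm is insensitive to multiplicative weights, so you cannot recover the weighted sup-norm entropy by choosing $r=\infty$ and a suitable $\mu$ there), and checking that the second-regime exponent $(\alpha/d+1/p)^{-1}$ survives in the sup-norm setting is precisely the bookkeeping you defer. Reading off the $L^2(\pi)$ bracketing bound directly, as the paper does, makes both issues disappear.
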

Now let us turn to assumptions needed for \eqref{eq:rates} to hold.
Our first assumption is the \textit{geometric ergodicity} of the Markov chain \((X_k)_{k\geq 0}\).
Let \(W: \Xset \to [1,\infty)\) be a measurable function.
\begin{description}
\item[(GE)\label{GE}]
The Markov kernel $P$  admits a unique invariant probability measure \(\pi\) such that $\pi(W)<\infty$ and there exist  $\varsigma >0, 0< \rho < 1$ such that  for all $x \in \Xset$ and $n \in \nset,$
\begin{equation*}
\Vnorm{P^n(x,\cdot)-\pi}{\driftfunc}\leq \varsigma \driftfunc(x) \rho^{ n}.
\end{equation*}
\end{description}

\begin{description}
\item[(BR)\label{BR}]
There exist a non-empty set $\Cset \subset \Xset$ and real numbers $u>1, J>0$ and $\ls>0$ such that
\begin{equation}\label{eq: geom_int}
\sup_{x \in \Cset} \E_x[u^{-\sigma}] \le J \quad \text{ and } \quad \sup_{x \in \Cset} W(x) \le \ls,
\end{equation}
where $\sigma$ is the return time to the set $\Cset$.
\end{description}

\begin{remark}\label{DS_remark}
Let us introduce drift and small set conditions.
\begin{description}
\item[(DS)\label{DS}] The Markov kernel $P$ is irreducible, aperiodic and
\begin{itemize}
\item there exist measurable function \(W: \Xset \to [1,\infty)\), \(\lambda \in [0, 1)\), \(b < \infty\), and $\ls < \infty$ such that $\lambda + 2b/(1+\ls) < 1$ and
\begin{equation}\label{eq:def_driftcond}
P W \le \lambda W + b \indibr{W \le \ls}.
\end{equation}
\item there exist $m, \varepsilon>0$ such that for all $x, x' \in \{W \le \ls\}$, $\|\delta_x P^m - \delta_{x'} P \|_{\operatorname{TV}} \le 2(1 - \eps)$.
\end{itemize}
\end{description}
It follows from \citet[Theorem~19.5.1]{douc:moulines:priouret:2018}) that \ref{DS} implies \ref{GE} and by \citet[Proposition~14.1.2]{douc:moulines:priouret:2018}) \ref{DS} implies \ref{BR}.
Explicit expressions for the constants $\varsigma$ and $\rho$ may be found in~\citet[Theorem~19.4.1]{douc:moulines:priouret:2018}). Note also that \ref{GE} implies that $P$ is
  positive, aperiodic and condition \ref{DS} is satisfied for some
  small set $\Cset$ and some function $W_0$ verifying $W\leq W_0 \leq \varsigma_0 W$ and constants
  $\varsigma_0 < \infty$, $b_0 < \infty$, $\lambda_0 \in \coint{0,1}$. Hence \ref{GE} implies \ref{BR} for some constants $u > 1$ and $J>0$ (see \citet[Theorem~15.2.4]{douc:moulines:priouret:2018}.
\end{remark}
We also need a \textit{Gaussian concentration} for $\SV{h}$, which
requires an additional assumption on the class $\H$.
It is important to note
that $\SV{h}$ is a quadratic form of $(h(X_j))_{j=0}^{n-1}$.
As a result, without much surprise, concentration results
for the quadratic forms of Markov Chains shall play a key role in our analysis.
We shall consider below  two situations. While the first {situation} corresponds to bounded functions \(h,\) the second one deals with Lipschitz continuous functions \(h.\) In the second case we additionally assume a contraction in \(\operatorname{L}^2\)-Wasserstein distance.  Thus we assume either
\begin{description}
\vspace{-0.1cm}
\item[(B)\label{assumptionB}]  {\it Bounded case}: There exist $B>0$ such that \(\sup_{h\in\H}{|h|_{\infty}}\leq B\) with \({|h|_{\infty}}=\sup_{x\in \Xset}|h(x)|\)
\vspace{-0.45cm}
\end{description}
or
\par
\begin{description}
\vspace{-0.1cm}
\item[(L)\label{assumptionL}]  {\it Lipschitz case}: Functions $h\in\H$ are $L$-Lipschitz.
\vspace{-0.1cm}
\end{description}
together with
\begin{description}
\item[(CW)\label{assumptionW}]  The Markov kernel $P(x,\cdot)$ belongs to $\operatorname{T}_2(\TC)$ for any $x\in \Xset$
and some $\TC>0$. Moreover, there exists \(0 < r < 1\), such that \(
	W_2(P(x, \cdot),P(y, \cdot))  \leq r \| x - y \|
\) for any $x,y \in \Xset$.
\end{description}
The rate of convergence for the variance excess 
is given in the following theorem.
\begin{theorem}\label{th:main_slow}
Assume \ref{GE} and either \ref{assumptionL}+\ref{assumptionW} or \ref{assumptionB}+\ref{BR}. Set \(b_n=2(\log(\fraca{1}{\rho}))^{-1}\log(n)\) and take
	$\eps = \gamma_{\ltwo(\pi)}(\mathcal{H},n)$. Then for any \(\delta \in (0,1),\) there is \(n_0=n_0(\delta)>0\) such that for any $n \ge n_0$ and $x_0 \in \Xset_0$ with $\P_{x_0}-$probability at least $1-\delta,$ it holds
	\begin{equation*}\label{eq:th1}
		\AV{\hh_\eps} - \inf_{h\in\H}\AV{h}
		\lesssim
		 C_1 \log(n) \gamma_{\ltwo(\pi)}(\mathcal{H},n)
		+ C_2\frac{\log(n)\log(\fraca{1}{\delta})}{\sqrt{n}},
	\end{equation*}	
	where \(\lesssim\) stands for inequality up to an absolute constant,
	\begin{equation*}
		C_1 = \frac{K^2}{\log(\fraca{1}{\rho})}
		\quad
		C_2 =
		\frac{\varsigma^{1/2} (\pi(\driftfunc) + \driftfunc(x_0))}{(1-\rho)^{1/2}\log(\fraca{1}{\rho})}
		\biggl(K^2 + \sup_{h \in \H}\VnormFunc[2]{h}{\driftfunc^{1/2}}\biggr),
	\end{equation*}
        $\Xset_0 = \Xset$, $K^2 =  \sqrt{\TC}L^2/(1-r)$ under~\ref{assumptionL}+\ref{assumptionW}  and  $\Xset_0 = \Cset$, $K^2=\beta B^2$ under~\ref{assumptionB}+\ref{BR}, with
\begin{equation*}
	 \beta=\frac{\varsigma \ls}{1- \rho}\biggl(\frac1{\log u}+\frac{J \varsigma \ls }{1-\rho}\biggr)\enspace.
\end{equation*}
\end{theorem}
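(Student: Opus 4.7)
The plan is a standard excess-risk decomposition adapted to the spectral variance criterion. Let $h^\star \in \argmin_{h\in\H} V_\infty(h)$ and pick $h^\star_\eps \in \H_\eps$ with $\|h^\star - h^\star_\eps\|_{\ltwo(\pi)} \leq \eps$. Since $\widehat{h}_\eps$ minimizes $V_n$ over $\H_\eps$, we have
\[
V_\infty(\widehat{h}_\eps) - V_\infty(h^\star)
 \leq [V_\infty(\widehat{h}_\eps) - V_n(\widehat{h}_\eps)]
  + [V_n(h^\star_\eps) - V_\infty(h^\star_\eps)]
  + [V_\infty(h^\star_\eps) - V_\infty(h^\star)],
\]
so it remains to bound a uniform-in-$\H_\eps$ deviation $\sup_{h\in\H_\eps}|V_n(h) - V_\infty(h)|$ and the deterministic approximation error $V_\infty(h^\star_\eps) - V_\infty(h^\star)$. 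The latter I would handle by expanding $V_\infty$ as a series of lag autocovariances and using \ref{GE} to show that $|\rho^{(h)}_\pi(k) - \rho^{(g)}_\pi(k)|$ decays geometrically in $k$ times $\|h-g\|_{\ltwo(\pi)}$ (and a $W$-norm of $h,g$), giving an approximation error of order $\eps/(1-\rho)$ which is subsumed by the $\log(n)\,\gamma_{\ltwo(\pi)}(\H,n)$ term of the theorem.

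For the uniform deviation I would split into bias and stochastic pieces. The bias $|\E_{x_0}[V_n(h)] - V_\infty(h)|$ decomposes into a spectral truncation tail $\sum_{|s|>b_n}|\rho^{(h)}_\pi(s)|$, a finite-sample correction of order $b_n/n$ coming from the $1/n$ normalisation in $\hat\rho_n$, and a burn-in contribution controlled via \ref{GE} in terms of $\pi(W)+W(x_0)$; with $b_n = 2\log(n)/\log(1/\rho)$ the truncation tail is $O(n^{-2})$ and the bias stays below the target rate. The centered deviation $V_n(h) - \E[V_n(h)]$ is a quadratic form in the path $(h(X_k))_{k=0}^{n-1}$ and is the main technical object. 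Under \ref{assumptionL}+\ref{assumptionW}, the kernelwise $\operatorname{T}_2(\TC)$ inequality combined with the $W_2$-contraction $W_2(P(x,\cdot),P(y,\cdot))\leq r\|x-y\|$ lifts, via a Djellout--Guillin--Wu type tensorisation, to a path-level $\operatorname{T}_2$ inequality with constant of order $\TC/(1-r)^2$; a careful estimate of the $\ell^2$-path Lipschitz norm of $V_n$, of order $b_n L^2/\sqrt n$, and the resulting Gaussian concentration then produce a sub-exponential tail for $V_n(h)$ governed exactly by $K^2=\sqrt{\TC}L^2/(1-r)$. Under \ref{assumptionB}+\ref{BR}, a regenerative decomposition at returns to the small set $\Cset$ splits the path into blocks whose lengths are geometrically integrable by \ref{BR}, and a Bernstein-type inequality for such block sums yields an analogous exponential tail in which $K^2$ is replaced by $\beta B^2$ and the factor $\beta$ is assembled from $\varsigma, \rho, J, \ls$ and $\log u$.

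A union bound over $\H_\eps$ adds $\log|\H_\eps|\leq n\eps^2$ to the deviation exponent; with the choice $\eps=\gamma_{\ltwo(\pi)}(\H,n)$, this balances into a uniform-in-$\H_\eps$ bound whose two pieces are exactly $b_n K^2\,\gamma_{\ltwo(\pi)}(\H,n)$ and $b_n \log(1/\delta)/\sqrt n$ multiplied by the $C_2$-factors; substituting $b_n\asymp\log(n)/\log(1/\rho)$ and combining with the approximation error yields the stated rate. The main technical obstacle is the concentration of the quadratic form $V_n(h)$ with the right $1/\sqrt n$ scaling (rather than $1/n$ that a naive Hanson--Wright style bound would give): both the $b_n/\sqrt n$ path-Lipschitz estimate of $V_n$ under \ref{assumptionL}+\ref{assumptionW} and its regenerative Bernstein counterpart under \ref{assumptionB}+\ref{BR} require Markov-chain specific arguments. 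Recovering the non-stationary initial-distribution dependence $\pi(W)+W(x_0)$ on top of a bound under $\P_\pi$ additionally needs a drift/geometric-ergodicity transfer step, and is where the $\varsigma^{1/2}(\pi(W)+W(x_0))/(1-\rho)^{1/2}$ prefactor of $C_2$ is born.
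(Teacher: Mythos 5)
Your proposal is correct in outline and shares the skeleton of the paper's proof: control the bias $\bigl|\ESV{h}-\AV{h}\bigr|$ via \ref{GE} and the truncation $b_n\asymp\log n/\log(1/\rho)$ (this is \Cref{prop:bias}), establish concentration of the quadratic form $\SV{h}=Z_n(h)^{\T}A_nZ_n(h)$ with $\|A_n\|\le 2b_n/n$, and take a union bound over $\H_\eps$ using $\log|\H_\eps|\le n\eps^2$ at $\eps=\gamma_{\ltwo(\pi)}(\H,n)$; in the Lipschitz case your route (tensorised $\operatorname{T}_2$ \`a la Djellout--Guillin--Wu, Gaussian concentration for Lipschitz path functionals, then a Hanson--Wright-type bound with deviation scale $Kb_n/\sqrt n$) is exactly the paper's. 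Two points genuinely differ. First, you anchor the decomposition at the minimiser of $\AV{\cdot}$ and charge the approximation error to $\AV{\sh_\eps}-\AV{\sh}$, bounded through the autocovariance series; the paper anchors at the minimiser of $\ESV{\cdot}$ and instead bounds the empirical difference $\SV{\sh_\eps}-\SV{\sh}$ via the quadratic-form representation, Cauchy--Schwarz and \Cref{lem:evar}. Both yield an $O(b_n\eps)$ contribution; yours is marginally cleaner but requires the extra (easy) Lipschitz estimate $\bigl|\AV{h}-\AV{g}\bigr|\lesssim\|h-g\|_{\ltwo(\pi)}$ under \ref{GE}, using the $\VnormFunc{\cdot}{\driftfunc^{1/2}}$ norms as in the lemma inside \Cref{prop:bias}. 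Second, and more substantively, under \ref{assumptionB}+\ref{BR} you propose a regenerative split at returns to $\Cset$ plus a Bernstein inequality for block sums, whereas the paper verifies a bounded-differences property of $\SV{h}$ (each coordinate perturbation changes it by at most $12b_nB^2/n$) and invokes a McDiarmid-type inequality for geometrically ergodic chains, which is precisely where $\beta$ and the restriction $x_0\in\Cset$ originate. Your regenerative route is the one step that would need real work: $\SV{h}$ is a quadratic form, not a sum of one-block functionals, so cross-block terms do not vanish and a plain Bernstein inequality for regenerative sums does not directly apply; the bounded-differences argument sidesteps this entirely, and I would recommend it.
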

\par
In view of Proposition~\ref{th:main_ex},  \Cref{th:main_slow} may be summarized by saying that the excess variance $\AV{\hh_\eps} - \inf_{h\in\H}\AV{h}$ is bounded with high probability by a multiple of  $n^{-1/2+\eta}$
for some $\eta>0$ depending on the capacity of the class $\H$. In  statistical literature,
such rates are referred to as slow rates of convergence.
These rates can be improved by imposing additional conditions on $\H$.
To this end let consider the case when $\H$ contains a constant function.
Since \(\pi(h)=\pi(f)\) for all \(h\in \H,\) this constant must be equal to \(\pi(f)\),
and hence $\inf_{h \in \H} \SV{h}= 0$.
In this case, we obtain tighter bounds.
\begin{theorem}\label{th:main_fast}
Assume \ref{GE}, \ref{assumptionL}, and \ref{assumptionW}. Assume also that $\H$ contains a constant function $h^*(x) \equiv const$.
Fix the size of the lag window  \(b_n=2(\log(\fraca{1}{\rho}))^{-1}\log(n)\) and take $\eps = \gamma_{\ltwo(\pi)}(\mathcal{H},n)$.
Then for any \(\delta \in (0,1),\) there is \(n_0=n_0(\delta)>0\) such that for all $n \ge n_0$ and $x_0 \in \Xset$ it holds with $\P_{x_0}-$probability at least $1-\delta,$ 
	\begin{align}\label{eq:th1}
	\AV{\hh_\eps}
	\lesssim
	 C_1 \log(n) \gamma^2_{\ltwo(\pi)}(\mathcal{H},n)
	+C_2 \frac{\log(n)\log(\fraca{1}{\delta})}{n},
	\end{align}	
	where
	\begin{equation*}
		C_1 = \frac{\TC L^2}{(1-r)^2\log(\fraca{1}{\rho})}
		\quad\text{and}\quad
		C_2 =
		\frac{\TC L^2}{(1-r)^2\log(\fraca{1}{\rho})}+\frac{\varsigma (\pi(\driftfunc) + \driftfunc(x_0))}{(1-\rho)^{1/2}\log(\fraca{1}{\rho})}\,
		\sup_{h\in\H}\VnormFunc[2]{h}{\driftfunc^{1/2}}.
	\end{equation*}
\end{theorem}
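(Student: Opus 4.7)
The key observation driving the fast rate is that $h^*\in\H$ is constant; since necessarily $h^*=\pi(f)$, both spectral variances vanish at $h^*$: $\AV{h^*}=0$ and $\SV{h^*}=0$ (the latter because each sample autocovariance $\ecovcoeff[n]{h^*}{s}$ is identically zero). Fix $\tilde h^*\in\H_\eps$ with $\|\tilde h^*-h^*\|_{\ltwo(\pi)}\le\eps=\gamma_{\ltwo(\pi)}(\H,n)$. The minimizing property of $\hh_\eps$ gives $\SV{\hh_\eps}\le\SV{\tilde h^*}$, which combined with the elementary decomposition
$$
\AV{\hh_\eps} \le \AV{\tilde h^*} + \bigl[\SV{\tilde h^*}-\AV{\tilde h^*}\bigr] + \bigl[\AV{\hh_\eps}-\SV{\hh_\eps}\bigr]
$$
reduces the proof to bounding (i) the approximation term $\AV{\tilde h^*}$ and (ii) the spectral-variance deviation uniformly over $\H_\eps$.

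For (i), set $g:=\tilde h^*-h^*$, so that $\pi(g)=0$, $\|g\|_{\ltwo(\pi)}\le\eps$, $g$ is $L$-Lipschitz, and $\AV{\tilde h^*}=\AV{g}$. Split the autocovariance series $\AV{g}=\|g\|_{\ltwo(\pi)}^2+2\sum_{k\ge 1}\langle g,P^k g\rangle_\pi$ at $k=b_n=2\log(n)/\log(\fraca{1}{\rho})$. For $k\le b_n$ use that $P$ is an $\ltwo(\pi)$-nonexpansion (Jensen), giving $\|P^k g\|_{\ltwo(\pi)}\le\eps$; for $k>b_n$ invoke \ref{assumptionW}: Wasserstein contraction forces $P^k g$ to be $(Lr^k)$-Lipschitz with mean zero, which under $\operatorname{T}_2(\TC)$ yields $\|P^k g\|_{\ltwo(\pi)}\lesssim Lr^k\sqrt{\TC}$. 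Summing, the head contributes $O(b_n\eps^2)$ while the tail contributes $O(L\sqrt{\TC}\eps r^{b_n}/(1-r))=O(L\sqrt{\TC}\eps/n^2)$ (negligible compared to $\gamma^2$ at the fixed-point rate), leaving $\AV{\tilde h^*}\lesssim C_1\log(n)\gamma^2$.

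For (ii), the core technical step is a Bernstein-type, variance-adaptive concentration inequality for the spectral variance estimator of the form
$$
|\SV{h}-\AV{h}| \lesssim b_n\sqrt{\AV{h}\log(\fraca{1}{\delta})/n} + b_n\log(\fraca{1}{\delta})/n,
$$
holding uniformly over $h\in\H_\eps$ with $\P_{x_0}$-probability $1-\delta$. The union-bound penalty is $\log|\H_\eps|\le n\eps^2=n\gamma^2$, which can be absorbed into the $\log(\fraca{1}{\delta})$-term up to constants because the choice $\eps=\gamma_{\ltwo(\pi)}(\H,n)$ matches the complexity scale. Inserting this into the decomposition above produces a self-referential quadratic inequality in $\sqrt{\AV{\hh_\eps}}$; resolving it by the standard localization argument and using step (i) yields $\AV{\hh_\eps}\lesssim C_1\log(n)\gamma^2+C_2 b_n\log(\fraca{1}{\delta})/n$, which is the claimed bound. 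The main difficulty lies in establishing this Bernstein-type inequality: since $\SV{h}$ is a quadratic form in the chain values, a naive Lipschitz-concentration argument only delivers Hoeffding-type $1/\sqrt{n}$ deviations (as in \Cref{th:main_slow}). Achieving the fast $1/n$ term requires exploiting the quadratic structure via the transport inequality of \ref{assumptionW}, together with a careful bias-variance splitting around the truncation lag $b_n$ driven by the geometric ergodicity \ref{GE}, and treating separately the diagonal $k=0$ term (pure fourth-moment control via $T_2$) and the off-diagonal lag terms where the mixing decay is exploited.
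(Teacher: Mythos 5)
Your strategy is sound and, at the level of its skeleton, matches the paper's: both proofs rest on the basic inequality $\SV{\hh_\eps}\le \SV{\tilde h^*}$ for a net point $\tilde h^*$ near the constant $h^*$, an approximation bound of order $b_n\eps^2$ for that net point, and a variance-adaptive (Bernstein-type) concentration bound for $\SV{h}$ uniform over $\H_\eps$, with the union-bound penalty $n\eps^2$ supplying the leading $\log(n)\gamma^2_{\ltwo(\pi)}(\mathcal{H},n)$ term. The differences are in execution. The paper first replaces $\AV{h}$ by $\ESV{h}=\PE[x_0]{\SV{h}}$ via the bias bound of \Cref{prop:bias} (with $b_n=2(\log(\fraca{1}{\rho}))^{-1}\log(n)$ the bias is $O(\log(n)/n)$) and then linearizes the Bernstein bound with the multiplier trick $\ESV{h}-(1+c)\SV{h}$, whereas you keep $\AV{h}$ and resolve a self-referential quadratic inequality; these are equivalent, and your route still needs \Cref{prop:bias} implicitly to pass between $\ESV{}$ (around which the concentration is centred) and $\AV{}$. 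For the approximation term the paper bounds $\ESV{\sh_\eps}$ through the quadratic-form representation, using that $A_n$ annihilates constant vectors together with \Cref{lem:evar}, while you bound $\AV{\tilde h^*-h^*}$ by splitting the autocovariance series at $b_n$; your tail estimate should be justified either by \eqref{eq:bound-stationary-covariance} (which follows from \ref{GE} alone) or by the extra observation that the invariant measure $\pi$ inherits $\operatorname{T}_2$ from \ref{assumptionW} — assumption \ref{assumptionW} as stated only concerns $P(x,\cdot)$, not $\pi$.

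Two imprecisions should be repaired. First, the concentration inequality you postulate places $b_n$ outside the square root; the correct form, \eqref{eq:conc_var_fast} of \Cref{th:conc_var}, inverts to $|\SV{h}-\ESV{h}|\lesssim \sqrt{K^2 b_n \ESV{h}\,u/n}+K^2 b_n u/n$ with $u=n\eps^2+\log(\fraca{1}{\delta})$, i.e.\ only one power of $b_n$ under the root; taken literally, your version costs an extra factor $\log(n)$ in both terms of the final bound. Second, the proof of that inequality is the real technical content and does not proceed by a diagonal/off-diagonal splitting of the lags: the paper tensorizes $\operatorname{T}_2(\TC)$ along the trajectory (\Cref{T2CostGuillin}), deduces Gaussian concentration for Lipschitz functionals of the whole path, and applies Adamczak's Hanson--Wright-type inequality to $Z_n(h)^{\T}A_nZ_n(h)$, the self-normalization coming from $\PE[x_0]{\|A_nZ_n(h)\|^2}\le\|A_n\|\,\ESV{h}$. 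Your sketch correctly identifies why a plain Lipschitz-concentration argument cannot give the fast rate, but the stated mechanism for the Bernstein bound would need to be replaced by (or reduced to) this argument.
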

\par
In view of Proposition~\ref{th:main_ex}, \Cref{th:main_fast} asserts that under an additional assumption that $\H$ contains a constant function,
the excess variance $\AV{\hh_\eps} - \inf_{h\in\H}\AV{h}$ can be bounded by a multiple of  $n^{-1+\eta}$
for some $\eta>0$ depending on  $\H$.
\par
\section{Application to Markov Chain Monte Carlo}
\label{sec:mcmc}
In this section we consider the application of the ESVM  approach to MCMC-type algorithms. The main goal of MCMC algorithms is to estimate expectations with respect to a probability measure $\pi$ on $\rset^d$, $d \geq 1$, with a density \(\pi\) of the form $\pi(x) =
 \rme^{-U(x)}/\int_{\rset^d} \rme^{-U(y)} \rmd y$ with respect to the Lebesgue
 measure, where \(U\) is a nonnegative potential.  Let $x^*$ be such that $\nabla U(x^*) = 0$ and without loss of generality we assume $x^* = 0$. Consider the following conditions on the potential \(U\).
\begin{description}
\item[(LD1)\label{H1}]
The function \(U\) is continuously differentiable on \(\R^d\) with Lipschitz continuous  gradient: there exists $\lipU > 0$ such that for all $x, y \in \rset^d$,
\[
\|\nabla U(x) - \nabla U(y)\| \leq \lipU \|x-y\| \,.
\]
\end{description}
\begin{description}
\item[(LD2)\label{H2}]
\(U\) is strongly convex:   there exists a constant ${m_U} > 0$, such that for all \( x,y \in \R^d\) it holds that
\[
U(y) \geq U(x) + \ps{\nabla U(x)}{y-x} + \mU\|x-y\|^2/2 \,.
\]
\end{description}
\begin{description}
\item[(LD3)\label{H3}]
There exist $K_1 \geq 0$ and $\tmU >0$ such that {for any $x \in \R^d$ with $\|x\|>K_1$}
   and any $y \in \rset^d$, $\ps{\DD^2 U(x) y }{y} \geq \tmU \norm[2]{y}$. Moreover, there exists $\MU \geq 0$ such that for any $x \in \rset^d$, $    \norm{\DD^3 U(x)} \leq \MU $.
  \end{description}
\paragraph{Unadjusted Langevin Algorithm}
The Langevin stochastic differential equation
 associated with $\pi$ is defined by
\begin{equation}
\label{eq:langevin_2}
\rmd Y_t = -\nabla U (Y_t) \rmd t + \sqrt{2} \rmd B_t \eqsp,
\end{equation}
where $(B_t)_{t\geq0}$ is the standard $d$-dimensional Brownian motion. Under mild technical
conditions, the Langevin diffusion admits $\pi$ as its unique
invariant distribution. We consider the sampling method based on the
Euler-Maruyama discretization of \eqref{eq:langevin_2}. This scheme referred to as {unadjusted} Langevin algorithm (ULA), defines the  discrete-time Markov chain
$(X_k)_{k \geq 0}$ given by
\begin{equation}
\label{eq:euler-proposal-2}
X_{k+1}= X_k - \gamma \nabla U(X_k) + \sqrt{2 \gamma} Z_{k+1} \eqsp,
\end{equation}
where $(Z_k)_{k \geq 1}$ is an \iid\ sequence of $d$-dimensional standard Gaussian
random variables and $\gamma>0$ is a  step size; see  \citet{roberts:tweedie:1996}.
We denote by $P^{\ULA}_\gamma$ the Markov kernel associated to the chain \eqref{eq:euler-proposal-2}.
It is known that under \ref{H1} and \ref{H2} or \ref{H3}, $P^{\ULA}_\gamma$ has a stationary distribution $\pi_\gamma$ which is close to $\pi$ (in a sense that one can bound the distance between $\pi_\gamma$ and $\pi$, e.g.,
in total variation and Wasserstein distances, see \citet{ad2014}, \citet{dm2017}).
\begin{proposition}
\label{thm: ULA-GE}
\begin{enumerate}
\item Assume \ref{H1}, \ref{H2}. Then for any $0< \gamma  <2/(\mU+\lipU)$, $P^{\ULA}_\gamma$ satisfies \ref{GE} {with the invariant distribution $\pi_\gamma$} and $\driftfunc(x) = \| x\|^2$. Moreover, $P^{\ULA}_\gamma$ fulfils \ref{assumptionW} with
\[
	\TC = 2 \gamma \quad \text{and}\quad r = \sqrt{1 - \gamma \kU},
\] where \(\kU \eqdef 2\mU \lipU/(\mU+\lipU)$.
\item Assume \ref{H1}, \ref{H3}.
 Then for any $0< \gamma  <\tmU/(4\lipU^2)$, $P^{\ULA}_\gamma$ satisfies \ref{GE}, \ref{BR} {with the invariant distribution $\pi_\gamma$},
 $\driftfunc(x) = \|x \|^2$, and $\Cset = \set{x\in\R^d}{\|x\| \leq R}$ with sufficiently large radius $R>0$.
\end{enumerate}
 \end{proposition}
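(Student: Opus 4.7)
The plan for part (i) is to handle the three conclusions (geometric ergodicity, transportation-cost inequality, $W_2$-contraction) separately, exploiting the explicit structure of the ULA kernel $P^{\ULA}_\gamma(x,\cdot)=\Norm(x-\gamma\nabla U(x),2\gamma\I)$. For the $W_2$-contraction I would use a synchronous coupling: run two chains from $x$ and $y$ with the same Gaussian noise $Z$, so that
\[
\|X_1-Y_1\|^2=\|(x-y)-\gamma(\nabla U(x)-\nabla U(y))\|^2.
\]
Expanding and applying the Nesterov co-coercivity identity for $\mU$-strongly convex, $\lipU$-smooth functions (which yields $\ps{\nabla U(x)-\nabla U(y)}{x-y}\ge \tfrac{\mU\lipU}{\mU+\lipU}\|x-y\|^2+\tfrac{1}{\mU+\lipU}\|\nabla U(x)-\nabla U(y)\|^2$) shows that for $\gamma\le 2/(\mU+\lipU)$ the residual $\|\nabla U(x)-\nabla U(y)\|^2$ term has a non-positive coefficient, giving $\|X_1-Y_1\|^2\le (1-\gamma\kU)\|x-y\|^2$; taking expectations and infima over couplings yields $r=\sqrt{1-\gamma\kU}$. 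The $T_2(2\gamma)$ part of \ref{assumptionW} is immediate from Talagrand's Gaussian inequality applied to $\Norm(x-\gamma\nabla U(x),2\gamma\I)$, whose covariance has operator norm $2\gamma$.

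For \ref{GE} in part (i) I would build the drift function $\driftfunc(x)=1+\|x\|^2$ and compute
\[
P^{\ULA}_\gamma\driftfunc(x)=1+\|x-\gamma\nabla U(x)\|^2+2\gamma d.
\]
Using the same co-coercivity inequality with $y=x^*=0$ (so $\nabla U(0)=0$) gives $\|x-\gamma\nabla U(x)\|^2\le(1-\gamma\kU)\|x\|^2$ for $\gamma\le 2/(\mU+\lipU)$, hence the geometric drift $P^{\ULA}_\gamma\driftfunc\le(1-\gamma\kU)\driftfunc+b$ with $b=\gamma\kU+2\gamma d$. Since the Gaussian transition density is positive and continuous, every sublevel set $\{\driftfunc\le\ls\}$ is a small set (a uniform minorization on any compact is obtained by bounding the Gaussian density from below), which together with the drift yields \ref{DS}; by \Cref{DS_remark} this delivers \ref{GE} with the invariant distribution $\pi_\gamma$ and exponents $(\varsigma,\rho)$ depending only on $\kU,\gamma,d$.

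For part (ii) the main obstacle is the absence of global strong convexity: the drift has to be established using only the Hessian lower bound for $\|x\|>K_1$ and the third-derivative bound $\MU$. Writing $\nabla U(x)=\int_0^1\nabla^2 U(tx)x\,\rmd t$ and splitting the integral at $t=K_1/\|x\|$, the strongly-convex-at-infinity assumption \ref{H3} gives $\ps{x}{\nabla U(x)}\ge \tmU\|x\|^2-C(K_1,\tmU,\lipU)\|x\|$ for $\|x\|\ge K_1$, which plugs into
\[
\|x-\gamma\nabla U(x)\|^2\le(1-2\gamma\tmU+\gamma^2\lipU^2)\|x\|^2+C'\gamma\|x\|.
\]
Choosing $\gamma<\tmU/(4\lipU^2)$ makes $1-2\gamma\tmU+\gamma^2\lipU^2<1-\tfrac74\gamma\tmU$, so adding $2\gamma d$ and completing the square yields $P^{\ULA}_\gamma\driftfunc(x)\le\lambda\driftfunc(x)+b\indibr{\|x\|\le R}$ for a suitable $R$, with $\lambda+2b/(1+\ls)<1$ upon picking $\ls$ large enough. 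The same Gaussian-density argument shows $\Cset=\{\|x\|\le R\}$ is small, so \ref{DS} holds and, by \Cref{DS_remark}, both \ref{GE} and \ref{BR} follow, with $\Cset$ as claimed and $\driftfunc(x)=\|x\|^2$ (equivalent up to an additive constant to $1+\|x\|^2$). The delicate point throughout is that every constant ($\lambda$, $b$, $\ls$, $R$) must be tracked explicitly in terms of $\mU,\lipU,\tmU,K_1,\MU,d,\gamma$ to feed the non-asymptotic rates of \Cref{th:main_slow} and \Cref{th:main_fast}; I would carry these out by following the roadmap of \citet{ad2014,dm2017} and \citet[Ch.~19]{douc:moulines:priouret:2018}.
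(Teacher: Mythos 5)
Your proposal is correct and follows essentially the same route as the paper: the paper's proof simply delegates to \citet[Propositions~2 and 3]{durmus:moulines:2018}, \citet[Theorem~12]{dm2017}, \citet[Theorem~9.2.1]{GentilBakryLedoux} and \citet[Lemma~19, Propositions~16]{durmus:moulines:2019}, and the arguments you spell out (synchronous coupling plus co-coercivity for the $W_2$-contraction, Talagrand's inequality for the Gaussian transition kernel, and a drift-plus-minorization verification of \ref{DS} feeding into \Cref{DS_remark}) are precisely the contents of those citations. Your computations for the contraction factor $r=\sqrt{1-\gamma\kU}$ and for the drift under \ref{H3} check out, so the only difference is that you carry out explicitly what the paper outsources to references.
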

\begin{proof}
\begin{enumerate}
\item For the proof of \ref{GE} see \citet[Proposition~2]{durmus:moulines:2018} and \citet[Theorem~12]{dm2017} and remark~\ref{DS_remark}.
To prove \ref{assumptionW} we observe that \(P^{\ULA}_\gamma(x,\cdot) = \Norm(x-\gamma \nabla U(x), 2\gamma \Id_d)\).
Hence, for all $\gamma > 0$, we get using \citet[Theorem~9.2.1]{GentilBakryLedoux}, $P^{\ULA}_\gamma(x,\cdot) \in \operatorname{T}_2(2\gamma)$,
that is \(P^{\ULA}_\gamma(x,\cdot)\) fulfils  \eqref{eq:transportation-cost-inequality}. Assuming that \ref{H1} and \ref{H2} hold, we may show using \citet[Proposition 3]{durmus:moulines:2018} that for any \(0 < \gamma \leq 2/(\mU+\lipU))\) and any \(x,y \in \Xset\), $W_2(P^{\ULA}_\gamma(x,\cdot),P^{\ULA}_\gamma(y,\cdot)) \leq \sqrt{1-\gamma \kU} \, d(x,y)$.
\par
\item See~\citet[Lemma 19 and Proposition 16]{durmus:moulines:2019}.
\end{enumerate}
\end{proof}

\paragraph{Metropolis Adjusted Langevin Algorithm (MALA)}
Here we consider a popular modification of ULA  called Metropolis Adjusted Langevin Algorithm (MALA). At each iteration, a new candidate $Y_{k+1}$ is proposed according to
\begin{equation}
\label{eq:ula-proposal}
Y_{k+1}= X_k - \gamma \nabla U(X_k) + \sqrt{2 \gamma} Z_{k+1} \eqsp,
\end{equation}
where $(Z_k)_{k \geq 1}$ is an \iid\ sequence of $d$-dimensional standard Gaussian
random vectors and $\gamma>0$ is a  step size. 
This proposal is accepted with probability $\alpha(X_k, Y_{k+1})$, where
\[
\alpha(x, y) \eqdef \min \Biggl (1, \frac{\pi(y) q_\gamma(y, x)}{\pi(x) q_\gamma(x, y)} \Biggr),
\]
where $q_\gamma(x,y) = (4\pi \gamma)^{-d/2} \exp(-\|y - x + \gamma \nabla U(x)\|^2/(4\gamma))$.
We denote by $P^{\MALA}_\gamma$ the Markov kernel associated to the MALA chain.
\begin{proposition}
\label{thm: MALA-GE}
Assume \ref{H1}, \ref{H3}.
Then there exists $\overline \gamma>0$ such that for any $\gamma \in [0, \overline \gamma]$, $P^{\MALA}_\gamma$ satisfies \ref{GE}, \ref{BR} 
{with the invariant distribution $\pi$}, \(\driftfunc(x) = \|x\|^2$, and  $\Cset = \set{x\in\R^d}{\|x\| \leq R}$ with sufficiently large radius $R>0$.
\end{proposition}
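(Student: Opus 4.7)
The plan is to verify condition \ref{DS} from \Cref{DS_remark}, which by that remark implies both \ref{GE} and \ref{BR}. That $\pi$ is the invariant distribution of $P^{\MALA}_\gamma$ is a direct consequence of the Metropolis--Hastings detailed balance identity built into the algorithm, so there is nothing to argue there.

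For the drift inequality with $W(x)=\|x\|^2$, I would start from the identity
\begin{equation*}
P^{\MALA}_\gamma W(x) - W(x) = \int \alpha(x,y)\, q_\gamma(x,y)\,(\|y\|^2-\|x\|^2)\,\rmd y,
\end{equation*}
since only accepted proposals change $W$, and split the integration into a ``typical'' region $\{\|y-x\|\leq R_0\sqrt{\gamma}(1+\|x\|)\}$ and its complement. On the typical region, a third-order Taylor expansion of the Metropolis log-ratio $\log\alpha(x,y)$, using the global bound $\|\DD^3 U\|\leq \MU$ from \ref{H3}, yields $\alpha(x,y)\geq 1 - C\sqrt{\gamma}(1+\|x\|)^3$; combined with the strong-convexity bound $\langle \nabla U(x),x\rangle\geq \tmU\|x\|^2 - c$ obtained by integrating the Hessian inequality in \ref{H3} for $\|x\| > K_1$, this gives
\begin{equation*}
\int_{\mathrm{typical}} \alpha(x,y)\, q_\gamma(x,y)(\|y\|^2-\|x\|^2)\,\rmd y \leq -2\gamma\tmU\|x\|^2 + C'\gamma
\end{equation*}
uniformly on $\{\|x\|>R\}$. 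On the complementary ``atypical'' region, Gaussian concentration of the innovation $Z$ and the crude bound $\alpha\leq 1$ leave a remainder of order $o(\gamma)$. Choosing first $R$ large and then $\overline\gamma$ sufficiently small produces $P^{\MALA}_\gamma W(x)\leq \lambda W(x) + b\,\mathbf{1}_{\{W(x)\leq\ls\}}$ with the constants satisfying $\lambda + 2b/(1+\ls)<1$, which is the drift condition in \ref{DS}.

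The minorisation on $\{W\leq\ls\}=\{\|x\|\leq R\}$ follows from a standard compactness argument: the Gaussian proposal density $q_\gamma(x,\cdot)$ is jointly continuous in $x$ and strictly positive on $\{\|x\|\leq R\}\times B$ for any ball $B$, and so is the acceptance probability $\alpha(x,\cdot)$, so they are uniformly bounded below by a common positive function, yielding $\|\delta_x P^{\MALA}_\gamma - \delta_{x'} P^{\MALA}_\gamma\|_{\operatorname{TV}} \leq 2(1-\varepsilon)$. The main obstacle is the \emph{uniform} control of $\alpha(x,y)$ at infinity: in contrast with the ULA case of \Cref{thm: ULA-GE}, where no acceptance step is needed, large values of $\|\nabla U(x)\|$ can make the Langevin proposal jump far and cause the Metropolis ratio to deteriorate on rare events. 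The global third-derivative bound in \ref{H3} is exactly what keeps the Taylor remainder controlled up to polynomial factors in $\|x\|$ that the drift from strong convexity can absorb; this is also what ultimately fixes the upper bound $\overline\gamma$ on the step size.
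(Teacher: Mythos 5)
You should first know that the paper does not prove this proposition at all: its ``proof'' is the single line ``See \citet[Propositions 21 and 23]{durmus:moulines:2019}'', so any self-contained argument is necessarily a different route. Your overall strategy (verify \ref{DS} and invoke \Cref{DS_remark}, detailed balance for invariance, compactness for the minorisation on $\{\|x\|\le R\}$) is the right skeleton, and the starting identity $P^{\MALA}_\gamma \driftfunc(x)-\driftfunc(x)=\int \alpha(x,y)q_\gamma(x,y)(\|y\|^2-\|x\|^2)\,\rmd y$ is correct.

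The gap is in the drift step, and it is exactly at the point you flag as ``the main obstacle'' and then dismiss. Your claimed bound $\alpha(x,y)\ge 1-C\sqrt{\gamma}(1+\|x\|)^3$ cannot be absorbed by the drift: the rejection defect enters $P\driftfunc-\driftfunc$ through a term of the form $\E\bigl[(1-\alpha)\,|\driftfunc(Y)-\driftfunc(x)|\bigr]$, and on your ``typical'' region $|\driftfunc(Y)-\driftfunc(x)|\asymp \gamma\|x\|^2+\sqrt{\gamma}\,\|x\|$, so the defect is of order $\gamma^{3/2}(1+\|x\|)^{5}$ (and a Cauchy--Schwarz variant only improves the exponent of $\gamma$, not of $\|x\|$). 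The available negative drift from $\langle\nabla U(x),x\rangle\ge \tmU\|x\|^2-c$ is only $-c\gamma\|x\|^2$, and since the inequality $P\driftfunc\le\lambda \driftfunc+b$ must hold for \emph{all} $\|x\|>R$ at a \emph{fixed} $\gamma$, no choice of $\overline\gamma$ makes $\gamma^{3/2}\|x\|^5\lesssim\gamma\|x\|^2$ uniformly. The same problem recurs one order down: a careful expansion of the log-acceptance ratio shows the leading $\gamma\|\nabla U\|^2$ terms cancel and leaves a positive deterministic term of order $\gamma^2\langle\nabla U,\DD^2U\,\nabla U\rangle$ against a third-derivative remainder of order $\MU\gamma^3\|\nabla U\|^3\asymp\gamma^3\lipU^3\|x\|^3$, so even the sign of $\log\alpha$ is unclear for $\|x\|\gg 1/\gamma$. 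The correct argument (this is the content of Roberts--Tweedie-type results and of the cited Propositions 21 and 23) does not rely on $\alpha$ being uniformly close to $1$; it requires showing that the \emph{inward-moving} proposals are asymptotically not rejected --- i.e.\ that the rejection region is eventually contained in $\{\|y\|\ge\|x\|\}$, where rejection only helps the drift --- which is a statement about the sign of the log-acceptance ratio on the inward half-space, not a polynomial lower bound on $\alpha$. As written, your drift estimate does not close, and this is precisely the step that distinguishes MALA from the ULA case of \Cref{thm: ULA-GE}.
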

\begin{proof}
See~\citet[Proposition 21 and 23]{durmus:moulines:2019}.
\end{proof}

\paragraph{Random Walk Metropolis (RWM)}
At each iteration, a new candidate $Y_{k+1}$ is proposed according to
\begin{equation}
\label{eq:rwm-proposal}
Y_{k+1} = X_k + \sqrt{\gamma} Z_{k+1}, 
\end{equation}
where $(Z_k)_{k \geq 1}$ is an \iid\ sequence of $d$-dimensional standard Gaussian
random vectors and $\gamma>0$. This proposal is accepted with probability $\alpha(X_k,Y_{k+1})$, where
\[
\alpha(x, y) = \min \bigl (\fraca{\pi(y)}{\pi(x)}, 1 \bigr)
\]
We denote by $P^{\RWM}_{\gamma}$ the Markov kernel associated to the RWM chain.
Assumption \ref{GE} is discussed in~\citet{RobTweed1996} and~\citet{JarHansen2000} under various conditions. In particular the following result for super-exponential densities holds.
\begin{proposition}
Assume \ref{H1}, \ref{H3}. Then $P^{\RWM}_{\gamma}$ satisfies \ref{GE}, \ref{BR} 
{with the invariant distribution $\pi$}, 
$\driftfunc(x) = c \pi^{-1/2}(x)$ for some $c > 0$, and $\Cset = \set{x\in\R^d}{\|x\| \leq R}$ with sufficiently large radius $R>0$.
\end{proposition}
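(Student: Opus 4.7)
The plan is to invoke the standard geometric ergodicity theory for Metropolis--Hastings chains with symmetric random-walk proposals, tailored to super-exponential target densities, then translate the output into the two conditions \ref{GE} and \ref{BR} via the drift/small-set framework of \ref{DS} and \Cref{DS_remark}.

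First, I would check the structural conditions. Under \ref{H1} the potential $U$ is $C^1$ and under \ref{H3} $U$ grows super-exponentially in the sense that $\langle \nabla U(x)/\|\nabla U(x)\|, x/\|x\|\rangle \to 1$ and $\|\nabla U(x)\|\to \infty$ as $\|x\|\to\infty$ (this follows from the assumption $\langle \DD^2 U(x)y,y\rangle \geq \tmU\|y\|^2$ for $\|x\|>K_1$ together with bounded third derivatives, by integrating along rays from the origin). The Gaussian proposal $q_\gamma(x,\cdot) = \mathcal{N}(x,\gamma I_d)$ is symmetric, strictly positive, and continuous; together with the positivity and continuity of $\pi$ this gives $\pi$-irreducibility (with Lebesgue measure as an irreducibility measure), aperiodicity, and reversibility of $P^{\RWM}_\gamma$ with respect to $\pi$.

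Next, the small-set property for $\Cset=\{\|x\|\leq R\}$ is immediate: for any $x\in \Cset$ and any Borel $A\subset \Cset$,
\[
P^{\RWM}_\gamma(x,A) \geq \int_A q_\gamma(x,y)\,\alpha(x,y)\,\rmd y \geq \bigl(\inf_{x,y\in \Cset} q_\gamma(x,y)\bigr)\bigl(\inf_{x,y\in \Cset} \alpha(x,y)\bigr)\,|A|,
\]
and both infima are strictly positive by continuity and positivity of $\pi$ on the compact set $\Cset$. This is uniform minorisation, hence $\Cset$ is a small set, and it forces $\sup_{\Cset}W = c\sup_{\Cset}\pi^{-1/2} < \infty$, yielding the bound $W\leq \ls$ in \eqref{eq: geom_int}.

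The central step is the geometric drift $P^{\RWM}_\gamma W \leq \lambda W + b\mathbf{1}_\Cset$ with $W = c\pi^{-1/2}$. Here I would quote the super-exponential drift lemma of \citet{JarHansen2000}: their Theorem~4.3 (or Theorem~4.1 together with Lemma~4.2) shows that for any symmetric random-walk Metropolis chain whose target $\pi$ is super-exponential with regular contours, the function $W = \pi^{-s}$ with $s\in(0,1/2]$ is a geometric drift function. Choosing $s=1/2$ and $c$ so that $W\geq 1$ on $\Xset$ gives the stated $W$. The ingredients match ours: \ref{H3} (plus \ref{H1}) provides both the super-exponential growth and the "contour regularity" condition, because the strong positive curvature outside a ball together with bounded third derivative prevents the level sets of $U$ from becoming pathologically elongated.

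Combining the small-set and drift conditions, condition \ref{DS} is verified (the balance $\lambda + 2b/(1+\ls)<1$ can be arranged by enlarging $R$, which shrinks $\lambda$ towards $0$ while keeping $b$ controlled). By \Cref{DS_remark}, \ref{DS} yields both \ref{GE} with invariant distribution $\pi$ and \ref{BR} with some $u>1$, $J>0$. The main obstacle in this program is the drift step: one must verify that the geometric-curvature hypothesis \ref{H3} does imply the "contour regularity" used by \citet{JarHansen2000}; I would handle this by showing that under \ref{H3} the unit outer normal to $\{U\leq t\}$ converges to $x/\|x\|$ uniformly as $t\to\infty$, which is a direct consequence of $\nabla U(x)/\|\nabla U(x)\|\to x/\|x\|$ obtained by integrating the lower bound on $\DD^2 U$.
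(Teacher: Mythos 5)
Your proposal follows essentially the same route as the paper, whose proof is a one-line citation of \citet[Theorem~4.2]{JarHansen2000} for super-exponential targets; you are simply unpacking the content behind that citation (minorisation on a compact ball, the drift function $W=c\pi^{-1/2}$, and the verification that \ref{H1} and \ref{H3} imply super-exponentiality and the contour condition). One small caution: your claim that $\nabla U(x)/\|\nabla U(x)\|\to x/\|x\|$ is stronger than what \ref{H3} guarantees and stronger than what Jarner--Hansen need --- the Lipschitz bound from \ref{H1} together with the curvature bound only gives $\liminf_{\|x\|\to\infty}\langle x/\|x\|,\nabla U(x)/\|\nabla U(x)\|\rangle \geq \tmU/\lipU>0$, but that weaker statement is exactly the contour condition required, so the argument goes through.
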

\begin{proof}
See~\citet[Theorem 4.2]{JarHansen2000}.
\end{proof}

\section{Numerical study}
\label{sec:num}
In this section we study numerical performance of the ESVM method for simulated and real-world data. Python implementation is available at \url{https://github.com/svsamsonov/esvm}.
\par
Following \citet{Assaraf1999}, \citet{mira2013zero},~\citet{oates2010stein}, we choose $\mathcal{G}$ to be a class of Stein control variates of the form
\begin{equation}
\label{eq:stein}
g_\Phi= - \ps{\Phi}{\nabla U} + {\divergence(\Phi)},
\end{equation}
where $\Phi: \Theta \to \rset^d\) with \(\Theta \subset \rset^d$, {$\divergence(\Phi)$ is the divergence of $\Phi$}, and $U$ is the potential associated with $\pi$, that is, $\pi(x) \propto \rme^{-U(x)}$, see~\Cref{sec:mcmc}.  Under \ref{H1} and \ref{H2}, for continuously differentiable functions $\Phi$, $\pi(g_\Phi)=0$, see \citet[Lemma 1]{oates2010stein}. This suggests to consider a class $\H= \set{h = f - g_\Phi}{g_\Phi \in \G}$. 
Our standard choice will be $\Phi(x) = b$ or  $\Phi(x)= A x + b$, where $A \in \R^{d \times d}$ is a matrix and $b \in \R^d$ is a vector. They will be referred to as the first- and second-order control variates respectively. It is worth noting that polynomial-based control variates are not exhaustive and one can use other control variates.
For instance, in the Gaussian mixture model considered below, polynomial-based control variates do not fit structure of the problem, so a class of radial basis functions will be used.
\par
In the ESVM method, we choose the trapezoidal non-negative kernel $w$ supported on $[-1,1]:$
\begin{equation}
\label{eq:kernel_coeffs}
w(s) = \begin{cases}
	2s+2, & -1 \leq s < -1/2, \\
	1, & -1/2 \leq s \leq 1/2,\\
	-2s + 2, &1/2 < s \leq 1. \\
\end{cases}
\end{equation}
{Our experiments with other kernels, for instance, $w(s) = \frac{1}{2} + \frac{1}{2}\cos{\pi s}$ did not reveal any sensitivity of ESVM to a particular kernel choice. In fact, even the simplest kernel $w(s) = \indibr{|s| \leq \frac{1}{2}}$ showed results comparable with ones for $w(s)$ given in \eqref{eq:kernel_coeffs}. Another parameter of ESVM to be chosen 
is the lag-window size $b_n$. In practice, it is not convenient to choose $b_n$ according to \Cref{th:main_slow} and \Cref{th:main_fast}, since it involves parameters of the Markov chain which are not usually available. Therefore, we choose $b_n$ by analyzing the sample autocorrelation function (ACF) of the Markov chain, see discussion below. Moreover, our experiments show that ESVM is not much sensitive to particular choice of $b_n$. For a wide range of possible values our procedure shows reasonably good performance.}  
\par
Numerical study is organized as follows. First we use ULA, MALA, or RWM algorithm to sample a training trajectory
of the size $n=n_{\text{burn}}+n_{\text{train}}$.
We consider the first $n_{\text{burn}}$ observations as a burn-in period, and exclude them from  subsequent computations. Then 
we compute optimal parameters $\hat{A}_{\ESVM}$, $\hat{b}_{\ESVM}$ which minimise the spectral variance $\SV{h}$ with $n=n_{\text{train}}$ 
and obtain the resulting control variate $\hat{h}_{\ESVM}$. For comparison purposes, we also compute parameters $\hat{A}_{\EVM}$, $\hat{b}_{\EVM}$ 
based on minimisation of the empirical variance {$V_n'(h) = (n-1)^{-1}\sum_{k=0}^{n-1} \{h(X_k) - \pi_n(h)\}^2$ with $n=n_{\text{train}}$}
and obtain the corresponding control variate $\hat{h}_{\EVM}$.
Variance reduction using $\hat{h}_{\EVM}$ will be referred to as the EVM algorithm, see \citet{belomestny2017variance}, \citet{mira2013zero}, and \citet{papamarkou2014}.
We use the BFGS optimisation method to find the optimal parameters for both ESVM and EVM algorithms.
\par
To evaluate performance of ESVM and EVM, we use the same MCMC algorithm to sample 
$N_{\text{test}} = 100$  independent training trajectories of size $n=n_{\text{burn}}+n_{\text{test}}$. Then for each trajectory we exclude first $n_{\text{burn}}$ observations and compute three different estimates for $\pi(f)$: (i) vanilla estimate (ergodic average of $f$ without variance reduction); (ii) EVM estimate (ergodic average of $\hat{h}_{\EVM}$); (iii) ESVM estimate (ergodic average of $\hat{h}_{\ESVM}$).
For each test trajectory, we define the Variance Reduction Factors (VRF) as the ratios {$\fraca{\SV{f}}{\SV{\hat{h}_{\ESVM}}}$ or $\fraca{\SV{f}}{\SV{\hat{h}_{\EVM}}}$ with $n=n_{\text{test}}$}. We report the average VRF over $N_{\text{test}}$ trajectories together with the corresponding boxplots of ergodic averages. On these boxplots we display the lower and upper quartiles for each estimation procedure. We will refer to the methods based on the first-order control variates as ESVM-1 and EVM-1, and for the second-order ones as ESVM-2 and EVM-2, respectively.
The values $b_n$, $n_{\text{burn}}$, $n_{\text{train}}$, $n_{\text{test}}$ together with parameters of MCMC algorithms for each example considered below are presented in \Cref{sec:Fig_Tables}, \Cref{tab:Table_setup}.

\paragraph{Gaussian Mixture Model (GMM)\label{sec:GMM}}
Let $\pi$ be a mixture of two Gaussian distributions, that is,
$\pi= \rho\Norm(\mu, \Sigma) + (1- \rho)\Norm(-\mu, \Sigma)$ for
\(\rho \in [0,1]\).
It is straightforward to check that \ref{H1} holds. If
\(\mu\) and \(\Sigma\) are such that \(\|\Sigma^{-1}\mu\|^2 \leq \lambda_{\text{min}}(\Sigma^{-1})\), the density \(\pi\) satisfies \ref{H2}. Otherwise, we have \ref{H3}.
\par
We set \(\rho = 1/2\), \(d = 2\), \(\mu = (0.5,0.5)^{\T}\),  and consider two instances of the covariance matrix: \(\Sigma = \Id\) and \(\Sigma = \Sigma_0\), where \(\Sigma_0\) is a randomly initialised symmetric matrix with \(\lambda_{\text{min}}(\Sigma_0)\geq 0.1\). The quantities of interest are $\E_{\pi}[X_1]$ and $\E_{\pi}[ X_1^2]$. 
\par
{
First let us briefly discuss how one can choose the lag-window size $b_n$.
Let us look at the sample ACF plot of the first coordinate given in \Cref{fig:gmm_truncation}. 
One may observe that ACF decreases fast enough for any MCMC algorithm, and it seems reasonable to set $b_n = 50$ or close to it. 
Moreover, we analyse performance of ESVM for different choices of $b_n$ by running the ULA algorithm to estimate $\E_{\pi}[X_1]$
and letting   $b_n$ to run  over the values from $1$ to $5000$. The corresponding VRFs are given also in \Cref{fig:gmm_truncation}.
Here, to compute the spectral variance over test trajectories, we use fixed $b_n^{\text{test}} = n^{1/3}_{\text{test}}$, no matter which value of $b_n$ was used during the training. Note that even for $b_n = 1$ on train (that is, taking into account only the first-order autocovariance) ESVM outperforms EVM, and for values $b_n \in [10, 1000]$ we observe  the optimal performance of ESVM.}
\par
Numerical results for estimating $\E_{\pi}[X_1]$ are presented in \Cref{tab:Table_GMM_d_2}. The corresponding boxplots for $\E_{\pi}[X_1]$ are given in \Cref{fig:gmm_1st},
and for $\E_{\pi}[ X_1^2]$ are given in \Cref{sec:Fig_Tables}, \Cref{fig:gmm_white_2nd} and \Cref{fig:gmm_coloured_2nd}. 
For the sake of convenience, all the estimates are centred by their analytically computed expectations. Note that ESVM outperforms EVM  in  both cases \(\Sigma = \Id\) and \(\Sigma = \Sigma_0\) and for all samplers used.

\begin{figure}[!htb]\centering
\begin{minipage}{0.88\linewidth}
\captionsetup{font=small}
\caption{GMM with \(\Sigma = \Sigma_0\). Left: Sample autocorrelation function for $X_1$. Right: average variance reduction factors for different choices of $b_n$.}\label{fig:gmm_truncation} 
  \includegraphics[width=0.49\linewidth]{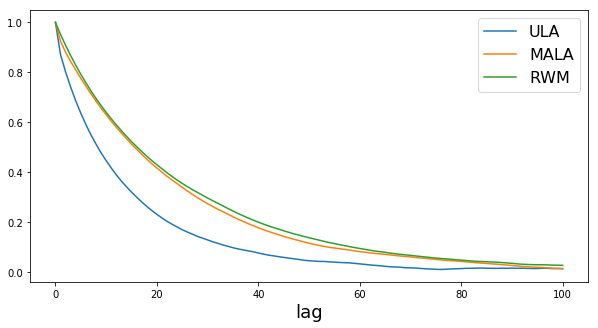}
  \includegraphics[width=0.49\linewidth]{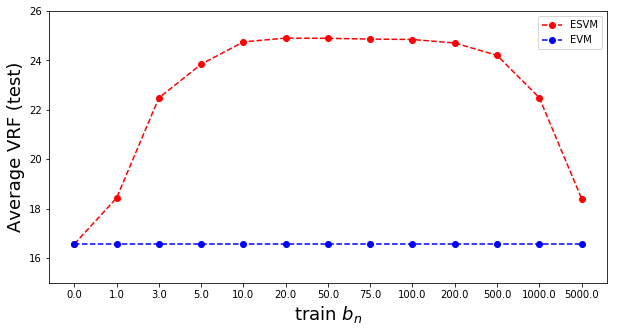}
\end{minipage}
\end{figure}

\begin{table}[!htb]\centering
\rat{1.1}
\captionsetup{font=small}
\caption{Variance Reduction Factors in GMM with \(\Sigma = \Id\) (top) and \(\Sigma = \Sigma_0\) (bottom).}\label{tab:Table_GMM_d_2}\vspace{-0.1cm}
\begin{minipage}{0.88\linewidth}\centering
\resizebox{0.7\textwidth}{!}{
\begin{tabular}{@{}lcccccccc@{}}
\toprule
 & \phantom{abc} &  \multicolumn{3}{c}{$\E_\pi[X_1]$} & \phantom{abc} & \multicolumn{3}{c}{$\E_\pi[X_1^2]$} \\
\cmidrule{3-5} \cmidrule{7-9} 
Method & \phantom{abc} & ULA  & MALA & RWM  &  & ULA  & MALA & RWM  \\
\toprule
ESVM &&  $\bf{9.1}$ & $\bf{6.1}$ & $\bf{8.2}$&&  $\bf{609.2}$ & $\bf{319.6}$ & $\bf{531.2}$\\
EVM && $4.5$ & $3.6$ & $5.3$ && $607.8$ & $316.3$ & $528.7$\\
\bottomrule
\end{tabular}}
 \vspace{0.8em} \\
\resizebox{0.7\textwidth}{!}{\centering
\begin{tabular}{@{}lcccccccc@{}}
\toprule
 & \phantom{abc} &  \multicolumn{3}{c}{$\E_\pi[X_1]$} & \phantom{abc} & \multicolumn{3}{c}{$\E_\pi[X_1^2]$} \\
\cmidrule{3-5} \cmidrule{7-9} 
Method & \phantom{abc} & ULA  & MALA & RWM  &  & ULA  & MALA & RWM  \\
\toprule
ESVM &&  $\bf{24.6}$ & $\bf{7.9}$ & $\bf{22.2}$ &&  $\bf{15.2}$ & $\bf{9.4}$ & $\bf{15.3}$ \\
EVM && $16.5$ & $7.5$ & $14.3$  &&  $9.2$ &  $5.0$ & $9.3$\\
\bottomrule
\end{tabular}}
\end{minipage}
\end{table}

%

\begin{figure}[!htb]\centering
\begin{minipage}{0.88\linewidth}
\captionsetup{font=small}
\caption{Estimation of $\E_\pi[X_1]$ in GMM with \(\Sigma = \Id\) (top row) and \(\Sigma = \Sigma_0\) (bottom row).
In each row  boxplots are given for ULA, MALA, and RWM, respectively.} \label{fig:gmm_1st}\vspace{-0.1cm}
  \includegraphics[width=0.328\linewidth]{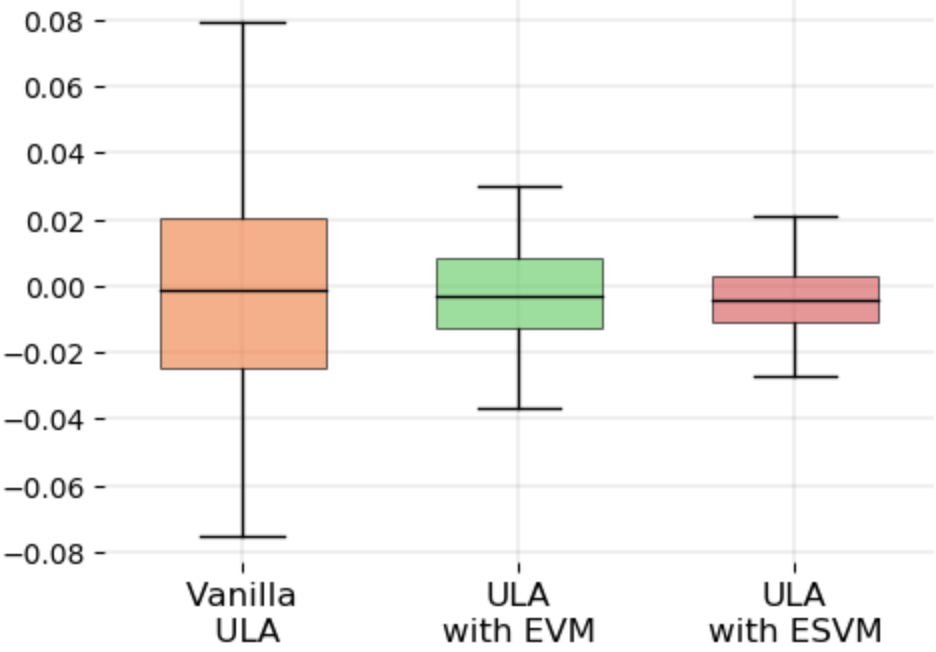}
  \includegraphics[width=0.328\linewidth]{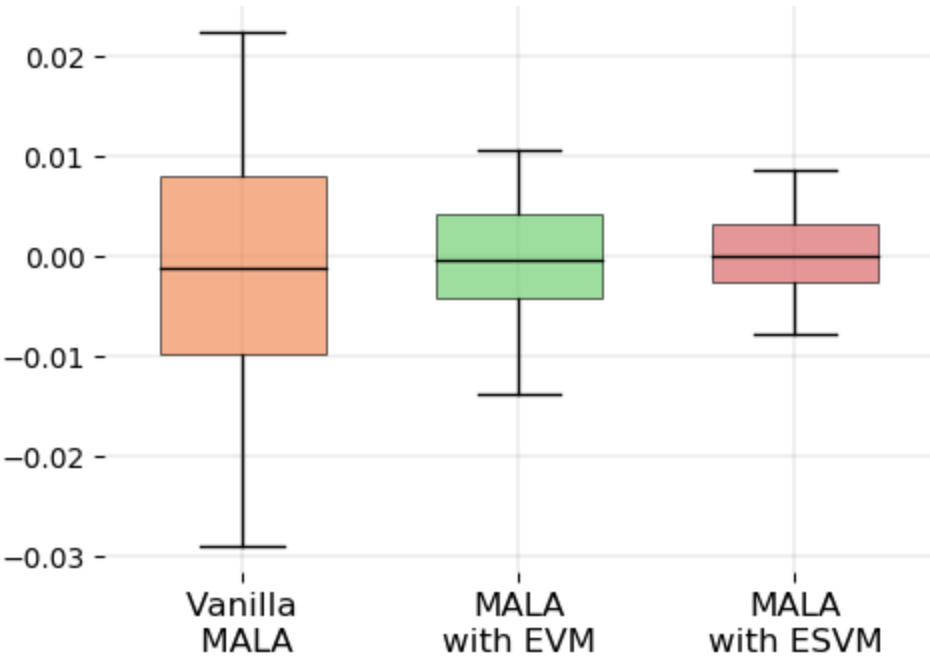}
  \includegraphics[width=0.328\linewidth]{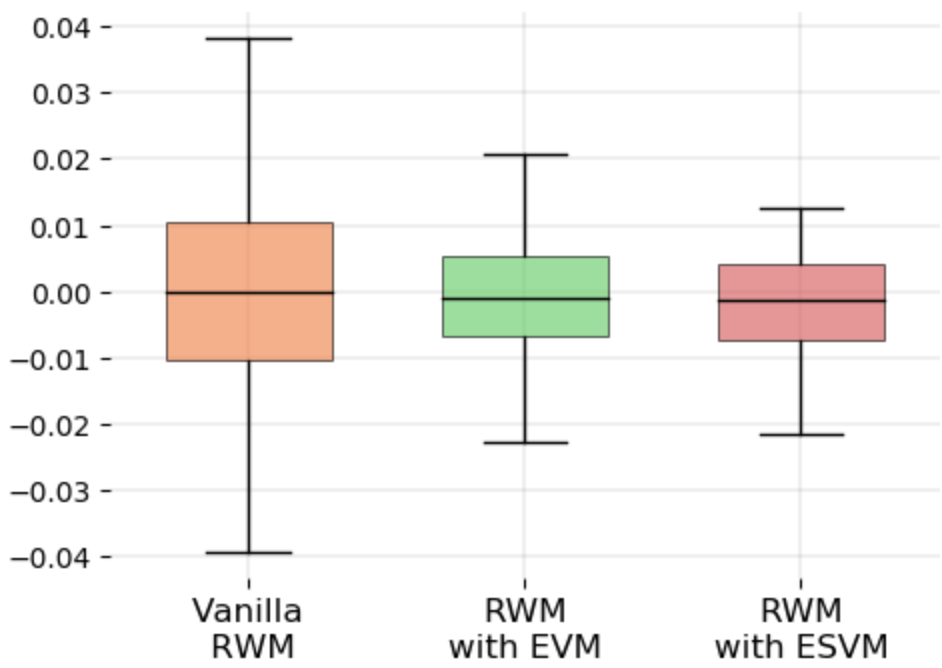}
  
  \includegraphics[width=0.328\linewidth]{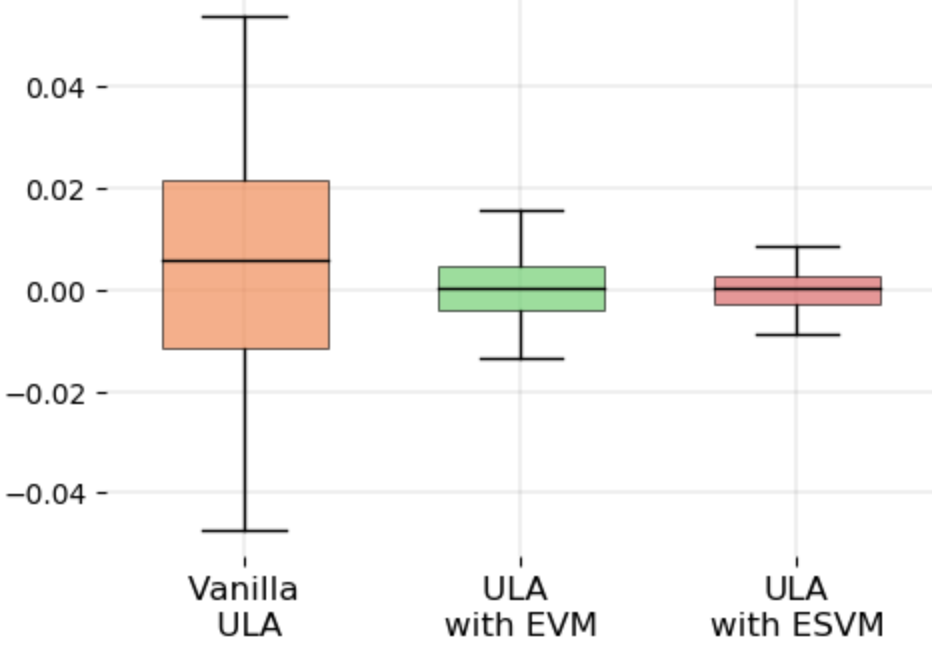}
  \includegraphics[width=0.328\linewidth]{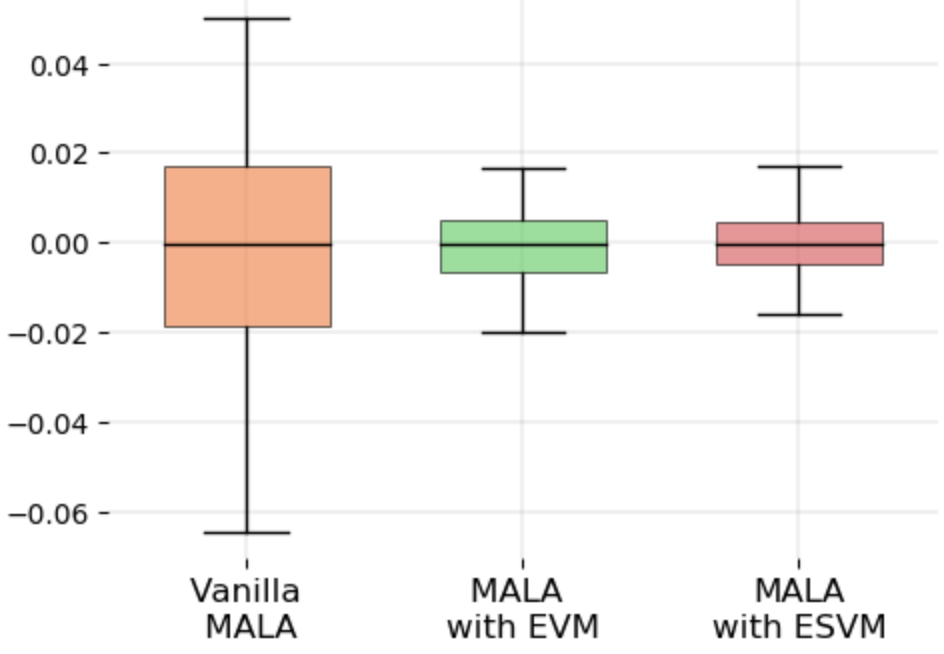}
  \includegraphics[width=0.328\linewidth]{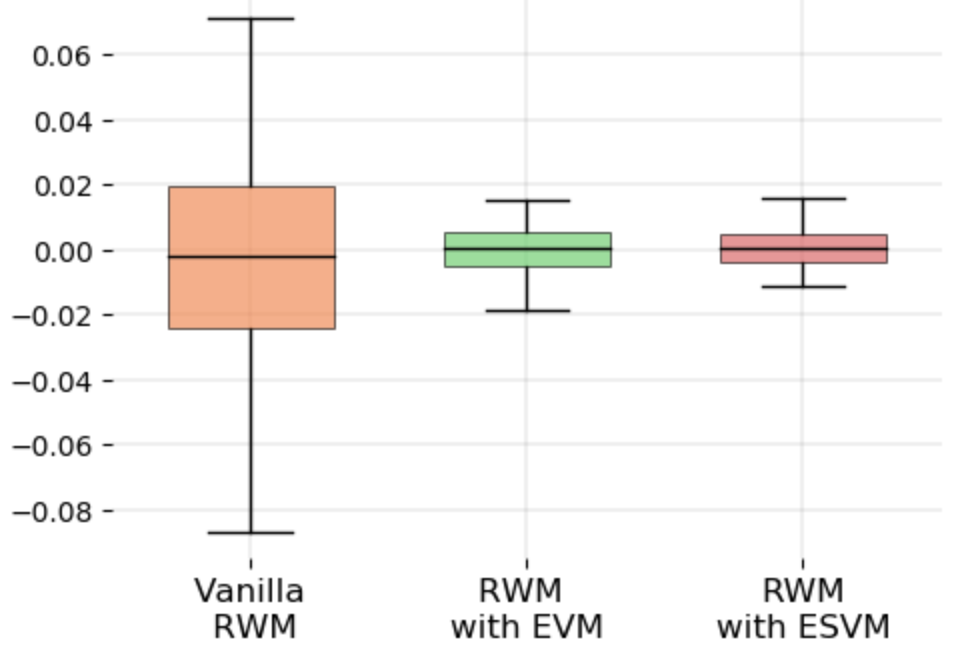}
\end{minipage}
\end{figure}

\paragraph{Gaussian Mixture with isolated modes\label{sec:GMM_isolated}}
{Let us now consider the Gaussian mixture model with different means and covariates,
$\pi= \rho\Norm(\mu_1, \sigma_1) + (1- \rho)\Norm(-\mu_2, \sigma_2)$ with
\(\rho \in [0,1]\). For simplicity, we let $d = 1$.
We are interested in the case when $|\mu_1 - \mu_2| \gg \max\{\sigma_1,\sigma_2\}$. 
When sampling from $\pi$ using ULA, MALA, or RWM, the corresponding Markov chain tends to  ``stuck" at the modes of density $\pi$, which leads to slow convergence.}
We are going to compare the results obtained using ESVM and EVM with the ones from \citet{mijatovic2018} based on a discretized Poisson equation. For comparison purposes, we will reproduce experiments from the aforementioned paper, see Section 5.2.1, and refer to the reported variance reduction factors.
\par
{
Our aim is to estimate $\pi(f)$ with $f(x) = x^3$.
We fix $\rho = 0.4$, $\mu_1 = -3$, $\mu_2 = 4$, $\sigma_1 = 1$, $\sigma_2 = 0.5$,
and use RWM with step size $\gamma = 1.0$ as a generating procedure. 
Results for the second-order control variates (our standard choice) are reported in Table~\ref{tab:Table_gmm_1d}, 
showing that this class of functions $\Phi$ does not allow us to achieve comparable to \citet{mijatovic2018} variance reduction factors. 
Let us consider instead the following set of radial basis functions  
\begin{equation}
\label{eq:cv-gmm}
\Phi(x) = \sum_{k=1}^{r}a_k(x-b_k)\exp\left(-\frac{(x-b_k)^2}{2}\right),
\end{equation}
where $a_k$, $b_k \in \mathbb{R}$, $k = 1,\ldots,r$. 
The ESVM algorithm with control variates determined by $\Phi(x)$ from \eqref{eq:cv-gmm}
will be referred to as the ESVM-r algorithm.
Results for ESVM-r are also given in Table~\ref{tab:Table_gmm_1d} showing comparable results with the Poisson-based approach from \citet{mijatovic2018}
(it is referred to as the Poisson-CV) and even outperforming it for large enough train sample size $n_{\text{train}}$ and number of basis functions $r$.}

\begin{table}[!htb]\centering
\rat{1.1}
\begin{minipage}{0.88\linewidth}\centering
\captionsetup{font=small}
\caption{Variance Reduction Factors in GMM with isolated modes.}
\label{tab:Table_gmm_1d} 
\resizebox{\textwidth}{!}{
\begin{tabular}{@{}lcccccccc@{}}
\toprule
$n_{\text{train}}$  	&\phantom{abc} &EVM-2& ESVM-2& Poisson-CV&ESVM-r, $r = 4$ &  ESVM-r, $r = 10$ & ESVM-r, $r = 20$\\ 
\toprule
$10^4$ 				&& $1.03$ 	& $1.04$ 	& up to $8900$	  &  $95.8$	 & $6457.2$            & $265382.8$  \\ \hline
$10^5$  				&& $1.92$ 	& $1.20$ 	& up to $13200$ & $98.8$ & $7176.5$ & $378249.0$    \\ 
\bottomrule
\end{tabular}}
\end{minipage}
\end{table}

\paragraph{Banana-shape density\label{sec:Banana}}
The ``Banana-shape" distribution, proposed by \citet{Haario1999}, can be obtained from a \(d\)-dimensional Gaussian vector with zero mean and covariance $\mathrm{diag}(p,1,\ldots,1)$ by applying transformation
\[
\varphi_b(x): \R^d \rightarrow \R^d, \quad \varphi(x) = (x_1,\,x_2 + b x_1^2 - p b,\,x_3,\ldots,x_d), 
\]
where $p>0$ and $b > 0$ are parameters; here $b$ controls the curvature of density's level sets. The potential $U$ is given by
\[
U(x_1,\ldots,x_d) = \fraca{x_1^2}{2p} + (x_2 + bx_1^2 - p b)^2 + \sum\nolimits_{k=3}^{d}\fraca{x_k^2}{2}.
\]
As can be easily seen,  the assumption (H3) holds. As to the assumption (H1), it is fulfilled only locally.
The quantity of interest is $\E_\pi[X_2]$. In our simulations, we set $p=100$, $b= 0.1$ and consider \(d=2\) and \(d=8\).  
VRFs are reported in Table~\ref{tab:Table_banana}. Boxplots for $d=8$ are shown in Figure~\ref{fig:banana}. In this problem, ESVM significantly outperforms EVM both for \(d=2\) and \(d=8\). Because of the curvature of the level sets, the step sizes in all considered methods should be chosen small enough, leading to highly correlated samples. This explains a  poor performance of the EVM method in this context.


\begin{table}[htb]\centering
\rat{1.1}
\begin{minipage}{0.88\linewidth}\centering
\captionsetup{font=small}
\caption{Estimation of $\E_\pi[X_2]$ for the banana-shaped density in $d=2$ and $d=8$.}\label{tab:Table_banana}
\resizebox{0.65\textwidth}{!}{
\begin{tabular}{@{}lcccccccc@{}}\toprule
 & \phantom{abc} &  \multicolumn{3}{c}{\(d=2\)} & \phantom{abc} & \multicolumn{3}{c}{\(d=8\)} \\
\cmidrule{3-5} \cmidrule{7-9} 
Method & \phantom{abc} & ULA  & MALA & RWM  &  & ULA  & MALA & RWM  \\
\toprule
ESVM &&  $\bf{4.7}$ & $\bf{2.7}$ & $\bf{42.4}$ &&  $\bf{5.3}$ & $\bf{6.5}$ & $\bf{18.5}$  \\
EVM &&  $1.4$ & $1.3$ & $1.5$  &&  $1.4$ & $4.6$ & $1.7$ \\
\bottomrule
\end{tabular}}
\end{minipage}
\end{table}

\begin{figure}[!htb]\centering
\begin{minipage}{0.88\linewidth}
\captionsetup{font=small}
\caption{Estimation of $\E_\pi[X_2]$ for the banana-shape density in $d = 8$.
Boxplots are given for ULA, MALA, and RWM respectively.}\label{fig:banana}
  \includegraphics[width=0.328\linewidth]{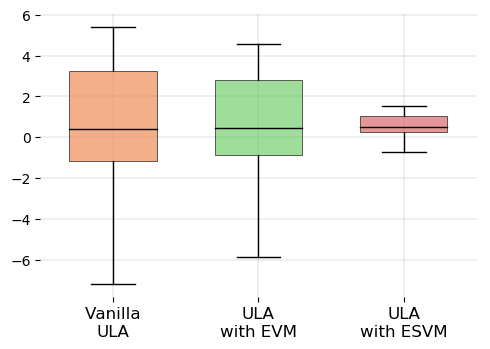}
  \includegraphics[width=0.328\linewidth]{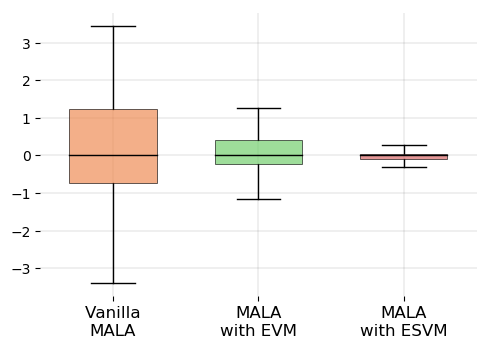}
  \includegraphics[width=0.328\linewidth]{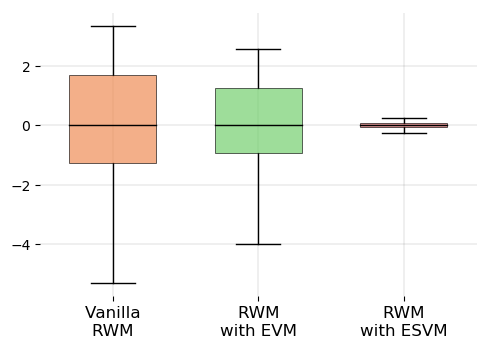}
\end{minipage}
\end{figure}

\paragraph{Logistic and probit regression\label{sec:Regression}}
Let $\yb=(\yb_1,\ldots,\yb_n)\in \{0,1\}^n$ be a vector of binary response variables, $x \in\rset^d$ be a vector of regression coefficients, and $\xb \in \rset^{N \times d}$ be a design matrix. 
The log-likelihood and likelihood of $i$-th point for the logistic and probit regression are given by
\begin{align*}
  \logl{l}(\yb_i | x, \xb_i) & = \yb_i  \xb_i^{\T} x -  \ln(1+\rme^{\xb_i^{\T} x}), \quad \likel{l}(\yb_i | x,\xb_i) = \exp(\logl{l}(\yb_i | x, \xb_i)), \\
  \logl{p}(\yb_i | x, \xb_i) & = \yb_i \ln(\Phi(\xb_i^{\T} x)) + (1-\yb_i)\ln(\Phi(-\xb_i^{\T} x)), \quad \likel{p}(\yb_i | x,\xb_i) = \exp(\logl{p}(\yb_i | x, \xb_i)),
 \end{align*}
where $\xb_i^{\T}$ is the $i$-th row of $\xb$ for $i \in \lbrace1,\ldots,N\rbrace$. We complete the Bayesian model by considering the Zellner $g$-prior for the regression parameter $x$, that is, \(\mathcal{N}_{d}(0,g(\xb^{\T}\xb)^{-1})\). Defining $\tilde{x}= (\xb^{\T} \xb)^{1/2} x$ and $\tilde{\xb}_i= (\xb^{\T} \xb)^{-1/2} \xb_i$, the scalar product is preserved, that is $\langle x, \xb_i \rangle = \langle \tilde{x}, \tilde{\xb}_i \rangle$ and, under the Zellner $g$-prior, $\tilde{x} \sim \mathcal{N}_{d}(0,g I_d)$. In the sequel, we apply the algorithms in the transformed parameter space with normalized covariates and put $g = 100$.
\par
The unnormalized posterior probability distributions $\pib{l}$ and $\pib{p}$ for the logistic and probit regression models are defined for all $\tilde{x} \in \rset^d$ by
\begin{align*}
 \pib{l}(\tilde{x} | \yb, \xb) &\propto \exp(-\Ub{l}(\tilde{x})) \quad \text{with} \quad \Ub{l}(\tilde{x}) = -\sum\nolimits_{i=1}^{N}\logl{l}(\yb_i | \tilde{x}, \xb_i) + (2\sigma^2)^{-1}\norm[2]{\tilde{x}} , \\
 \pib{p}(\tilde{x} | \yb, \xb) &\propto \exp (-\Ub{p}(\tilde{x})) \quad \text{with} \quad \Ub{p}(\tilde{x}) = -\sum\nolimits_{i=1}^{N}\logl{p}(\yb_i | \tilde{x}, \xb_i) + (2\sigma^2)^{-1}\norm[2]{\tilde{x}} .
 \end{align*}
It is straightforward to check that  $\Ub{l}, \Ub{p}$  satisfy  \ref{H1} and \ref{H2}.
\par
We analyze the performance of ESVM algorithm on two datasets from the UCI repository. The first dataset, Pima\footnote{\url{https://www.kaggle.com/uciml/pima-indians-diabetes-database}}, contains  $N = 768$ observations in dimension $d = 9$. {The second one, EEG\footnote{\url{https://archive.ics.uci.edu/ml/datasets/EEG+Eye+State}}, has dimension $d=15$, and for our experiments we take randomly selected subset of size $5000$ (to speed up sampling procedure).} We split each dataset into a training part $\mathcal{T}_N^{\text{train}}= [(y_i,\xb_i)]_{i=1}^N$ and
a test part $\mathcal{T}^{\text{test}}_K= [(y'_i,\xb'_i)]_{i=1}^{K}$ by randomly picking $K$ test points from the data. Then we use ULA, MALA, and RWM algorithms to sample from $\pib{l}(\tilde{x} | \yb, \xb)$ and $ \pib{p}(\tilde{x} | \yb, \xb)$ respectively. 
\par
Given the sample $(\tilde{x}_k)_{k=0}^{n-1}$, we aim at estimating the average likelihood over the test set $\mathcal{T}^{\text{test}}_K$, that is,
\[
\int\nolimits_{\mathbb{R}^d}f(\tilde{x})\pib{l}(\tilde{x} | \yb, \xb) \, \rmd \tilde{x}  \quad \biggl(\text{or } \int\nolimits_{\mathbb{R}^d}f(\tilde{x})\pib{p}(\tilde{x} | \yb, \xb) \, \rmd \tilde{x} \text{ for probit regression}\biggr),
\]
where the function $f$ is given by
\[
f(\tilde{x})= K^{-1} \sum_{i=1}^{K}\likel{l}(y'_i | \xb'_i,\tilde{x}) \quad \biggl(\text{or } K^{-1} \sum_{i=1}^{K}\likel{p}(y'_i | \xb'_i,\tilde{x}) \text{ for probit regression}\biggr).
\]
\par
VRFs are reported for first- and second-order control variates.  Results for logistic regression are given in \Cref{tab:Table_logistic}. Boxplots for the average test likelihood estimation using second-order control variates are shown in \Cref{fig:logistic}. The same quantities for probit regression are reported in \Cref{sec:Fig_Tables},  see \Cref{tab:Table_probit}, \Cref{fig:pima_probit}, and \Cref{fig:eeg_probit}. 
\par
Note that ESVM also outperforms 
EVM in this example. It is worth noting that for ULA and RWM, we show up to $100$ times better performance in terms of VRF. For MALA, the results for EVM and ESVM are similar since the samples are much less positively correlated.

\begin{figure}[!htb]\centering
\begin{minipage}{0.88\linewidth}
\captionsetup{font=small}
\caption{Estimation of the average test likelihood in logistic regression for the Pima dataset (top row) and the EEG dataset (bottom row).
In each {row} boxplots are given {for} ULA, MALA, and RWM respectively.} \label{fig:logistic}\vspace{-0.1cm}

  \includegraphics[width=0.328\linewidth]{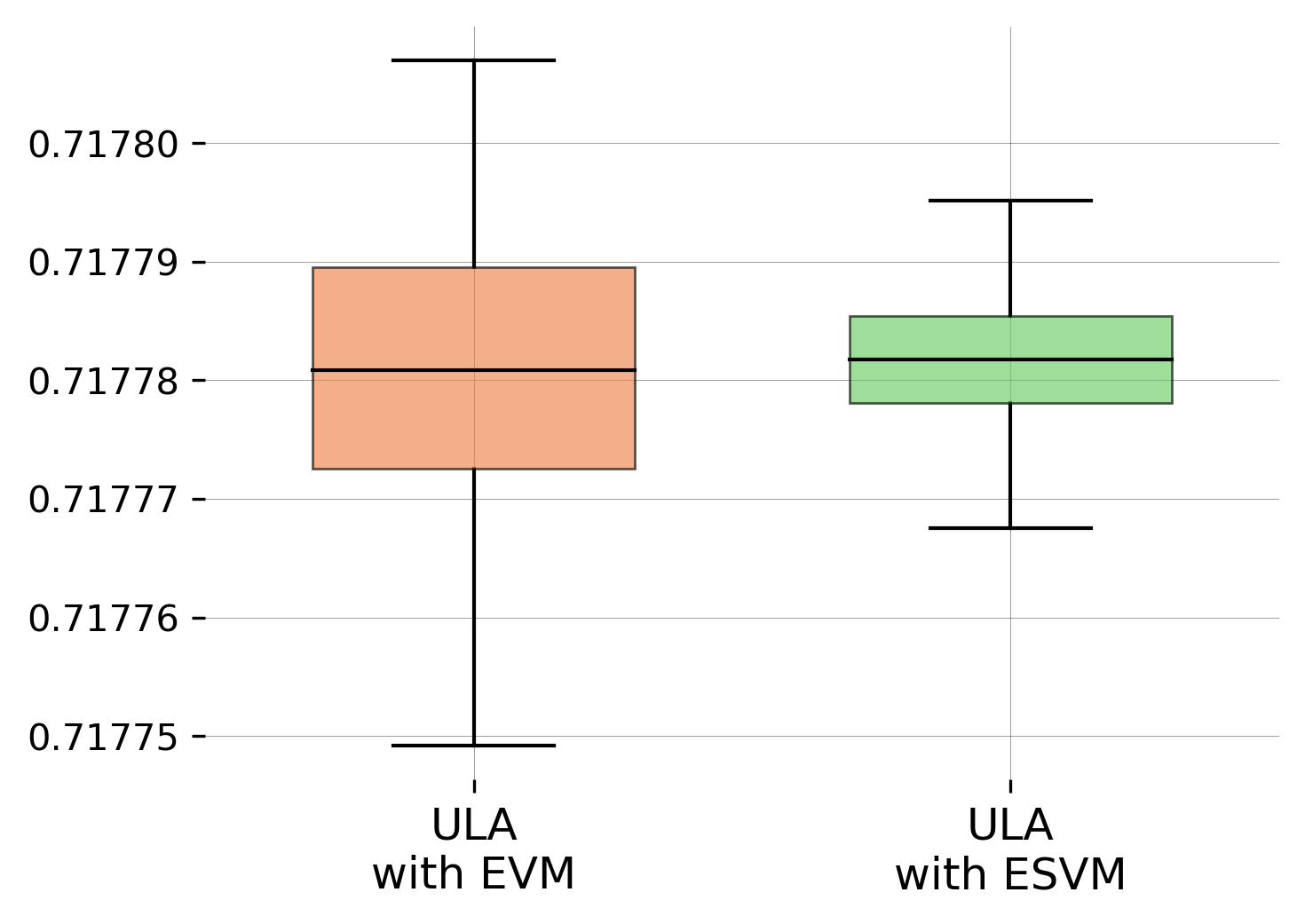}
  \includegraphics[width=0.328\linewidth]{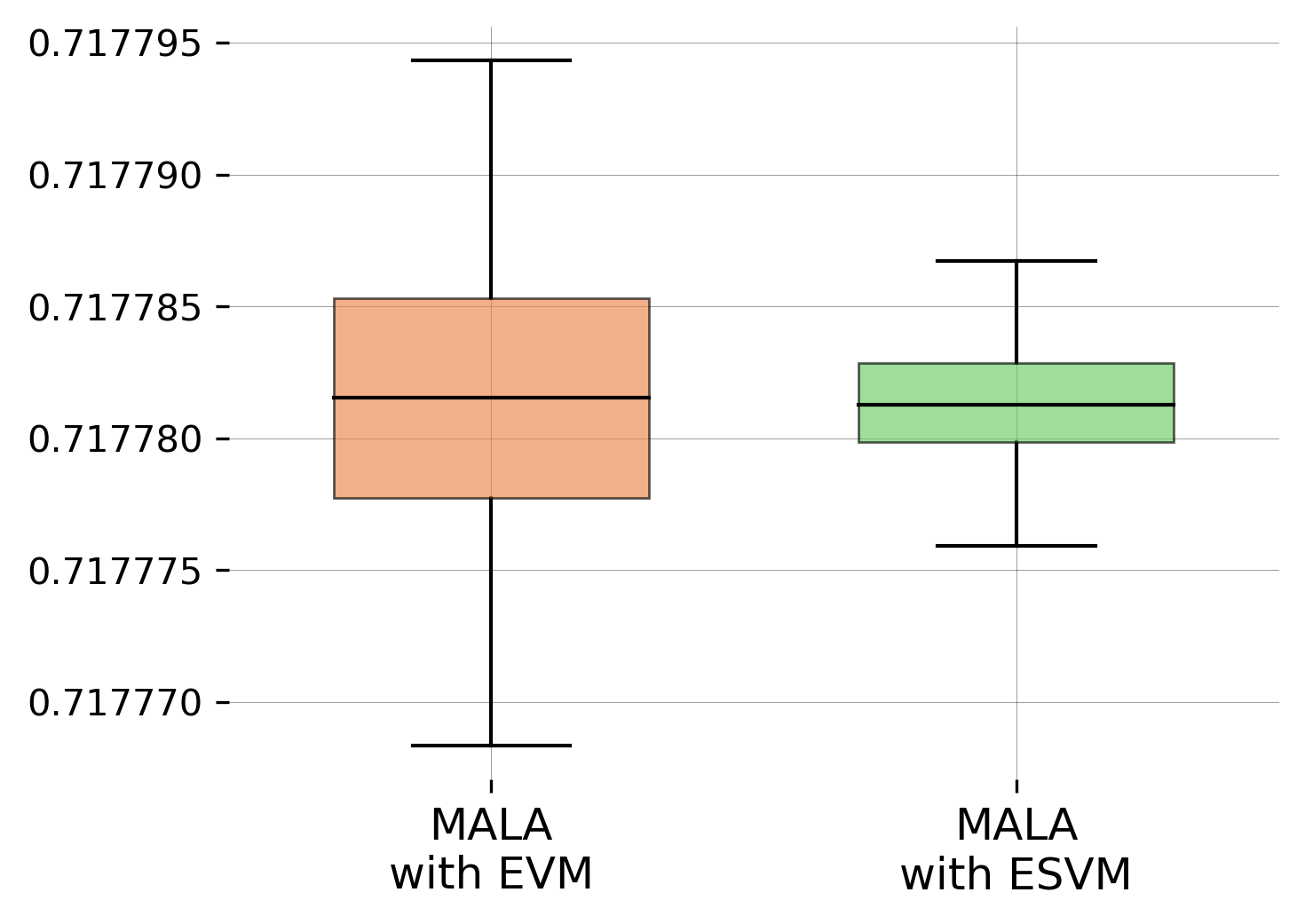}
  \includegraphics[width=0.328\linewidth]{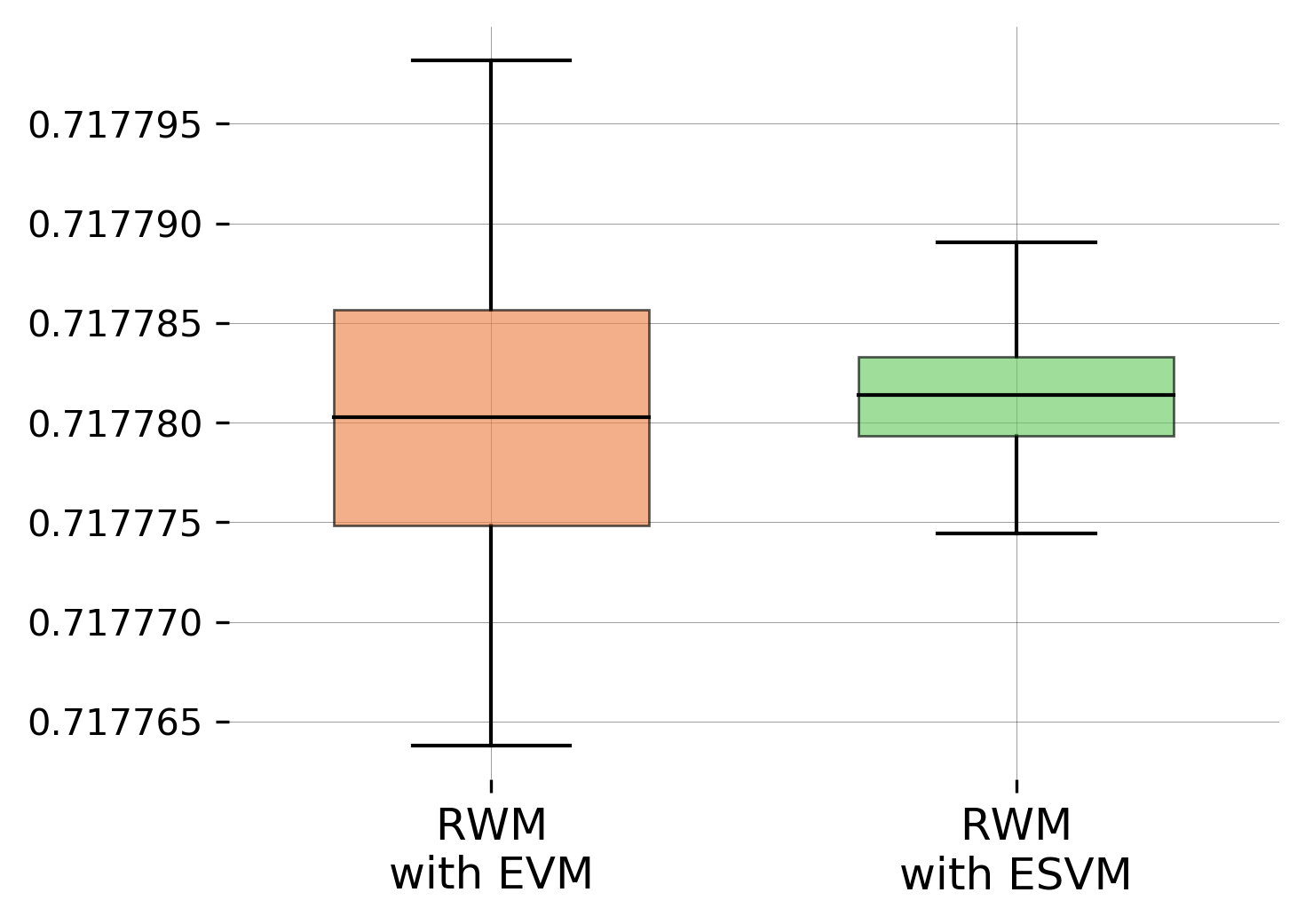}

  \includegraphics[width=0.328\linewidth]{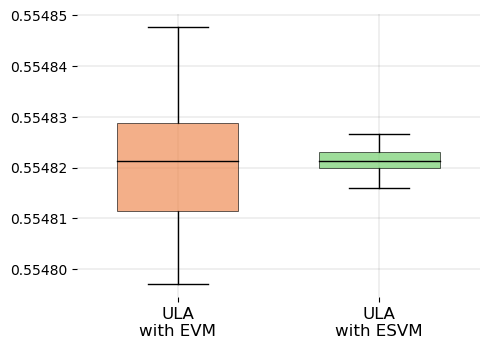}
  \includegraphics[width=0.328\linewidth]{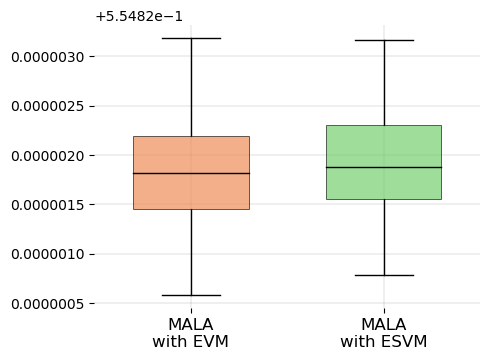}
  \includegraphics[width=0.328\linewidth]{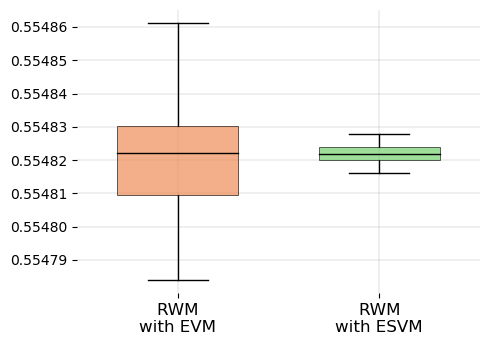}
\end{minipage}
\end{figure}

\begin{table}[!htb]\centering
\rat{1.1}
\begin{minipage}{0.88\linewidth}\centering
\captionsetup{font=small}
\caption{Average test likelihood estimation in logistic regression.}\label{tab:Table_logistic}\vspace{-0.1cm}
\resizebox{0.75\textwidth}{!}{
\begin{tabular}{@{}lcccccccc@{}}\toprule
 & \phantom{abc} &  \multicolumn{3}{c}{PIMA dataset} & \phantom{abc} & \multicolumn{3}{c}{EEG dataset} \\
\cmidrule{3-5} \cmidrule{7-9} 
Method & \phantom{abc} & ULA  & MALA & RWM  &  & ULA  & MALA & RWM  \\
\toprule
ESVM-1 &&   $347.6$ & $535.6$ & $411.7$ &&  $542.3$ & $996.6$ &  $483.5$ \\
EVM-1 &&  $347.9$ & $542.1$ & $415.5$  &&  $548.1$ &  $1020.2$ & $508.9$  \\ \midrule
ESVM-2  &&  $\bf{11387.3}$ & $\bf{28792.8}$ & $\bf{19503.3}$ &&  $\bf{11406.6}$  & $\bf{44612.5}$ & $\bf{11324.9}$ \\
EVM-2  && $2704.8$ & $4087.3$ & $5044.1$ && $350.3$    &    $39985.4$    & $453.3$ \\
\bottomrule
\end{tabular}}
\end{minipage}
\end{table}


\paragraph{Van der Pol oscillator equation\label{sec:VdP}}
The setup of this experiment is much similar to the one reported in \citet{south2018regularised}. Here a position $p_x(t) \in \rset$ evolves in time $t$ according to the second order differential equation
\begin{equation}
\label{eq:vdp}
\frac{\rmd^2 p_x}{\rmd t^2} - x(1-p_x^2)\frac{\rmd p_x}{ \rmd t} + p = 0,
\end{equation}
where $x \in \rset$ is an unknown parameter indicating the non-linearity and the strength of the damping. Letting $q_x = \rmd p_x/ \rmd t$ we can formulate the oscillator as the first-order system
\begin{equation*}
\label{eq:vdp_system}
\begin{cases}
\frac{\rmd p_x}{\rmd t} = q_x, \\
\frac{\rmd q_x}{\rmd t} = x(1-p_x^2)q_x + p_x,
\end{cases}
\end{equation*}
where only the first component $p_x$ is observed. This system was solved numerically using $x_\star = 1$ and starting point $p_{x_\star}(0) = 0$, $q_{x_\star}(0) = 2$. Observations $\yb_i = p_{x_\star}(t_i) + \eps_i$ were made at successive time instants $t_i = i$, $i = 1, \ldots, T$, and Gaussian measurement noise $\eps_i$ of standard deviation $\sigma =  0.5$ was added. We use a normal prior $\pi_0(x)$ with mean $\mu = 1$ and standard deviation $\sigma_0 = 0.5$. The unnormalized posterior probability distribution is defined for all $x > 0$ by
\begin{align*}
\pi(x | \yb) &\propto \exp(-U(x)) \quad \text{with} \quad U(x) =  -\log \pi_0(x) +  \sum_{i=1}^T \frac{(\yb_i - p_x(t_i))^2}{2 \sigma^2}.
\end{align*}
Clearly, $U$  satisfies  \ref{H1} and \ref{H3}. To sample from $\pi(x | \yb)$ we use the MALA algorithm. The quantity of interest is the posterior mean $\int\nolimits_{\mathbb{R}}x\pi(x | \yb)\,\rmd x$. In this example, we use control variates up to degree $3$. Results are presented in \Cref{sec:Fig_Tables} --- VRFs are summarized in \Cref{tab:Table_van_der_pole} and boxplots for the second-order control variates are given in \Cref{fig:van_der_pole}. 
In this problem, ESVM slightly outperforms EVM in terms of variance reduction factor. 

\paragraph{Lotka-Volterra system\label{sec:LV}}
The Lotka-Volterra model is a well-known system of ODEs describing the joint evolution of two interacting biological populations, predators and preys. Denote the population of preys and predators at moment \(t\) by \(u(t)\) and \(v(t)\) respectively, then the corresponding model can be written as the following first-order system
\begin{equation}
\label{eq:lv_system}
\begin{cases}
\frac{\rmd u}{\rmd t} = (\alpha - \beta v)u, \\
\frac{\rmd v}{\rmd t} = (-\gamma + \delta u)v, \\
u(0) = u_{0},\ v(0) = v_{0}.
\end{cases}
\end{equation}
The parameter vector is given by \(x = (\alpha,\beta,\gamma,\delta)\), with all components being non-negative due to the physical meaning of the problem. The system was solved numerically with the true parameters $x_\star = (0.6,0.025,0.8,0.025)$ and starting populations $u_{0} = 30.0$, $v_{0} = 4.0$. The system is observed at successive time moments $t_i = i$, $i = 1,\ldots, T$, with the lognormal measurements $\yb_i \sim \operatorname{Lognormal}(\log{u(t_i)},\sigma^2)$, $\xb_i \sim \operatorname{Lognormal}(\log{v(t_i)},\sigma^2)$ with $\sigma = 0.25$. A weakly informative normal prior $\pi_0(x)$ was used for the model parameters: $\mathcal{N}(1,0.5)$ for $\alpha$ and $\gamma$, $\mathcal{N}(0.05,0.05)$ for $\beta$ and $\delta$. The posterior distribution is given by $\pi(x | \yb,\xb) \propto \exp(-U(x))$, where
\begin{align*}
U(x) =  -\log \pi_0(x) +  \sum_{i=1}^T \biggl( \frac{(\log{\yb_i} - \log{u(t_i)})^2 + (\log{\xb_i} - \log{v(t_i)})^2}{2 \sigma^2} + \log{\yb_i} + \log{\xb_i} \biggr).
\end{align*}
We use the MALA algorithm to sample from $\pi(x | \yb, \xb)$. The quantity of interest is the posterior mean $\int\nolimits_{\mathbb{R}^4}x\pi(x | \yb,\xb)\,\rmd x$. 
VRFs are summarized in \Cref{tab:Table_lv} and boxplots for the second-order control variates are given in \Cref{fig:lv} and \Cref{sec:Fig_Tables}, \Cref{fig:lv_all}. For some model parameters ESVM significantly outperforms EVM in terms of VRF, for others the results are comparable with slight superiority of ESVM. 


\begin{table}[htb]\centering
\rat{1.1}
\begin{minipage}{0.88\linewidth}\centering
\captionsetup{font=small}
\caption{Estimation of the posterior mean in the Lotka-Volterra model.}\label{tab:Table_lv} 
\resizebox{0.6\textwidth}{!}{
\begin{tabular}{@{}lccccc@{}}\toprule
Estimated parameter  	&\phantom{abc} & $\alpha$  		& $\beta$ 		& $\delta$ 	& $\gamma$  \\ 
\toprule
ESVM-1 				&& $\bf{10.5}$ 		& $\bf{6.5}$ 	& $\bf{6.2}$	& $\bf{8.3}$	\\ \hline
EVM-1  				&& $6.6$ 			& $4.2$ 		& $4.9$		& $6.0$		\\ \hline
ESVM-2 				&&   $\bf{757.6}$  	& $\bf{427.8}$ 	& $\bf{277.2}$	& $\bf{446.6}$		\\ \hline
EVM-2  				&&   $642.1$    		& $286.0$  	& $275.0$		& $429.7$	\\ 
\bottomrule
\end{tabular}}
\end{minipage}
\end{table}

\begin{figure}[!htb]\centering
\begin{minipage}{0.88\linewidth}\centering
\captionsetup{font=small}
\caption{Estimation of the posterior mean of $\beta$ (left figure) and $\delta$ (right figure) in the Lotka-Volterra model.}\label{fig:lv}
  \includegraphics[width=0.328\linewidth]{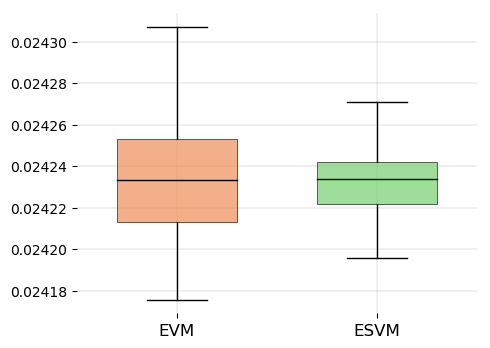}
  \includegraphics[width=0.328\linewidth]{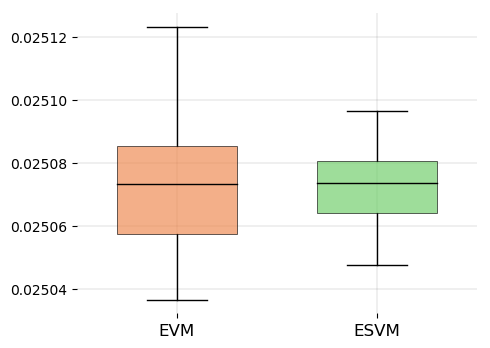}
\end{minipage}
\end{figure}

\section{Proofs}
\label{sec:proofs}
\subsection{Proof of \Cref{th:main_ex}}
Before we proceed to the proof of \Cref{th:main_ex},
let us refer to a general result from \citet{nickl2007bracketing}
which is used below to bound the fixed point
of a subset of a weighted Sobolev space. First we need to introduce some notations.
\par
Let $\mu$ be a (nonnegative) Borel measure.
Given the two
functions $l,u: \Xset \to \R$ in $\lp(\mu)$, the bracket $[l,u]$ is the set of all
functions in $\lp(\mu)$ with $l \leq f \leq u$. The $\lp(\mu)$-size of the bracket $[l,u]$ is defined as $\| l-u\|_{\lp(\mu)}$.
The $\lp(\mu)$-bracketing number $\mathcal{N}^{[\,]}_{\lp(\mu)}(F,\eps)$
of a (non-empty) set $F$ is the minimal number of brackets of
$\lp(\mu)$-size less than or equal to $\eps>0$ necessary to cover $F$.
The logarithm of the bracketing number is called the $\lp(\mu)$-bracketing metric entropy
$H^{[\,]}_{\lp(\mu)}(F, \eps)$.

\begin{theorem}[\protect{\cite[Corollary~4]{nickl2007bracketing}}]\label{nickl}
Let $1 < p <  \infty$, $\beta \in \R$, and $s - d/p > 0$. Let $F$ be a (non-empty) {norm-}bounded subset of
$W^{s,p}(\R^d, \langle x \rangle^{\beta})$. Suppose $M$ is a (non-empty) family of Borel measures on
$\R^d$ such that the condition $\sup_{\mu \in M}\| \langle x \rangle^{\alpha - \beta} \|_{L^r(\mu)} < \infty$
holds for some $1 \leq r \leq \infty$ and for some $\alpha > 0$.
Then
\[
	\sup_{\mu \in M} H^{[\,]}_{L^r(\mu)}(F, \eps) \lesssim
	\begin{cases}
		\eps^{-d/s}  &\text{for } \alpha>s-d/p,\\
		\eps^{-(\alpha/d + 1/p)^{-1}} &\text{for } \alpha < s - d/p.\\
	\end{cases}
\]
\end{theorem}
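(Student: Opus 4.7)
The plan is to follow the strategy behind \citet{nickl2007bracketing}: first reduce the weighted Sobolev statement to an unweighted one, then decompose $\R^d$ dyadically and trade off local Sobolev approximation against the polynomial tail decay encoded in the moment hypothesis. The two regimes in the statement will emerge from whether smoothness or tail dominates this trade-off.

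Step 1 (Reduction to the unweighted space). By the defining identity~\eqref{eq:wss}, the multiplication map $\Psi: u \mapsto v \eqdef u\langle x\rangle^{\beta}$ is an isometry of $W^{s,p}(\R^d,\langle x\rangle^\beta)$ onto its image in $W^{s,p}(\R^d)$, so $F$ is sent to a norm-bounded set $\tilde F \subset W^{s,p}(\R^d)$. For $u_1,u_2 \in F$ and $v_i = \Psi(u_i)$,
\[
\|u_1 - u_2\|_{L^r(\mu)} = \Bigl(\int |v_1 - v_2|^r \langle x\rangle^{-r\beta}\,\rmd\mu\Bigr)^{1/r} = \|v_1 - v_2\|_{L^r(\nu)},
\qquad \rmd\nu \eqdef \langle x\rangle^{-r\beta}\rmd\mu.
\]
Brackets transform correspondingly, and the hypothesis $\sup_{\mu\in M}\|\langle x\rangle^{\alpha-\beta}\|_{L^r(\mu)}<\infty$ rewrites as $\sup_\nu\int\langle x\rangle^{r\alpha}\,\rmd\nu<\infty$. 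It therefore suffices to bound $H^{[\,]}_{L^r(\nu)}(\tilde F,\eps)$ uniformly over Borel measures $\nu$ with polynomial moments of order $r\alpha$.

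Step 2 (Local Sobolev bracketing via dyadic scaling). Partition $\R^d$ into $A_0 = \{\|x\|\le 1\}$ and annuli $A_k = \{2^{k-1}\le\|x\|<2^k\}$ for $k\ge 1$, and fix a smooth partition of unity $(\chi_k)$ subordinate to slight enlargements. For $v \in \tilde F$, each localized piece $v\chi_k$ has $W^{s,p}$-norm bounded uniformly in $k$. Rescaling $A_k$ to the reference annulus $A_1$ via $y = 2^{-k}x$, the $W^{s,p}$-norm of the rescaled function picks up a factor $2^{k(s-d/p)}$, so after rescaling the pieces live in a ball of radius $\lesssim 2^{k(s-d/p)}$ in $W^{s,p}(A_1)$. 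The condition $s - d/p > 0$ allows Sobolev embedding into continuous functions on $A_1$, and the classical Birman-Solomyak/Kolmogorov bound gives a bracketing entropy in $L^\infty(A_1)$ of order $\eta^{-d/s}$ for the unit ball. Scaling back, the $L^\infty(A_k)$-bracketing entropy at local sup-norm radius $\eta_k$ is $\lesssim (2^{k(s-d/p)}/\eta_k)^{d/s}$, while the moment bound $\nu(A_k) \lesssim 2^{-kr\alpha}$ ensures that such a local bracket contributes at most $\eta_k\, 2^{-k\alpha}$ to the global $L^r(\nu)$-bracket size.

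Step 3 (Global assembly and optimisation). Build global brackets by taking Cartesian products of local ones across $k$; the resulting $L^r(\nu)$-size is bounded by $(\sum_k \eta_k^r 2^{-kr\alpha})^{1/r}$, and annuli beyond some truncation index $k^\star$ are covered by the trivial bracket, whose $L^r(\nu)$-cost is $\lesssim 2^{-k\alpha}$ thanks to the uniform sup-norm bound coming from the global Sobolev embedding $\tilde F \hookrightarrow L^\infty(\R^d)$. The total bracketing entropy $\sum_{k\le k^\star}(2^{k(s-d/p)}/\eta_k)^{d/s}$ is then minimised subject to this feasibility constraint. A Lagrangian analysis produces the announced dichotomy: when $\alpha > s - d/p$ the tail contribution in the constraint and the growth factor $2^{k(s-d/p)d/s}$ in the objective balance so that a stationary choice of $\eta_k\sim\eps$ yields the classical rate $\eps^{-d/s}$; when $\alpha < s - d/p$ the sum has to be truncated at $k^\star \sim \alpha^{-1}\log(\eps^{-1})$ and joint optimisation over the $\eta_k$ and $k^\star$ produces the modified exponent $(\alpha/d + 1/p)^{-1}$, in which $\alpha/d$ reflects the polynomial weight decay and $1/p$ is inherited from the dilation factor $2^{k(s-d/p)}$.

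The main obstacle is the precise execution of this Lagrangian balance: in the tail-dominated regime the entropy integrand grows geometrically in $k$, and the feasibility and objective must be calibrated simultaneously to produce exactly the exponent $(\alpha/d + 1/p)^{-1}$. The bookkeeping for how the Sobolev norm rescales (in particular tracking where $1/p$ appears) together with the assembly of local brackets via the partition of unity without introducing logarithmic losses is delicate, and the detailed computation can be imported from the argument of Corollary~4 in \citet{nickl2007bracketing}.
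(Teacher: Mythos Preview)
The paper does not supply its own proof of this statement: Theorem~\ref{nickl} is quoted verbatim from \citet[Corollary~4]{nickl2007bracketing} and is used as a black box in the proof of Proposition~\ref{th:main_ex}. There is therefore no argument in the paper to compare your sketch against.

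Your outline is a reasonable reconstruction of the Nickl--P\"otscher strategy (reduction to the unweighted space, dyadic localisation, local $L^\infty$ bracketing via Sobolev embedding, and a trade-off between the entropy growth $2^{k(s-d/p)d/s}$ and the tail decay $2^{-k\alpha}$), and you are candid that the delicate optimisation producing the exponent $(\alpha/d+1/p)^{-1}$ is deferred to the original reference. That is consistent with how the present paper treats the result. If anything, you have provided more detail than the paper does; just be aware that for the purposes of this paper no proof is expected here at all.
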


\begin{proof}[Proof of \Cref{th:main_ex}.]
We first bound the metric entropy of $\H$ by the bracketing metric entropy.
If $h \in \H$ is in the $2\eps$-bracket $[l, u]$, $l, u \in \H$, then it is in the ball of radius $\eps$ around $(l + u)/2$.
So,
\[
	H_{\ltwo(\pi)}(\H, \eps) \leq H^{[\,]}_{\ltwo(\pi)}(\H, 2\eps).
\]
Now our aim is apply Theorem \ref{nickl} to $\H$ which
is a norm-bounded subset of $W^{s,p}(\R^d, \langle x \rangle^{\beta})$ by assumption.
For $M = \{ \pi \}$ and $r =2$, the condition $\sup_{\mu \in M}\| \langle x \rangle^{\alpha - \beta} \|_{L^r(\mu)} < \infty$
also holds by assumption.
Hence,
	\[
	H_{\ltwo(\pi)}(\H,\eps) \lesssim
	\begin{cases}
		\eps^{-d/s}  &\text{for } \alpha>s-d/p,\\
		\eps^{-(\alpha/d + 1/p)^{-1}} &\text{for } \alpha < s - d/p.\\
	\end{cases}	
	\]	
Now we turn to the bound for the fixed point $\gamma_{\ltwo(\pi)}(\H, n)$ (see \eqref{eq:definition-fixed-point}).
Consider first the case $\alpha>s-d/p$.
The solution to the inequality $\eps^{-d/s} \lesssim n\eps^2$ is $\eps \gtrsim n^{-\frac{1}{2+d/s}} $.
Taking
$\eps_0 \sim n^{-\frac{1}{2+d/s}}$, where $\sim$ stands for equality up to a constant, yields
\[
	H_{\ltwo(\pi)}(\H,\eps_0) \lesssim n\eps_0^2,
	\quad\text{for } \alpha>s-d/p.
\]
Since $\gamma_{\ltwo(\pi)}(\H, n)$ is the infimum over all such $\eps > 0$, it holds
$\gamma_{\ltwo(\pi)}(\H, n) \lesssim n^{-\frac{1}{2+d/s}}$.
Repeated computations for $\alpha < s - d/p$ give us
$\gamma_{\ltwo(\pi)}(\H, n) \lesssim n^{-\frac{1}{2+(\alpha/d + 1/p)^{-1}}} $.
Combining these two bounds, we have
	\[
	\gamma_{\ltwo(\pi)}(\H, n) \lesssim
	\begin{cases}
		n^{-\frac{1}{2+d/s}}  &\text{for } \alpha>s-d/p,\\
		n^{-\frac{1}{2+(\alpha/d+ 1/p)^{-1}}} &\text{for } \alpha < s - d/p,\\
	\end{cases}	
	\]
which is the desired conclusion.
\end{proof}

\subsection{Spectral variance estimator}\label{sec:reprsv}
We investigate properties of the spectral variance $\SV{h}$ defined in \eqref{eq:sv}.
Note that  $\SV{h}$ can be represented as a quadratic form $Z_n(h)^{\T}A_nZ_n(h)$, where
$Z_n(h)=(h(X_0),\ldots,h(X_{n-1}))^{\T}$ and
$A_n$ is an $n \times n$ symmetric matrix.
Namely, let $\Id_n$ be the identity $n\times n$ matrix and $\bigone_n= (1,\dots,1)^{\T} \in \rset^n$.
Given the lag window $w_n$, we denote the weight matrix by $W_n=(w_n(j-i))_{i,j=1}^n$.
By rearranging the summations in \eqref{eq:sv}, we have
\begin{equation*}
   \SV{h} = n^{-1}\sum_{k=0}^{n-1}  \sum_{j=0}^{n-1}  w_n(k-j) \Bigl(h(X_k) - \pi_n(h)\Bigr) \Bigl(h(X_{j}) - \pi_n(h)\Bigr),
\end{equation*}
Hence the spectral variance can be represented as
\begin{align}\label{eq:qsv}
	\SV{h} = Z_n(h)^{\T}A_nZ_n(h)
	\quad\text{for} \
	A_n=\frac{1}{n}\,\left(\Id_n - \frac{1}{n}\bigone_n \bigone_n^{\T} \right)^{\T}W_n \left(\Id_n - \frac{1}{n}\bigone_n \bigone_n^{\T}\right).
\end{align}
In the following lemma we provide an upper bound on the operator norm of $A_n$.
\begin{lemma}\label{lem:opsv}
If the truncation point $b_n$ of the lag window $w_n$ satisfies
$b_n \leq n$, then $\left\| A_n \right\| \leq 2{b_n}/{n}$.
\end{lemma}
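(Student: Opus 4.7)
The plan is to factor the bound through two simpler estimates: one on the projection matrix and one on the Toeplitz weight matrix.

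First, I would observe that $P_n \eqdef \Id_n - \frac{1}{n}\bigone_n \bigone_n^{\T}$ is the orthogonal projection onto the hyperplane $\{v \in \R^n : \bigone_n^{\T} v = 0\}$. Since $P_n$ is an orthogonal projection, it is self-adjoint with $P_n^2 = P_n$, and hence its operator norm is $1$ (it has eigenvalues in $\{0,1\}$). Using the submultiplicativity of the operator norm on the expression
\begin{equation*}
A_n = \tfrac{1}{n} P_n^{\T} W_n P_n,
\end{equation*}
we immediately get $\| A_n \| \le \tfrac{1}{n}\|P_n\|^2 \|W_n\| = \tfrac{1}{n}\|W_n\|$.

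Next I would bound $\|W_n\|$. The matrix $W_n$ is symmetric (since $w$ is symmetric, $w_n(j-i) = w_n(i-j)$), so its operator norm is bounded by the maximum absolute row sum. By the hypotheses on the lag window, $|w_n(s)| \le 1$ for all $s$ and $w_n(s) = 0$ whenever $|s| \ge b_n$ (because $w$ is supported on $[-1,1]$ and $w_n(s) = w(s/b_n)$). Consequently, for every row index $i \in \{1,\dots,n\}$,
\begin{equation*}
\sum_{j=1}^n |w_n(j-i)| \;\le\; \sum_{s=-(b_n-1)}^{b_n-1} |w_n(s)| \;\le\; 2b_n - 1 \;\le\; 2b_n.
\end{equation*}
The assumption $b_n \le n$ ensures the inner truncated sum actually fits inside each row so the bound is uniform in $i$. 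Combining this with the previous display yields $\|A_n\| \le 2b_n/n$, as claimed.

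The proof is essentially routine; the only mild subtlety is the appeal to symmetry of $W_n$ together with the compact support of $w_n$, which lets us replace the potentially large $\ell^2$ operator norm by the trivial $\ell^\infty \to \ell^\infty$ (row-sum) bound. No spectral analysis of the Toeplitz matrix is needed, which is convenient because general Toeplitz operator norms are harder to control; the compact-support hypothesis on $w$ does all the work here.
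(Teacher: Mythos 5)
Your proof is correct, and the first half (peeling off the centering projector via $\|A_n\|\le n^{-1}\|P_n\|^2\|W_n\|=n^{-1}\|W_n\|$) is exactly what the paper does. Where you diverge is in bounding $\|W_n\|$: the paper passes to the discrete-time Fourier transform $\hat w_n(\lambda)=\sum_{|k|\le b_n} w_n(k)\rme^{-\rmi k\lambda}$, notes $|\hat w_n(\lambda)|\le 2b_n$, and bounds $u^{\T}W_nu=\frac{1}{2\pi}\int_{-\pi}^{\pi}|\sum_k \rme^{\rmi k\lambda}u_k|^2\hat w_n(\lambda)\,\rmd\lambda$ via Parseval; you instead invoke the elementary fact that for a symmetric matrix the spectral norm is dominated by the maximum absolute row sum (equivalently $\|W_n\|_2\le\sqrt{\|W_n\|_1\|W_n\|_\infty}=\|W_n\|_\infty$ by symmetry) and then use the banded structure to bound each row sum by $2b_n-1$. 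Both arguments exploit only the compact support and the bound $|w_n|\le 1$, and both land on the same constant, so the two routes are essentially interchangeable here; yours is somewhat more elementary, while the paper's Fourier representation is the one that would generalize if one wanted sharper spectral information about $W_n$ (e.g.\ positive semidefiniteness of the lag window). One cosmetic remark: your claim that $w_n(s)=0$ for $|s|\ge b_n$ is, strictly speaking, only guaranteed for $|s|>b_n$ from the support condition on $w$ alone (the value $w(\pm1)$ need not vanish), but since the estimator \eqref{eq:sv} only sums over $|s|\le b_n-1$ the weight matrix can be taken with $w_n(\pm b_n)=0$, and the paper's own proof glosses over the same point; it does not affect the stated bound.
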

\begin{proof}
Denote $P= \Id_n - n^{-1} \bigone_n \bigone_n^\T$. Since $P$ is an orthonormal projector, we get
\[
	\left\| A_n \right\| = \frac1n \| PW_nP \| \leq \frac1n \|W_n\|.
\]
To bound the operator norm of $W_n$ (which is a Toeplitz matrix), we  use the standard technique based on
the discrete-time Fourier transform of the sequence $w:[-b_n,b_n]\to[0,1]$, defined, for $\lambda \in [-\pi,\pi)$ by
\[
\hat{w}_n(\lambda) =  \sum_{k=-b_n}^{b_n} w_n(k) \rme^{-\rmi k \lambda } \,.
\]
Obviously, $|\hat{w}_n(\lambda)|\leq2b_n$.
We have $\|W_n\| = \sup_{\|x\|=1} x^{\T}W_nx$.
Moreover, for any unit vector $u=(u_1,\ldots,u_n)^{\T}$ it holds
\begin{align*}
u^{\T}W_nu &=	
\sum_{k,j=1}^n \left( \frac{1}{2\pi}\int_{-\pi}^{\pi} \rme^{\rmi (k-j) \lambda} \hat{w}_n(\lambda) \rmd \lambda \right) u_ku_j = \frac{1}{2\pi}\int_{-\pi}^{\pi} \biggl|\sum_{k=1}^n \rme^{\rmi k \lambda}u_k\biggr|^2 \hat{w}_n(\lambda) \rmd \lambda \leq 2b_n .
\end{align*}
Hence $\|W_n\| \leq 2 b_n$ and $\| A _n\| \leq 2 b_n/n$.
The lemma is proved.
\end{proof}
In the next lemma we prove several technical results on expectation of the operator norm of $Z_n(h)$ and
$\SV{h}$ which hold under \ref{GE} assumption.
\begin{lemma}\label{lem:evar}
	Under \ref{GE}, it holds for any $h,h'\in\H$
	\begin{align*}
		\E_{x_0}\Bigl[\|Z_n(h)\|^2\Bigr] \leq n\|h\|_{\ltwo(\pi)}^2 + \frac{ \varsigma\driftfunc(x_0)}{1-\rho}\|h\|^2_{\driftfunc^{1/2}},
	\end{align*}
	and
	\begin{align*}
		\E_{x_0}\Bigl[\|Z_n(h) - Z_n(h') \|^2\Bigr]
		&\leq
		n\|h-h'\|_{\ltwo(\pi)}^2 + \frac{\varsigma\driftfunc(x_0)}{1-\rho} \|h-h'\|^2_{\driftfunc^{1/2}} .
	\end{align*}	
	Moreover, for any $h\in\H$, this bound implies
	\begin{align*}
		\PE[x_0]{\SV{h}}\leq
		2b_n{\|h\|_{\ltwo(\pi)}^2} + \frac{ 2\|h\|^2_{\driftfunc^{1/2}}\varsigma\driftfunc(x_0)}{1-\rho}\frac{b_n}{n}.
	\end{align*}	
\end{lemma}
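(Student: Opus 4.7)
The proof decomposes naturally into the three claims, with the first being the workhorse.

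\textbf{Plan for the first bound.} Since $\|Z_n(h)\|^2=\sum_{k=0}^{n-1} h^2(X_k)$, taking expectations gives $\E_{x_0}[\|Z_n(h)\|^2]=\sum_{k=0}^{n-1} P^k h^2(x_0)$. The idea is to invoke geometric ergodicity \ref{GE} on the integrand $h^2$. The key algebraic identity is
\begin{equation*}
\|h^2\|_{\driftfunc}=\sup_{x}\frac{h(x)^2}{\driftfunc(x)}=\Bigl(\sup_{x}\frac{|h(x)|}{\driftfunc(x)^{1/2}}\Bigr)^{2}=\|h\|_{\driftfunc^{1/2}}^{2},
\end{equation*}
so \ref{GE} yields $|P^k h^2(x_0)-\pi(h^2)|\leq \varsigma\,\driftfunc(x_0)\rho^{k}\|h\|_{\driftfunc^{1/2}}^{2}$. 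Summing the geometric series $\sum_{k=0}^{n-1}\rho^{k}\leq 1/(1-\rho)$ produces the stated bound, with the leading term $n\pi(h^2)=n\|h\|_{\ltwo(\pi)}^{2}$.

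\textbf{Plan for the second bound.} This is simply the first bound applied to the function $h-h'\in\H-\H$. One just observes that $Z_n(h)-Z_n(h')=Z_n(h-h')$ and that the argument above did not use anything about $h$ beyond membership in $\H$; the Lyapunov control is through $\|h-h'\|_{\driftfunc^{1/2}}$, and the $\ltwo(\pi)$ control is through $\|h-h'\|_{\ltwo(\pi)}$.

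\textbf{Plan for the third bound.} Using the quadratic form representation \eqref{eq:qsv}, the spectral estimator is controlled pointwise by
\begin{equation*}
\SV{h}=Z_n(h)^{\T}A_n Z_n(h)\leq \|A_n\|\,\|Z_n(h)\|^{2}\leq \frac{2b_n}{n}\|Z_n(h)\|^{2},
\end{equation*}
where the last inequality is exactly \Cref{lem:opsv}. Taking expectations under $\P_{x_0}$ and substituting the first bound of the lemma yields the stated estimate after distributing the factor $2b_n/n$.

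\textbf{Main obstacle.} There is no genuinely hard step, but the only mildly subtle point is switching from a drift condition naturally phrased for $h$ (i.e.\ $\|h\|_{\driftfunc}$) to one for $h^2$; the squaring identity $\|h^2\|_{\driftfunc}=\|h\|_{\driftfunc^{1/2}}^{2}$ is what forces the $\driftfunc^{1/2}$-seminorm to appear in the statement, and is crucial for the bounds to be useful when $\driftfunc$ is, e.g., quadratic and $h$ is linear. Once this identification is made, everything else is a bookkeeping exercise combining \ref{GE}, the geometric sum, and \Cref{lem:opsv}.
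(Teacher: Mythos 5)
Your proposal is correct and follows essentially the same route as the paper: expand $\E_{x_0}[\|Z_n(h)\|^2]=\sum_k P^k h^2(x_0)$, control the deviation from $\pi(h^2)$ via \ref{GE} using the identity $\|h^2\|_{\driftfunc}=\|h\|^2_{\driftfunc^{1/2}}$ (the paper phrases this as $\int|h|^2\,|P^k(x_0,\cdot)-\pi|(\rmd x)\le\|h\|^2_{\driftfunc^{1/2}}\Vnorm{P^k(x_0,\cdot)-\pi}{\driftfunc}$, which is the same estimate), sum the geometric series, repeat for $h-h'$, and conclude via $\PE[x_0]{\SV{h}}\le\|A_n\|\,\E_{x_0}[\|Z_n(h)\|^2]$ with \Cref{lem:opsv}. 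No gaps.
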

\begin{proof}
We first observe that
\begin{align*}
	\E_{x_0}\Bigl[\|Z_n(h)\|^2\Bigr]
	= \PElr[x_0]{\sum\nolimits_{k=0}^{n-1} h^2(X_k)}
	= \sum\nolimits_{k=0}^{n-1} \|h\|_{\ltwo(P^{k}(x_0,\cdot))}^2.
\end{align*}
Now each summand can be bounded in the following way,
\begin{align*}
	\|h\|_{\ltwo(P^{k}(x_0,\cdot))}^2
	&=
	\|h\|_{\ltwo(\pi)}^2 + \Bigl(\|h\|_{\ltwo(P^{k}(x_0,\cdot))}^2  -  \|h\|_{\ltwo(\pi)}^2\Bigr)\\
	&\leq
	\|h\|_{\ltwo(\pi)}^2 + \int |h(x)|^2 |P^{k}(x_0,\cdot) - \pi |(\rmd x)\\
	&\leq
	\|h\|_{\ltwo(\pi)}^2 +   \VnormFunc[2]{h}{\driftfunc^{1/2}} \VnormFunc{P^k(x_0,\cdot) - \pi}{\driftfunc}.
\end{align*}
This inequality and  \ref{GE} together imply
\begin{align*}
	\E_{x_0}\Bigl[\|Z_n(h)\|^2\Bigr]
	&\leq
	n\|h\|_{\ltwo(\pi)}^2 + \frac{ \|h\|^2_{\driftfunc^{1/2}} \varsigma\driftfunc(x_0)}{1-\rho}\eqsp,
\end{align*}
which proves the first inequality.
Repeated computations for $Z_n(h) - Z_n(h')$ yield
\begin{align*}
	\E_{x_0}\Bigl[\|Z_n(h) - Z_n(h') \|^2\Bigr]
	&\leq
	n\|h-h'\|_{\ltwo(\pi)}^2 + \frac{ \varsigma\driftfunc(x_0)}{1-\rho} \|h-h'\|^2_{\driftfunc^{1/2}}.
\end{align*}
The first statement is proved. To prove the second statement we note that
\[
	\PE[x_0]{\SV{h}} = \E_{x_0}\Bigl[Z_n(h)^{\T}A_nZ_n(h)\Bigr]
	\leq \|A_n \| \E_{x_0}\Bigl[\|Z_n(h)\|^2\Bigr].
\]
By \Cref{lem:opsv} we have  $\|A_n\| \leq 2b_n/n$.
Substituting this  we deduce our claim.	
\end{proof}
\par
It is known that the spectral variance $\SV{h}$ is a biased estimate of the asymptotic variance
$\AV{h}$. In the following proposition we show how close is the expected value of $\SV{h}$ to $\AV{h}$.

\begin{proposition}
\label{prop:bias}
Assume \ref{GE}.  Then for any \(h\in \H\) and any $x_0 \in \Xset$,
\begin{equation*}
	\Bigl|\PE[x_0]{\SV{h}}-V_{\infty}(h)\Bigr|
	\leq
	\frac{\varsigma^{1/2}\pi(\driftfunc)\VnormFunc[2]{\th}{\driftfunc^{1/2}} }{1-\rho^{1/2}}
	\biggl( \frac{9\varsigma\driftfunc(x_0)}{(1-\rho)\pi(\driftfunc)}\frac{b_n}{n^2} +\frac{9b_n}{n} + 2\rho^{b_n/2}  \biggr)\eqsp,
\end{equation*}
where $\th= h - \pi(h)$.
Moreover, if $n \ge \varsigma\driftfunc(x_0)/((1-\rho)\pi(\driftfunc))$ then
\begin{equation*}
	\Bigl|\PE[x_0]{\SV{h}}-V_{\infty}(h)\Bigr|
	\leq
	\frac{20\varsigma^{1/2}\pi(\driftfunc) \VnormFunc[2]{\th}{\driftfunc^{1/2}}}{1-\rho^{1/2}}	
	\biggl( \frac{b_n}{n} \vee \rho^{b_n/2}  \biggr)\eqsp,
\end{equation*}
where $a \vee b \eqdef \max\{ a,b \}$.
\end{proposition}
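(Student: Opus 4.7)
The plan is to split the bias $\PE[x_0]{\SV{h}} - V_\infty(h)$ into three pieces. Writing $V_\infty(h) = \covcoeff[\pi]{h}{0} + 2\sum_{k\geq 1}\covcoeff[\pi]{h}{k}$ and $\SV{h} = \sum_{|s|<b_n} w_n(s)\,\ecovcoeff[n]{h}{|s|}$, I would insert the two intermediate sums $\sum_{|s|<b_n}w_n(s)\,\covcoeff[\pi]{h}{|s|}$ and $\sum_{|s|<b_n}w_n(s)\,\PE[\pi]{\ecovcoeff[n]{h}{|s|}}$ to decompose $\PE[x_0]{\SV{h}} - V_\infty(h) = T_1 + T_2 + T_3$, where $T_1$ is the truncation/windowing error relative to the infinite autocovariance series, $T_2$ is the stationary finite-sample bias of $\ecovcoeff[n]{h}{s}$ against $\covcoeff[\pi]{h}{s}$, and $T_3$ is the non-stationarity correction from starting at $x_0$ rather than from $\pi$.

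The unifying input is an autocovariance decay bound of the form $|\covcoeff[\pi]{h}{s}| \lesssim \varsigma^{1/2}\pi(\driftfunc)\VnormFunc[2]{\th}{\driftfunc^{1/2}}\rho^{s/2}$. To produce the $\rho^{s/2}$ rate, I would split $P^s = P^{\lceil s/2\rceil}P^{\lfloor s/2\rfloor}$ and apply \ref{GE} to the inner factor, giving $|P^{\lceil s/2\rceil}\th(y)| \leq \varsigma\rho^{s/2}\driftfunc(y)\VnormFunc{\th}{\driftfunc^{1/2}}$; after integrating against $P^{\lfloor s/2\rfloor}(x,\cdot)$ using the $\driftfunc$-moment control inherited from \ref{GE}, one pairs the result with $|\th(x)| \leq \VnormFunc{\th}{\driftfunc^{1/2}}\driftfunc^{1/2}(x)$, integrates against $\pi$, and applies Cauchy--Schwarz to obtain both the $\rho^{s/2}$ decay and the $\pi(\driftfunc)$ normalisation.

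Given the decay bound, the three pieces are handled as follows. For $T_1$, using that $w_n \equiv 1$ on $[-b_n/2,b_n/2]$ and $|w_n|\leq 1$ elsewhere gives $|T_1| \leq 2\sum_{s\geq b_n/2}|\covcoeff[\pi]{h}{s}|$, and the geometric series delivers the $\rho^{b_n/2}/(1-\rho^{1/2})$ contribution. For $T_2$, I would expand $\ecovcoeff[n]{h}{s}$ via $h(X_k) - \pi_n(h) = \th(X_k) - \Delta_n$ with $\Delta_n \eqdef \pi_n(h) - \pi(h)$; the stationary expectation yields the main term $((n-s)/n)\covcoeff[\pi]{h}{s}$ plus cross terms involving $\PE[\pi]{\Delta_n\th(X_k)}$ and $\PE[\pi]{\Delta_n^2}$, each controlled by $V_\infty(|h|)/n$ via the standard variance estimate, and summing over $|s|<b_n$ with $|w_n|\leq 1$ produces the $b_n/n$ factor. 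For $T_3$, setting $g_s(x) \eqdef \th(x)P^s\th(x)$ one uses the identity $\PE[x_0]{\th(X_k)\th(X_{k+s})} - \PE[\pi]{\th(X_k)\th(X_{k+s})} = \int g_s\,\rmd(P^k(x_0,\cdot)-\pi)$; bounding $\VnormFunc{g_s}{\driftfunc} \lesssim \VnormFunc[2]{\th}{\driftfunc^{1/2}}$ uniformly in $s$ (using that $P^s\driftfunc$ is controlled by $\driftfunc$ up to an additive constant arising from \ref{GE}) and applying \ref{GE} to the signed measure $P^k(x_0,\cdot)-\pi$ produces a $\varsigma\driftfunc(x_0)\rho^k$ factor, whose geometric summation in $k$ and scaling by $b_n/n$ yields the $b_n\driftfunc(x_0)/n^2$ contribution.

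The second, simplified inequality then follows because when $n \geq \varsigma\driftfunc(x_0)/((1-\rho)\pi(\driftfunc))$ the first bracketed term is dominated by the second, so the whole bracket is at most $20(b_n/n \vee \rho^{b_n/2})$. The principal obstacle is the autocovariance decay step: upgrading the $\rho^s$ rate that \ref{GE} supplies directly to the required $\rho^{s/2}$ for the $1/(1-\rho^{1/2})$ geometric summation, while simultaneously keeping the normalisation linear in $\pi(\driftfunc)$ rather than in the naive $\pi(\driftfunc^2)$ that a direct pointwise substitution would produce. The $P^s = P^{\lceil s/2\rceil}P^{\lfloor s/2\rfloor}$ splitting combined with invariance of $\pi$ and a judicious Cauchy--Schwarz is the critical technical move.
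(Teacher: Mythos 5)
Your overall architecture — split the bias into a windowing/truncation error for the stationary autocovariance series, a finite-sample bias of the empirical autocovariances, and an initialization correction, then feed everything through a geometric decay bound $|\covcoeff[\pi]{h}{s}|\lesssim \varsigma^{1/2}\rho^{s/2}\pi(W)\VnormFunc[2]{\tilde h}{W^{1/2}}$ — is essentially the paper's (the paper merely keeps your $T_2$ and $T_3$ together and handles the empirical centering via an explicit $A_{n,1}+A_{n,2}+A_{n,3}$ split). The problem is the step you yourself single out as the principal obstacle: your proposed resolution does not work. Splitting $P^s=P^{\lceil s/2\rceil}P^{\lfloor s/2\rfloor}$ and applying \ref{GE} to the inner factor gives $|P^{\lceil s/2\rceil}\tilde h(y)|\le \varsigma\rho^{s/2}W(y)\Vnorm{\tilde h}{W^{1/2}}$, i.e.\ a \emph{full} power of $W(y)$; after integrating against $P^{\lfloor s/2\rfloor}(x,\cdot)$ and pairing with $|\tilde h(x)|\le \Vnorm{\tilde h}{W^{1/2}}W^{1/2}(x)$ you are left with $\int \pi(\rmd x)\,W^{1/2}(x)\,P^{\lfloor s/2\rfloor}W(x)$, and no application of Cauchy--Schwarz with respect to $\pi$ (nor invariance, which only controls $\pi(P^mW)=\pi(W)$) turns this into $\pi(W)$: you end up with $\pi(W^{3/2})$ or $(\pi(W)\pi(W^2))^{1/2}$. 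Under \ref{GE} alone only $\pi(W)<\infty$ is guaranteed, so the resulting bound can be vacuous and in any case does not match the stated normalisation.

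The device the paper actually uses is different: write $|\E_x[\tilde h(X_0)\tilde h(X_s)]|\le|\tilde h(x)|\,\Vnorm{\tilde h}{W^{1/2}}\int W^{1/2}\,\rmd|P^s(x,\cdot)-\pi|$ and apply H\"older \emph{to the finite signed measure} $\mu_s=P^s(x,\cdot)-\pi$, interpolating between its total mass $|\mu_s|(\Xset)\le 2$ and its $W$-integral $\int W\,\rmd|\mu_s|\le\Vnorm{\mu_s}{W}\le\varsigma W(x)\rho^s$. This yields $\int W^{1/2}\rmd|\mu_s|\lesssim\varsigma^{1/2}\rho^{s/2}W^{1/2}(x)$ — simultaneously producing the $\rho^{s/2}$ rate and, crucially, only \emph{half} a power of $W(x)$, so that together with the $W^{1/2}(x)$ from $|\tilde h(x)|$ one gets exactly $W(x)$ pointwise and hence $\pi(W)$ after integration (and, for the initialization term $T_3$, the $W(x_0)$ factor after testing against $P^k(x_0,\cdot)-\pi$). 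Without this interpolation on the signed measure, both your autocovariance decay bound and your uniform bound $\Vnorm{g_s}{W}\lesssim\VnormFunc[2]{\tilde h}{W^{1/2}}$ for $g_s=\tilde h\,P^s\tilde h$ in $T_3$ are unsupported, so the proof as written has a genuine gap at its central lemma.
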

\begin{proof}
Recall that the asymptotic variance $\AV{h}$ may be written as $\AV{h}=\sum_{|s|\ge0} \covcoeff[\pi]{h}{|s|}$ with $\covcoeff[\pi]{h}{s} = \PE[\pi]{\th(X_0)\th(X_{s})}$ and, by definition, $\SV{h} = \sum_{|s|<b_n} w_n(s) \ecovcoeff[n]{h}{|s|}\eqsp$, where the lag $s$ empirical autocovariance coefficient $\ecovcoeff[n]{h}{s}$ is given in \eqref{eq:empirical-autorcovariance}.
We have
\begin{align}
	\Bigl|\PE[x_0]{\SV{h}}-V_{\infty}(h)\Bigr|
	&\leq
	2\sum_{s=0}^{b_n-1} w_n(s) \bigl| \PE[x_0]{\ecovcoeff[n]{h}{s}} - \covcoeff[\pi]{h}{s} \bigr|\nonumber\\
	&\qquad\qquad\qquad
	+ 2\sum_{s=0}^{b_n-1} |1 - w_n(s)| | \covcoeff[\pi]{h}{s} | + 2\sum_{s=b_n}^{\infty} | \covcoeff[\pi]{h}{s} |\eqsp.
\label{eq:prop-bias-decomp}
\end{align}
To bound each summand in this decomposition,
we  need the following lemma.
\begin{lemma}
	Assume \ref{GE}.  Then for any $h\in\H$, $x\in\X$, and $s \in \zset_+$,
	\begin{equation}
	\label{eq:prop-bias-1}
		\Bigl|\PE[x]{ \th(X_0) \th(X_{s}) } \Bigr|
		\leq \varsigma^{1/2}\rho^{s/2} \driftfunc(x)\VnormFunc[2]{\th}{\driftfunc^{1/2}}  \eqsp,
	\end{equation}
	and
	\begin{equation}
	\label{eq:bound-stationary-covariance}
	\bigr| \covcoeff[\pi]{\th}{s}  \bigl| \leq \varsigma^{1/2}\rho^{s/2} \pi(\driftfunc)\VnormFunc[2]{\th}{\driftfunc^{1/2}}  \eqsp.
\end{equation}
\end{lemma}
\begin{proof}
The proof is straightforward.
Since $\pi(\th) = 0$, we have
\begin{align*}
	\Bigl|\PE[x]{ \th(X_0) \th(X_{s}) } \Bigr|
	&\leq \bigl|\th(x)\bigr| \biggl|\int_{\Xset}\th (y) \bigl(P^s(x,\cdot) - \pi\bigr)(\rmd y)\biggr|\\
	&\leq \VnormFunc[2]{\th}{\driftfunc^{1/2}} W^{1/2}(x) \int_{\Xset} W^{1/2}(y)\bigl|P^s(x,\cdot) - \pi\bigr|(\rmd y)\eqsp.
\end{align*}
By H\"older's inequality,
\begin{align*}
	\int_{\Xset} W^{1/2}(y)\bigl|P^s(x,\cdot) - \pi\bigr|(\rmd y)
	&\leq \bigl|P^s(x,\Xset) - \pi(\Xset)\bigr|^{1/2}
		\biggl(\int_{\Xset} W(y)\bigl|P^s(x,\cdot) - \pi\bigr|(\rmd y)\biggr)^{1/2}\\
	&\leq\VnormFunc[1/2]{P^s(x,\cdot) - \pi}{\driftfunc}\eqsp.
 \end{align*}
Combining these bounds and using \ref{GE}, we conclude
\begin{align*}
	\Bigl|\PE[x]{ \th(X_0) \th(X_{s}) } \Bigr|
	&\leq  \varsigma^{1/2}\rho^{s/2} W(x)  \VnormFunc[2]{\th}{\driftfunc^{1/2}}\eqsp,
\end{align*}
and \eqref{eq:prop-bias-1} is proved.
Integrating this relation with respect to the stationary distribution $\pi$, we obtain the second inequality.
The lemma is proved.
\end{proof}
Let us first bound the last two summands in the decomposition \eqref{eq:prop-bias-decomp}.
By definition, $w_n(s) = 1$ for all $s\in[-b_n/2,b_n/2]$.
From \eqref{eq:bound-stationary-covariance} we have the second summand
\begin{align}
	\sum_{s=0}^{b_n-1} |1 - w_n(s)| | \covcoeff[\pi]{h}{s} |
	&\leq \sum_{s=\lceil{b_n/2}\rceil}^{b_n-1}  | \covcoeff[\pi]{h}{s} |
	\leq  \varsigma^{1/2}\pi(\driftfunc) \VnormFunc[2]{\th}{\driftfunc^{1/2}}  \frac{\rho^{b_n/2}}{1-\rho^{1/2}}\eqsp.
\label{eq:prop-bias-s2}
\end{align}
where $\lceil{b_n/2}\rceil$ is the nearest integer greater than or equal to $b_n/2$.
Similar arguments apply to the last summand in \eqref{eq:prop-bias-decomp},
\begin{align}
	\sum_{s=b_n}^{\infty}| \covcoeff[\pi]{h}{s} |
	\leq \varsigma^{1/2}\pi(\driftfunc)\VnormFunc[2]{\th}{\driftfunc^{1/2}}\frac{\rho^{b_n}}{1-\rho^{1/2}}
	\leq \varsigma^{1/2}\pi(\driftfunc)\VnormFunc[2]{\th}{\driftfunc^{1/2}}\frac{\rho^{b_n/2}}{1-\rho^{1/2}}\eqsp.
\label{eq:prop-bias-s3}
\end{align}
It remains to bound the first summand in \eqref{eq:prop-bias-decomp}.
We note that lag $s$ empirical autocovariance coefficient satisfies $\ecovcoeff[n]{h}{s}=\ecovcoeff[n]{\th}{s}$.
Moreover,
for any $s< n$, it may be decomposed as $\ecovcoeff[n]{\th}{s}= \sum_{i=1}^3 A_{n,i}(s)$, where
\begin{align*}
	A_{n,1}(s) \eqdef \frac{1}{n} \sum_{k=0}^{n-s-1} \th(X_k) \th(X_{k+s}), \quad
	A_{n,2}(s) \eqdef  \frac{\pi_n(\th)}{n} \Biggl\{ \sum_{k=0}^{n-s-1} \th(X_k) + \sum_{k=s}^{n-1} \th(X_k) \Biggr\}\eqsp, \
\end{align*}
and $A_{n,3}(s) \eqdef (1-s/n)\pi^2_n(\th)$.
Since $|w_n(s)|\leq1$ by definition, it holds by the triangle inequality
\begin{align}
	\sum_{s=0}^{b_n-1} w_n(s) \bigl| \PE[x_0]{\ecovcoeff[n]{\th}{s}} - \covcoeff[\pi]{h}{s} \bigr|
	&\leq
	\sum_{s=0}^{b_n-1}\bigl| \PE[x_0]{A_{n,1}(s)} - \covcoeff[\pi]{h}{s} \bigr|\nonumber\\
	&\qquad+ \sum_{s=0}^{b_n-1}\bigl| \PE[x_0]{A_{n,2}(s)}\bigr|
	+ \sum_{s=0}^{b_n-1} \bigl| \PE[x_0]{A_{n,3}(s)}\bigr|\eqsp.
	\label{eq:prop-bias-4}
\end{align}
For any $s\in\{0,\ldots,n-1\}$, by the Markov property, \ref{GE}, and \eqref{eq:prop-bias-1} we obtain
\begin{align}
	&\Bigl|  \PE[x_0]{ \th(X_k) \th(X_{k+s}) } -  \covcoeff[\pi]{h}{s} \Bigr|
	= \bigg| \int \PE[x]{\th(X_0) \th(X_s)}(P^k(x_0,\cdot) - \pi) (\rmd x) \bigg| \nonumber\\
	&\qquad\qquad\leq  \varsigma^{1/2}\rho^{s/2}  \VnormFunc[2]{\th}{\driftfunc^{1/2}}
\VnormFunc{P^k(x_0,\cdot) - \pi}{\driftfunc}
	\leq \varsigma^{3/2}\rho^{s/2+k} \driftfunc(x_0) \VnormFunc[2]{\th}{\driftfunc^{1/2}}  \eqsp.
	\label{eq:prop-bias-2}
\end{align}
Therefore by \eqref{eq:bound-stationary-covariance} and \eqref{eq:prop-bias-2},
\begin{align*}
	 \sum_{s=0}^{b_n-1}\Bigl| \PE[x_0]{A_{n,1}(s)} - \covcoeff[\pi]{h}{s} \Bigr|
	&\leq n^{-1} \sum_{s=0}^{b_n-1}\sum_{k=0}^{n-s-1} \left| \PE[x_0]{ \th(X_k) \th(X_{k+s}) }  - \covcoeff[\pi]{h}{s} \right| + n^{-1} \sum_{s=0}^{b_n-1}  s|\covcoeff[\pi]{h}{s}| \\
	&\leq
	\frac{ \varsigma^{3/2} \driftfunc(x_0)\VnormFunc[2]{\th}{\driftfunc^{1/2}}}{n}
	\sum_{s=0}^{b_n-1}\sum_{k=0}^{n-s-1} \rho^{s/2+k}
	+
	\frac{ \varsigma^{1/2} \pi(\driftfunc)\VnormFunc[2]{\th}{\driftfunc^{1/2}}}{n}
	\sum_{s=0}^{b_n-1} s\rho^{s/2}	\\
	&\leq
	\frac{ \varsigma^{3/2} \driftfunc(x_0)\VnormFunc[2]{\th}{\driftfunc^{1/2}}}{n(1-\rho)(1-\rho^{1/2})}
	+
	\frac{ b_n\varsigma^{1/2} \pi(\driftfunc)\VnormFunc[2]{\th}{\driftfunc^{1/2}}}{n (1-\rho^{1/2})}  \eqsp.
\end{align*}
Note that  \eqref{eq:prop-bias-2} also yields
\begin{align}
	\PE[x_0]{{\pi}^2_n(\th)}
	&\leq 2n^{-2}  \sum_{k=0}^{n-1} \sum_{s=0}^{n-k-1} \bigl| \PE[x_0]{\th(X_k) \th(X_{k+s})} \bigr| \nonumber\\
	&\leq 2n^{-2}\sum_{k=0}^{n-1} \sum_{s=0}^{n-k-1}  \varsigma^{3/2}\rho^{s/2+k} \driftfunc(x_0) \VnormFunc[2]{\th}{\driftfunc^{1/2}}
	+  2n^{-2}\sum_{k=0}^{n-1} \sum_{s=0}^{n-k-1} \bigl|\covcoeff[\pi]{h}{s} \bigr|\nonumber\\
	&\leq
	\frac{2 \varsigma^{3/2}\driftfunc(x_0) \VnormFunc[2]{\th}{\driftfunc^{1/2}}}{n^2 (1-\rho)(1-\rho^{1/2})}
	+  \frac{2 \varsigma^{1/2}\pi(\driftfunc)\VnormFunc[2]{\th}{\driftfunc^{1/2}}}{n(1-\rho^{1/2})}\eqsp.
	\label{eq:prop-bias-3}
\end{align}
We now turn to $A_{n,2}(s)$.
By the Cauchy-Schwarz inequality and similar argument to \eqref{eq:prop-bias-3},
\begin{align*}
	\bigl| \PE[x_0]{A_{n,2}(s)} \bigr|
	&\leq \frac{\sqrt{2}}{n} \left\{ \PE[x_0]{\pi^2_n(\th)  }\right\}^{1/2}
	\left\{ \PElr[x_0]{\left( \sum\nolimits_{k=0}^{n-s-1} \th(X_k) \right)^2} + \PElr[x_0]{\left( \sum\nolimits_{k=s}^{n-1} \th(X_k) \right)^2}  \right\}^{1/2}\\
	&\leq  \frac{4\sqrt{2}\varsigma^{3/2}\driftfunc(x_0) \VnormFunc[2]{\th}{\driftfunc^{1/2}}}{n^2 (1-\rho)(1-\rho^{1/2})}
	+  \frac{4\sqrt{2} \varsigma^{1/2}\pi(\driftfunc)\VnormFunc[2]{\th}{\driftfunc^{1/2}} }{n(1-\rho^{1/2})}\eqsp.
\end{align*}
This gives
\begin{align*}
 \sum_{s=0}^{b_n-1}\bigl| \PE[x_0]{A_{n,2}(s)}\bigr| \leq  \frac{4\sqrt{2} b_n\varsigma^{3/2}\driftfunc(x_0) \VnormFunc[2]{\th}{\driftfunc^{1/2}} }{n^2 (1-\rho)(1-\rho^{1/2})}
	+  \frac{4\sqrt{2}b_n \varsigma^{1/2}\pi(\driftfunc)\VnormFunc[2]{\th}{\driftfunc^{1/2}} }{n(1-\rho^{1/2})}\eqsp.
\end{align*}
Finally, for $A_{n,3}(s)$ it follows from \eqref{eq:prop-bias-3} that
\begin{align*}
 	\sum_{s=0}^{b_n-1}\bigl| \PE[x_0]{A_{n,3}(s)}\bigr| \leq
 	\frac{2b_n \varsigma^{3/2}\driftfunc(x_0)\VnormFunc[2]{\th}{\driftfunc^{1/2}} }{n^2 (1-\rho)(1-\rho^{1/2})}
	+  \frac{2b_n \varsigma^{1/2}\pi(\driftfunc)\VnormFunc[2]{\th}{\driftfunc^{1/2}} }{n(1-\rho^{1/2})}\eqsp.
\end{align*}
Substituting these bounds into \eqref{eq:prop-bias-4} we obtain
\begin{align}\label{eq:prop-bias-s1}
	\sum_{s=0}^{b_n-1} w_n(s) \bigl| \PE[x_0]{\ecovcoeff[n]{\th}{s}} - \covcoeff[\pi]{h}{s} \bigr|
	\leq
 	\frac{9b_n \varsigma^{3/2}\driftfunc(x_0)\VnormFunc[2]{\th}{\driftfunc^{1/2}}}{n^2 (1-\rho)(1-\rho^{1/2})}
	+  \frac{9b_n \varsigma^{1/2}\pi(\driftfunc)\VnormFunc[2]{\th}{\driftfunc^{1/2}} }{n(1-\rho^{1/2})}\eqsp.
\end{align}
Collecting the estimates \eqref{eq:prop-bias-s2}, \eqref{eq:prop-bias-s3}, \eqref{eq:prop-bias-s1} and
substituting them into \eqref{eq:prop-bias-decomp} we conclude
\begin{align*}
	\Bigl|\PE[x_0]{\SV{h}}-V_{\infty}(h)\Bigr|
	&\leq
	\frac{\varsigma^{1/2}\pi(\driftfunc)\VnormFunc[2]{\th}{\driftfunc^{1/2}}}{1-\rho^{1/2}}
	\biggl( \frac{9\varsigma\driftfunc(x_0)}{(1-\rho)\pi(\driftfunc)}\frac{b_n}{n^2} +\frac{9b_n}{n} + 2\rho^{b_n/2}  \biggr)\eqsp,
\end{align*}
which is our claim.
If additionally $n \ge \varsigma\driftfunc(x_0)/((1-\rho)\pi(\driftfunc))$ then
\begin{align*}
	\Bigl|\PE[x_0]{\SV{h}}-V_{\infty}(h)\Bigr|
	&\leq
	\frac{20 \varsigma^{1/2}\pi(\driftfunc)\VnormFunc[2]{\th}{\driftfunc^{1/2}}}{(1-\rho^{1/2})}	
	\biggl( \frac{b_n}{n} \vee \rho^{b_n/2}  \biggr)\eqsp,
\end{align*}
and the proof is complete.
\end{proof}

\subsection{Proof of Theorem \ref{th:main_slow}}
For simplicity of notation, without loss of generality, we assume that
 functions $h\in\H$ are zero-mean, since, by definition, $\SV{h} = \SV{h - \pi(h)}$ and hence $h$ may be replaced by $\th = h - \pi(h)$
which also satisfies assumptions imposed on $h$.
Further, we write  \(\ESV{h}= \PE[x_0]{\SV{h}}\) and set
\begin{equation}
\label{eq:definition-H-M}
	H \eqdef \sup_{h \in \H} \|h\|_{\ltwo(\pi)}
	\quad\text{and}\quad
	M \eqdef \sup_{h \in \H} \|h\|_{\driftfunc^{1/2}}.
\end{equation}
Without loss of generality we may assume that $M < \infty$ since otherwise the statement of the theorem is obviously true.
\newline\newline\noindent
It follows from \Cref{prop:bias} that if $n \ge \varsigma\driftfunc(x_0)/((1-\rho)\pi(\driftfunc))$ then
\begin{equation*}
	\sup_{h\in\H}
	\bigl|\AV{h}-\ESV{h}\bigr| \lesssim G  \bigg ( \frac{b_n}{n} \vee \rho^{b_n/2} \bigg),
	\quad\text{where}\ \
	G \eqdef \frac{\varsigma^{1/2}M^2\pi(\driftfunc)}{1-\rho^{1/2}}.
\end{equation*}
Hence
\begin{equation}\label{eq:er_bound1}
	\AV{\hh_\eps} - \inf_{h\in\H} \AV{h}
	\leq
	\ESV{\hh_\eps} - \inf_{h\in\H} \ESV{h}
	+ 2 G  \bigg ( \frac{b_n}{n} \vee \rho^{b_n/2} \bigg).
\end{equation}
We are reduced to bounding the difference $\ESV{\hh_\eps} - \inf_{h\in\H} \ESV{h}$.
Let us denote by $h^*$  a function in $\H$ minimizing $\ESV{h}$, that is,
\begin{align}\label{eq:bvm}
	h^* \eqdef \argmin_{h \in \H} \ESV{h}.
\end{align}
We  assume that such a minimizer exists (a simple modification of the proof
is possible if $h^*$ is an approximate solution of \eqref{eq:bvm}).
Let also $\sh_\eps\in\mathcal{H}_\eps$ be the closest point to $\sh\in\H$ in $\ltwo(\pi)$.
By the definition of $\hh_\eps$, $\SV{\hh_\eps}- \SV{\sh_\eps}<0$.
We have
\begin{align}\label{th 2 all summands}
	&\ESV{\hh_\eps} - \ESV{\sh}
		\leq
	\ESV{\hh_\eps}- \ESV{\sh} -
	\bigl(\SV{\hh_\eps} - \SV{\sh_\eps}\bigr) \nonumber\\
		&\qquad\qquad=
	\ESV{\hh_\eps}- \ESV{\sh} - \bigl(\SV{\hh_\eps} - \SV{\sh} \bigr) +
	\bigl(\SV{\sh_\eps} - \SV{\sh}\bigr) \nonumber \\
		&\qquad\qquad\leq
	{\sup_{h\in\mathcal{H}_\eps} \Bigl\{\ESV{h}  -  \SV{h} \Bigr\}}
	+ {\bigl(\SV{\sh} -\ESV{\sh} \bigr)}
	+  {\bigl(\SV{\sh_\eps} - \SV{\sh}\bigr)}.
\end{align}
It remains to bound each summand in the right hand side of the decomposition~\eqref{th 2 all summands}.
To do this, we  need an exponential concentration for $\SV{h}$.
Let us remind that we consider two cases, Lipschitz and bounded functions $h\in\H$.
Depending on the case we consider, it follows from
\Cref{th:conc_var} (equation  \eqref{eq:conc_var_slow}) or \Cref{th:conc_var2} that,
for a fixed $\tau>0$, for all $t<\tau$, and all $h\in\H$,
\begin{align}\label{eq:conc_var_proof}
	\P_{x_0}\Bigl(\bigl|\SV{h}-\ESV{h}\bigr|>t\Bigr)
	\leq {2}
	\exp\biggl(-\frac{ t^2 n}{ c K_\tau^2 b_n^2} \biggr),
\end{align}
where $c>0$ is an absolute constant
and
\begin{align*}
	K_\tau^2 \eqdef \frac{\TC L^2}{(1-r)^2}\left( H^2+ \frac{ \varsigma M^2\driftfunc(x_0)}{1-\rho} + \frac{\tau}{b_n}\right)
	\quad \text{or} \quad
	K_\tau^2 \eqdef \beta^2 B^4
\end{align*}	
in the Lipschitz and bounded cases correspondingly.
Note that $K_\tau$ does not depend on $\tau$ in the bounded case.
The value of $\tau>0$ is specified later.
For the first summand in the decomposition~\eqref{th 2 all summands},
using the union bound and the concentration inequality \eqref{eq:conc_var_proof},
we obtain
\begin{align*}
	\P_{x_0} \left(\sup_{h\in\mathcal{H}_\eps} \Bigl\{\ESV{h}  -  \SV{h}\Bigr\}>t\right)
		&\leq
	|\mathcal{H}_\eps|	
	\sup_{h\in\mathcal{H}_\eps}
	\P_{x_0} \Bigl(\ESV{h}  -  \SV{h}>t\Bigr)\\
	&\leq 2|\mathcal{H}_\eps|	
	\sup_{h\in\mathcal{H}_\eps}
	\exp\biggl(-\frac{n t^2}{cK^2_{\tau}b_n^2} \biggr).
\end{align*}
For any $\eps \ge \gamma_{\ltwo(\pi)}(\mathcal{H},n) $
it holds \(|\mathcal{H}_\eps|\leq  \rme^{{n}\eps^2}\).
We can select $t = \sqrt{c}K_{\tau} b_n\bigl(\eps + n^{-1/2} \log^{1/2}(8/\delta)\bigr)$
to obtain
\begin{align}\label{th 2 summand 1}
	\P_{x_0} \left(\sup_{h\in\mathcal{H}_\eps} \Bigl\{\ESV{h}  -  \SV{h}\Bigr\}>t\right)
		\leq \delta/4 \eqsp.
\end{align}
In the same manner we can bound the second term in the right hand side
of the decomposition~\eqref{th 2 all summands}.
For \(t = \sqrt{c}K_{\tau} b_n n^{-1/2} \log^{1/2}(8/\delta) \),
it holds
\begin{align}\label{th 2 summand 2}
\P_{x_0} \left(\SV{\sh}-\ESV{\sh} >t\right) \leq \delta/4 \eqsp.
\end{align}
It remains to estimate the last summand in~\eqref{th 2 all summands}. This term is small since $\sh_\eps$ is $\eps$-close to $\sh$ in $\ltwo(\pi)$. We represent this summand in the following way
\begin{align*}
	\SV{\sh} - \SV{\sh_\eps}
	&=
	\SV{\sh} - \SV{\sh_\eps} - \Bigl[\ESV{\sh} - \ESV{\sh_\eps}\Bigr] + \Bigl[\ESV{\sh} - \ESV{\sh_\eps}\Bigr] .
\end{align*}
Now we have by the union bound and the concentration result \eqref{eq:conc_var_proof},
\begin{align}\label{th 2 summand 3_1}
	\P_{x_0}  \Bigl( \SV{\sh} - \SV{\sh_\eps} - \ESV{\sh} - \ESV{\sh_\eps} > t \Bigr) \leq \frac{\delta}{2}
\end{align}
for $t = \sqrt{c}K_{\tau} b_n n^{-1/2}\log^{1/2}(8/\delta)$.
Furthermore, let us represent $\SV{h}$ as a quadratic form $Z_n(h)^{\T}A_nZ_n(h)$
with $\|A_n\|\leq 2b_n/n$, see \Cref{sec:reprsv} for details.
It holds by the Cauchy-Schwarz inequality
\begin{align*}
	\ESV{\sh} - \ESV{\sh_\eps}
		&=
	\PE[x_0]{Z_n(h^{*})^{\top}A_nZ_n(h^{*}) - Z_n(h_\eps^{*})^{\top}A_nZ_n(h_\eps^{*})}\\
		&=
	\PE[x_0]{Z_n(h^{*})^{\top}A_n\bigl(Z_n(h^{*})-Z_n( h_\eps^{*})\bigr) + \bigl(Z_n( h^{*})-Z_n(h_\eps^{*})\bigr)^{\top}A_nZ_n(h_\eps^{*})}\\
	&\leq
		\| A_n \| \, \left(\E_{x_0}\bigl[\|Z_n(h^{*})-Z_n(h_\eps^{*})\|^2\bigr] \right)^{1/2}
		\cdot2\sup_{h\in\H} \left( \E_{x_0}\|Z_n(h)\|^2\right)^{1/2}.
\end{align*}
Let $R^2 \eqdef \varsigma M^2  \driftfunc(x_0)(1-\rho)^{-1}$. Then
\Cref{lem:evar} yields
\begin{align}\label{th 2 summand 3_2}
	\ESV{\sh} - \ESV{\sh_\eps}
	\leq
	{4b_n} \left(\eps + \frac{\sqrt{2}R}{\sqrt{n}} \right)\left(H + \frac{R}{\sqrt{n}}\right).
\end{align}
Combining the bounds \eqref{th 2 summand 1}, \eqref{th 2 summand 2}, \eqref{th 2 summand 3_1}, and \eqref{th 2 summand 3_2} for all summands and substituting them into \eqref{th 2 all summands}, we can assert that
for $\eps \ge \gamma_{\ltwo(\pi)}(\mathcal{H},n),$
with probability at least $1-\delta$,
\begin{align*}
	\ESV{\hh_\eps} - \ESV{\sh}
	&\lesssim
	\left(K_{\tau}+H+ \frac{R}{\sqrt{n}}\right) b_n\eps
	+ \frac{b_n R}{\sqrt{n}}\left(H+ \frac{R}{\sqrt{n}}\right)
	+K_{\tau} \frac{b_n{\log^{1/2}(\frac{8}{\delta})}}{\sqrt{n}},
\end{align*}
where \(\lesssim\) stands for inequality up to an absolute constant.
Now we can set $\tau$ to be an upper bound for the chosen
$t$, namely, $\tau = \sqrt{c}K_{\tau} b_n\bigl(\eps + n^{-1/2} \log^{1/2}(8/\delta)\bigr)$.
In the bounded case, $K_{\tau}$ does not depend on $\tau$, but in the Lipschitz case
this choice leads to a quadratic equation
\[
	K_{\tau}^2 = \frac{\TC L^2}{(1-r)^2} \biggl(H^2 +\frac{R^2}{n} +K_{\tau} \sqrt{c}\biggl(\eps +\frac{\log^{1/2}(\fraca{8}{\delta})}{\sqrt{n}}\biggr)\biggr),
\]
For a large $c>0$, this quadratic equation always has a solution which may be written as
$K_{\tau}  \lesssim   \frac{\sqrt{\TC}L}{1-r} \bigl(H+Rn^{-1/2} +\bigl(\eps + n^{-1/2}\log^{1/2}(\fraca{8}{\delta})\bigr)\bigr)$.
Let $n \ge n_0$, where $n_0$ satisfies
\[
	n_0 \ge \frac{\varsigma\driftfunc(x_0)}{(1-\rho)\pi(\driftfunc)},
	\quad
	n_0 \ge \frac{\max\{R^2,\log(\fraca{8}{\delta})\}}{H^2},
	\quad\text{and}\quad
	\gamma_{\ltwo(\pi)}(\mathcal{H},n_0) \leq H.
\]
Then $K_{\tau}  \lesssim  \sqrt{\TC}HL/(1-r)$ (in the Lipschitz case) and $H+Rn^{-1/2} \lesssim H$.
We set $\eps = \gamma_{\ltwo(\pi)}(\mathcal{H},n)$ and obtain
\begin{align*}
	\ESV{\hh_\eps} - \ESV{\sh}
	&\lesssim
	(K_{\tau}  + H) b_n\gamma_{\ltwo(\pi)}(\mathcal{H},n)
	+ (K_{\tau} +HR)\frac{b_n\log(\fraca{1}{\delta})}{\sqrt{n}}.
\end{align*}
Substituting this into \eqref{eq:er_bound1} and taking \(b_n=2(\log(\fraca{1}{\rho}))^{-1}\log(n)\), we conslude
\begin{align*}
	\AV{\hh_\eps} - \inf_{h\in\H} \AV{h}
	&\lesssim
	 (\log(\fraca{1}{\rho}))^{-1}(K_{\tau}  + H) \log(n) \gamma_{\ltwo(\pi)}(\mathcal{H},n)\\
	&\quad
	+ (\log(\fraca{1}{\rho}))^{-1} \left(K_{\tau} +\frac{\varsigma^{1/2} MH \driftfunc(x_0)}{(1-\rho)^{1/2}}+\frac{\varsigma^{1/2} M^2 \pi(\driftfunc) }{\sqrt{n}(1-\rho^{1/2})}
	\right)\frac{\log(n)\log(\fraca{1}{\delta})}{\sqrt{n}},
\end{align*}
Note that $H \lesssim L$ or $H \lesssim B$ in the Lipschitz and bounded cases correspondingly,
and $H \lesssim H^2 \lesssim K_\tau$ in both cases. Taking $K^2=K_{\tau}$ and simplifying last expression, we get the desired conclusion.

\subsection{Proof of Theorem \ref{th:main_fast}}
As above, we assume that functions $h\in\H$ are zero-mean and set \(\ESV{h}= \PE[x_0]{\SV{h}}\).
It follows from \Cref{prop:bias} that if $n \ge \varsigma\driftfunc(x_0)/((1-\rho)\pi(\driftfunc))$ then
\begin{equation*}
	\sup_{h\in\H}
	\bigl|\AV{h}-\ESV{h}\bigr| \lesssim G  \bigg ( \frac{b_n}{n} \vee \rho^{b_n/2} \bigg),
	\quad\text{where}\ \
	G \eqdef \frac{\varsigma^{1/2}M^2\pi(\driftfunc)}{1-\rho^{1/2}} \,,
\end{equation*}
where $M$ is defined in \eqref{eq:definition-H-M}. Hence
\begin{equation}\label{eq:er_bound2}
	\AV{\hh_\eps}
	\leq
	\ESV{\hh_\eps}
	+ G  \bigg ( \frac{b_n}{n} \vee \rho^{b_n/2} \bigg).
\end{equation}
We are reduced to bounding $\ESV{\hh_\eps}$.
Let us denote by $h^*$  a constant function in $\H$ exising by assumption.
Let also $\sh_\eps\in\mathcal{H}_\eps$ be the closest point to $\sh$ in $\mathcal{H}_\eps$ in $\ltwo(\pi)$.
By the definition of $\hh_\eps$, $\SV{\hh_\eps}-\SV{\sh_\eps}<0$.
We have for any $c>0$,
\begin{align}
\label{th1 all summands}
	\ESV{\hh_\eps}
		&\leq
	\ESV{\hh_\eps} - (1+c) \bigl(\SV{\hh_\eps}-\SV{\sh_\eps}\bigr)
		=
	\ESV{\hh_\eps} - (1+c)\SV{\hh_\eps} + (1+c)\SV{\sh_\eps}\nonumber\\
		&\leq
	{\sup_{h\in\mathcal{H}_\eps} \Bigl\{\ESV{h} - (1+c)\SV{h} \Bigr\}}
	+  {(1+c)\SV{\sh_\eps}}.
\end{align}
We take $c=1$ and bound the two summands in the right hand side of~\eqref{th1 all summands} separately.
To do this, we  need an exponential concentration for $\SV{h}$.
It follows from \Cref{th:conc_var} (equation  \eqref{eq:conc_var_fast}) that,
for all $t>0$ and for all $h\in\H$,
\begin{align}\label{eq:conc_var_proof2}
	\P_{x_0}\Bigl(\bigl|\SV{h} - \ESV{h} \bigr|>t\Bigr)
	\leq {2}
	\exp\Biggl(-\frac{n t^2}{ c K^2\, b_n\bigl(\ESV{h} + t\bigr)} \Biggr),
\end{align}
where $c>0$ is some universal constant, $K^2=\TC L^2/(1-r)^2$, and
$b_n$ is the size of the lag window.
For the first summand in the right hand side of the decomposition \eqref{th1 all summands},
using the union bound and the concentration inequality \eqref{eq:conc_var_proof2}, we obtain
\begin{align*}
	&\P_{x_0}\biggl(\sup_{h\in\mathcal{H}_\eps}
	\Bigl\{\ESV{h} - 2\SV{h}\Bigr\}>t\biggr)
	\leq
	|\mathcal{H}_\eps| \sup_{h\in\mathcal{H}_\eps }
	\P_{x_0}\biggl(\ESV{h} - 2\SV{h}>t\biggr)\\
	&\qquad\qquad\qquad\leq
	2|\mathcal{H}_\eps| \sup_{h\in\mathcal{H}_\eps }
	\exp\biggl(-\frac{ n \left(t+\SV{h}\right)}{cK^2b_n} \biggr)
	\le 2|\mathcal{H}_\eps|
	\exp\biggl(-\frac{ nt}{cK^2b_n} \biggr),
\end{align*}
where the last inequality holds since $\SV{h}\ge0$.
For any $\eps \ge \gamma_{\ltwo(\pi)}(\mathcal{H},n) $
it holds \(|\mathcal{H}_\eps|\leq e^{n\eps^2}\).
Hence we can select $t = cK^2 b_n\left(\eps^2 + n^{-1} \log(4/\delta)\right)$
to obtain
\begin{equation}
\label{th 1 summand 1}
	\P_{x_0}\biggl(\sup_{h\in\mathcal{H}_\eps}
	\Bigl\{\ESV{h} - 2\SV{h}\Bigr\}>t\biggr)
	\leq  \delta/2.
\end{equation}
The second term in~\eqref{th1 all summands} is small since $\sh_\eps$ is $\eps$-close to $\sh$ in $\ltwo(\pi)$.
First we note that
\begin{align*}
	\SV{\sh_\eps}=\SV{\sh_\eps}-2\ESV{\sh_\eps}+\ESV{\sh_\eps}.
\end{align*}
By the union bound and the concentration inequality \eqref{eq:conc_var_proof2}, we have
\begin{align}
\label{th 1 summand 2}
	\P_{x_0} \Bigl(\SV{\sh_\eps} - 2 \ESV{\sh_\eps} > t \Bigr)
	\leq
	2\exp\biggl(-\frac{n\left(t+\SV{\sh_\eps}\right)}{cK^2b_n} \biggr)
	\leq
	2\exp\biggl(-\frac{nt}{cK^2b_n} \biggr).
\end{align}
Hence for $t = cK^2 b_n n^{-1} \log(4/\delta)$ this probability is bounded by $\delta/2$. Furthermore, let us represent $\SV{h}$ as a quadratic form $Z_n(h)^{\T}A_nZ_n(h)$
(see \Cref{sec:reprsv} for details). By assumption, $\sh$ is a constant function, and hence
$A_nZ_n(\sh)$ is the zero vector. Since $\|A_n\|\leq 2b_n/n$ (see  \Cref{lem:opsv}), it holds
\begin{align}
	\ESV{\sh_\eps}
	&=
	\PE[x_0]{Z_n(\sh_\eps)^\top A_n Z_n(\sh_\eps)}
	=
	\PE[x_0]{(Z_n(\sh_\eps)-Z_n(\sh))^{\top}A_n(Z_n(\sh_\eps)-Z_n(\sh))} \nonumber\\
	&\leq \frac{2b_n}{n}  \PE[x_0]{\|Z_n(\sh_\eps)-Z_n(\sh)\|^2}.
	\label{eq:bound-PE_V_n}
\end{align}
Let $R^2 \eqdef \varsigma M^2 \driftfunc(x_0)(1-\rho)^{-1}$. Then \Cref{lem:evar} yields
\begin{align}
\label{th 1 summand 3}
	\ESV{\sh_\eps}
	\leq
	2b_n\eps^2+ 8R^2\frac{b_n}{n} .
\end{align}
Combining the bounds~\eqref{th 1 summand 1},~\eqref{th 1 summand 2} and~\eqref{th 1 summand 3}  for all summands and substituting them into \eqref{th1 all summands}, we can assert that
for $\eps \ge \gamma_{\ltwo(\pi)}(\mathcal{H},n)$,
with probability at least $1-\delta$, we have
\begin{align*}
	\ESV{\hh_\eps}
	\lesssim
	K^2 b_n\eps^2+(K^2+R^2)\frac{b_n\log(\frac{4}{\delta})}{n}.
\end{align*}
Substituting this bound into \eqref{eq:er_bound2} with $\eps = \gamma_{\ltwo(\pi)}(\mathcal{H},n)$
and $b_n = 2(\log(\fraca{1}{\rho}))^{-1}\log(n)$ yields
\begin{align*}
	\AV{\hh_\eps}
	&\lesssim
	\frac{K^2}{\log(\fraca{1}{\rho})}
	 \log(n) \gamma^2_{\ltwo(\pi)}(\mathcal{H},n)
	+\frac{K^2+R^2+G}{\log(\fraca{1}{\rho})} \cdot \frac{\log(n)\log(\frac{1}{\delta})}{n}\\
	&\lesssim
	\frac{\TC L^2}{(1-r)^2\log(\fraca{1}{\rho})} \log(n) \gamma^2_{\ltwo(\pi)}(\mathcal{H},n)\\
	&\qquad\qquad\qquad\qquad\qquad
	+\left(\frac{\TC L^2}{(1-r)^2\log(\fraca{1}{\rho})}+\frac{\varsigma M^2(\pi(\driftfunc) + \driftfunc(x_0))}{(1-\rho)^{1/2}\log(\fraca{1}{\rho})}
	\right)\frac{\log(n)\log(\frac{1}{\delta})}{n},
\end{align*}
which is the desired conclusion.

\section{Tables and Figures}
\label{sec:Fig_Tables}
\begin{table}[!htp]\centering
\rat{1.1}
\begin{minipage}{0.88\linewidth}\centering
\captionsetup{font=small}
\caption{Experimental setup details.}\label{tab:Table_setup}
\resizebox{0.91\textwidth}{!}{\centering
\begin{tabular}{@{}l c  c c c c c c@{}}
\toprule
Experiment& $n_{\text{burn}}$ & $n_{\text{train}}$ &  $n_{\text{test}}$ & $\gamma_{\text{ULA}}$ & $\gamma_{\text{MALA}}$ & $\gamma_{\text{RWM}}$ & $b_n$\\ \toprule 
GMM, $\E_\pi[X_2]$,   $\Sigma = \Id$&   $10^4$&$10^5$&$10^5$&$0.1$&$1.0$&$0.5$&$50$\\   
GMM, $\E_\pi[X_2]$,   $\Sigma = \Sigma_0$    &   $10^4$&$10^5$&$10^5$&$0.1$&$0.2$&$0.1$&$50$    \\   
GMM, $\E_\pi[X_2^2]$, $\Sigma = \Id$ & $10^4$&$10^5$&$10^5$&$0.1$&$1.0$&$0.5$&$50$  \\   
GMM, $\E_\pi[X_2^2]$, $\Sigma = \Sigma_0$ & $10^4$&$10^5$&$10^5$&$0.1$&$0.1$&$0.1$&$50$    \\   
Banana-shape, $d = 2$&$10^5$&$10^6$&$10^6$&$0.01$&$0.5$&$0.5$&$300$    \\ 
Banana-shape, $d = 8$&$10^5$&$10^6$&$10^6$&$0.01$&$0.2$&$0.1$&$300$   \\ 
Logistic and probit regression, Pima&$10^3$&$10^4$&$10^4$&$0.1$&$0.5$&$0.5$&   $10$   \\ 
Logistic regression, EEG\hspace{1pt}&	$10^3$&$10^4$&$10^4$ &$0.1$&$1.0$&$0.1$&$10$\\ 
Probit regression, EEG&$10^3$&$10^4$&$10^4$ &$0.1$&$0.5$&$0.1$&$10$\\ 
Van der Pol oscillator&$10^2$&$10^3$&$10^3$ & $-$&$10^{-3}$&$-$&$10$\\
Lotka-Volterra model&$10^3$&$10^4$&$10^4$ & $-$&$\num{5e-6}$&$-$&$10$\\
\bottomrule
\end{tabular}}
\end{minipage}
\end{table}

\begin{table}\centering
\rat{1.1}
\begin{minipage}{0.88\linewidth}\centering
\captionsetup{font=small}
\caption{Variance Reduction Factors in probit regression, average test likelihood.}\label{tab:Table_probit}
\resizebox{0.91\textwidth}{!}{
\begin{tabular}{@{}lcccccccc@{}}\toprule
 & \phantom{abc} &  \multicolumn{3}{c}{PIMA dataset} & \phantom{abc} & \multicolumn{3}{c}{EEG dataset}  \\
\cmidrule{3-5} \cmidrule{7-9}
Method & \phantom{abc} & ULA  & MALA & RWM  &  & ULA  & MALA & RWM  \\
\toprule
ESVM-1 &&   $263.2$ & $419.7$ & $251.4$ &&  $1317.0$ & $1515.0$ &  $938.5$\\
EVM-1 &&  $270.1$ & $430.1$ & $261.6$  &&  $1331.6$ &  $1572.7$ & $948.1$ \\ \midrule
ESVM-2  &&  $\bf{26835.7}$ & $\bf{55373.7}$ & $\bf{28905.0}$ &&  $\bf{45059.2}$  & $45964.5$ & $\bf{34957.1}$ \\
EVM-2  && $6660.7$ & $29710.4$ & $14187.1$ && $29620.4$    &    $\bf{71095.6}$    & $6340.1$ \\
\bottomrule
\end{tabular}}
\end{minipage}
\end{table}

\begin{table}[!htb]\centering
\rat{1.1}
\begin{minipage}{0.88\linewidth}\centering
\captionsetup{font=small}
\caption{Variance Reduction Factors for Van der Pol oscillator, posterior mean estimation.}\label{tab:Table_van_der_pole}
\resizebox{0.62\textwidth}{!}{
\begin{tabular}{@{}lcccc@{}}\toprule
Method 	&\phantom{abc} & $1$st order CV  & $2$nd order CV   & $3$rd order CV  \\ 
\toprule
ESVM 	&&  $30.7$         & $\bf{49.1}$		& $\bf{243.2}$ \\ \hline
EVM   	&& $\bf{33.9}$      & $44.1$   		& $183.7$\\ 
\bottomrule
\end{tabular}}
\end{minipage}
\end{table}

\begin{figure}[!htb]
\captionsetup{font=small}
\caption{Estimation of $\E_\pi[X^2_2]$ in GMM with \(\Sigma = \Id\). Left figure: boxplot for ULA estimates compared to the corresponding boxplots for EVM and ESVM estimates. Next three figures: boxplots for EVM and ESVM estimates for ULA, MALA, and RWM with second-order control variates being used.}
\label{fig:gmm_white_2nd}
\minipage{0.24\textwidth}
  \includegraphics[width=\linewidth]{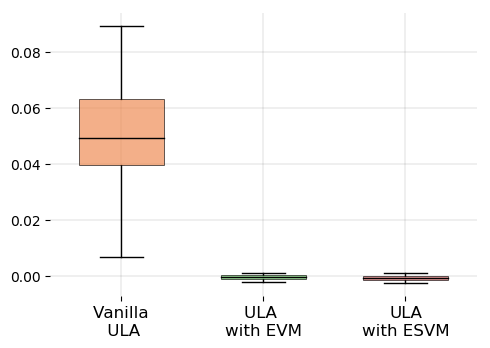}
\endminipage\hfill
\minipage{0.24\textwidth}
  \includegraphics[width=\linewidth]{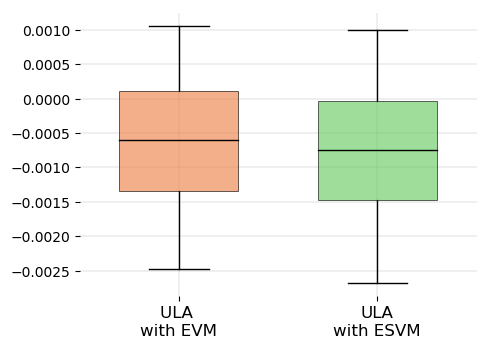}
\endminipage\hfill
\minipage{0.24\textwidth}%
  \includegraphics[width=\linewidth]{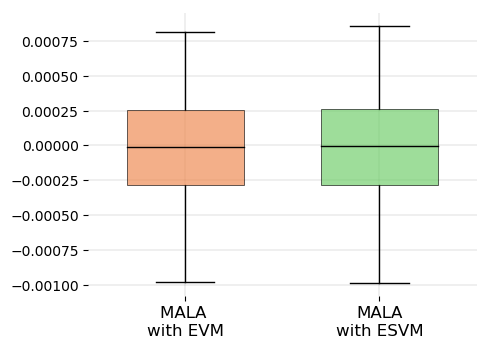}
\endminipage
\minipage{0.24\textwidth}%
  \includegraphics[width=\linewidth]{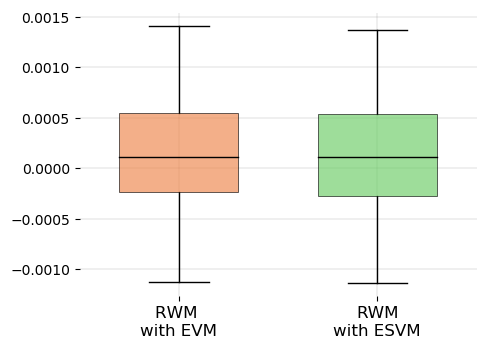}
\endminipage
\end{figure}

\begin{figure}[!htb]
\captionsetup{font=small}
\caption{Estimation of $\E_\pi[X^2_2]$ in GMM with \(\Sigma = \Sigma_0\). Left figure: boxplot for ULA estimates compared to the corresponding boxplots for EVM and ESVM estimates. Next three figures: boxplots for EVM and ESVM estimates for ULA, MALA, and RWM with second-order control variates being used.}
\label{fig:gmm_coloured_2nd}
\minipage{0.32\textwidth}
  \label{fig:gmm_coloured_2nd_ula}
  \includegraphics[width=\linewidth]{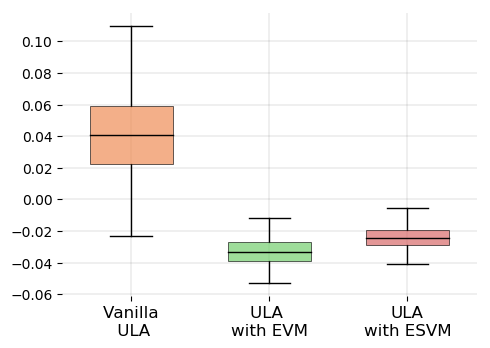}
\endminipage\hfill
\minipage{0.32\textwidth}
  \includegraphics[width=\linewidth]{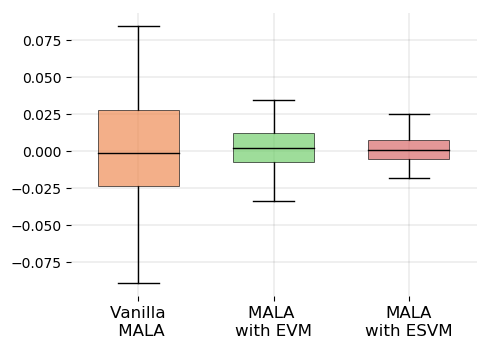}
\endminipage\hfill
\minipage{0.32\textwidth}%
  \includegraphics[width=\linewidth]{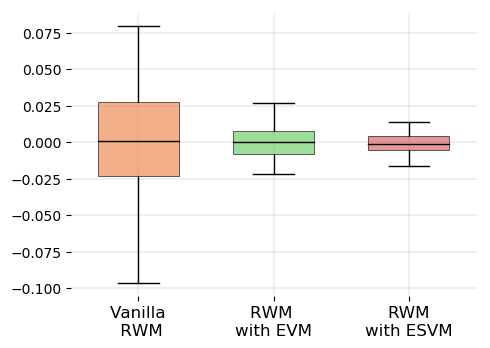}
\endminipage
\end{figure}

\begin{figure}[!htb]
\captionsetup{font=small}
\caption{Estimation of the average test likelihood in probit regression for the Pima dataset. Left figure: boxplot for ULA estimates compared to the corresponding boxplots for EVM and ESVM estimates. Next three figures: boxplots for EVM and ESVM estimates for ULA, MALA, and RWM with second-order control variates being used.}
\label{fig:pima_probit}
\minipage{0.24\textwidth}
  \includegraphics[width=\linewidth]{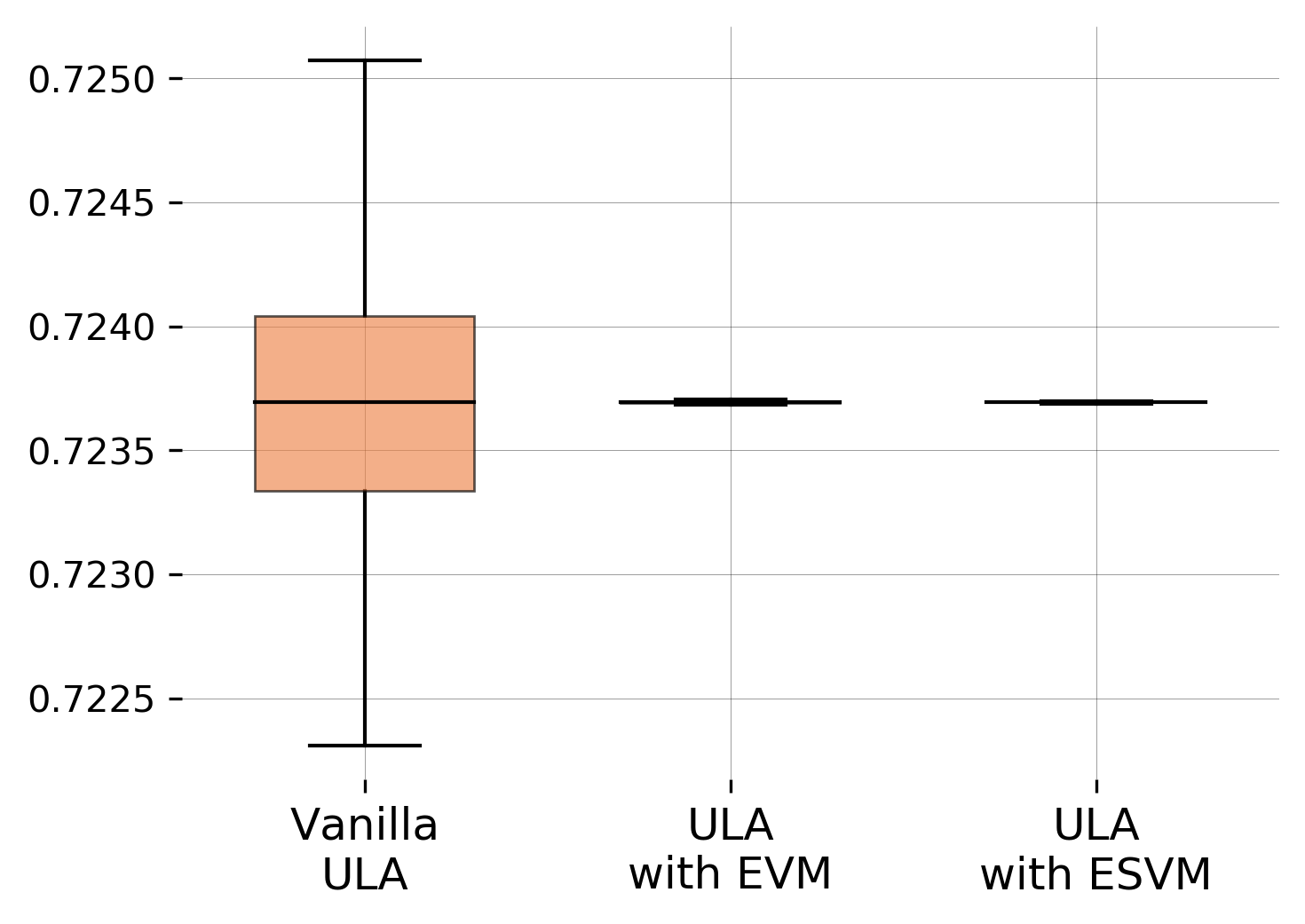}
\endminipage\hfill
\minipage{0.24\textwidth}
  \includegraphics[width=\linewidth]{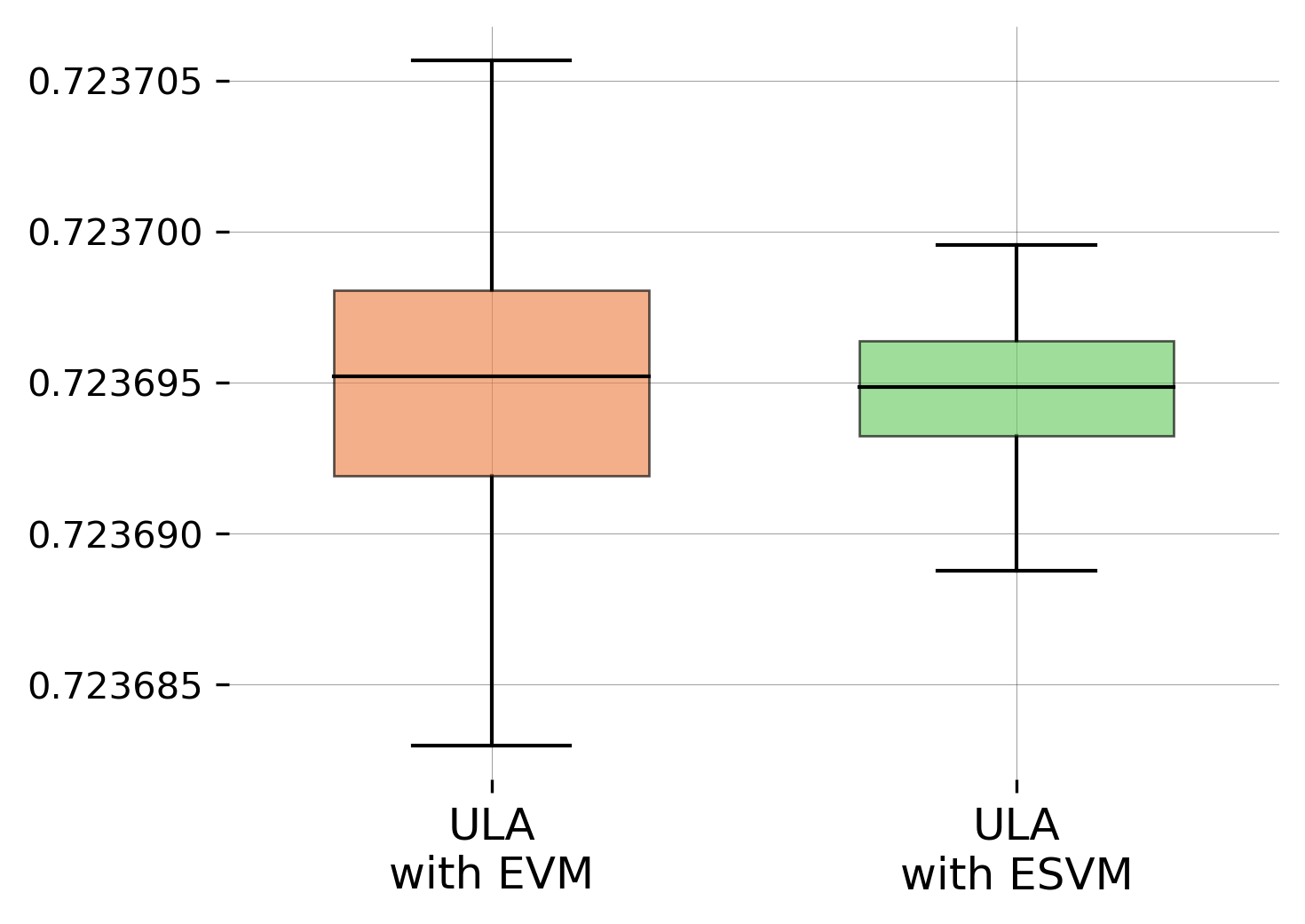}
\endminipage\hfill
\minipage{0.24\textwidth}%
  \includegraphics[width=\linewidth]{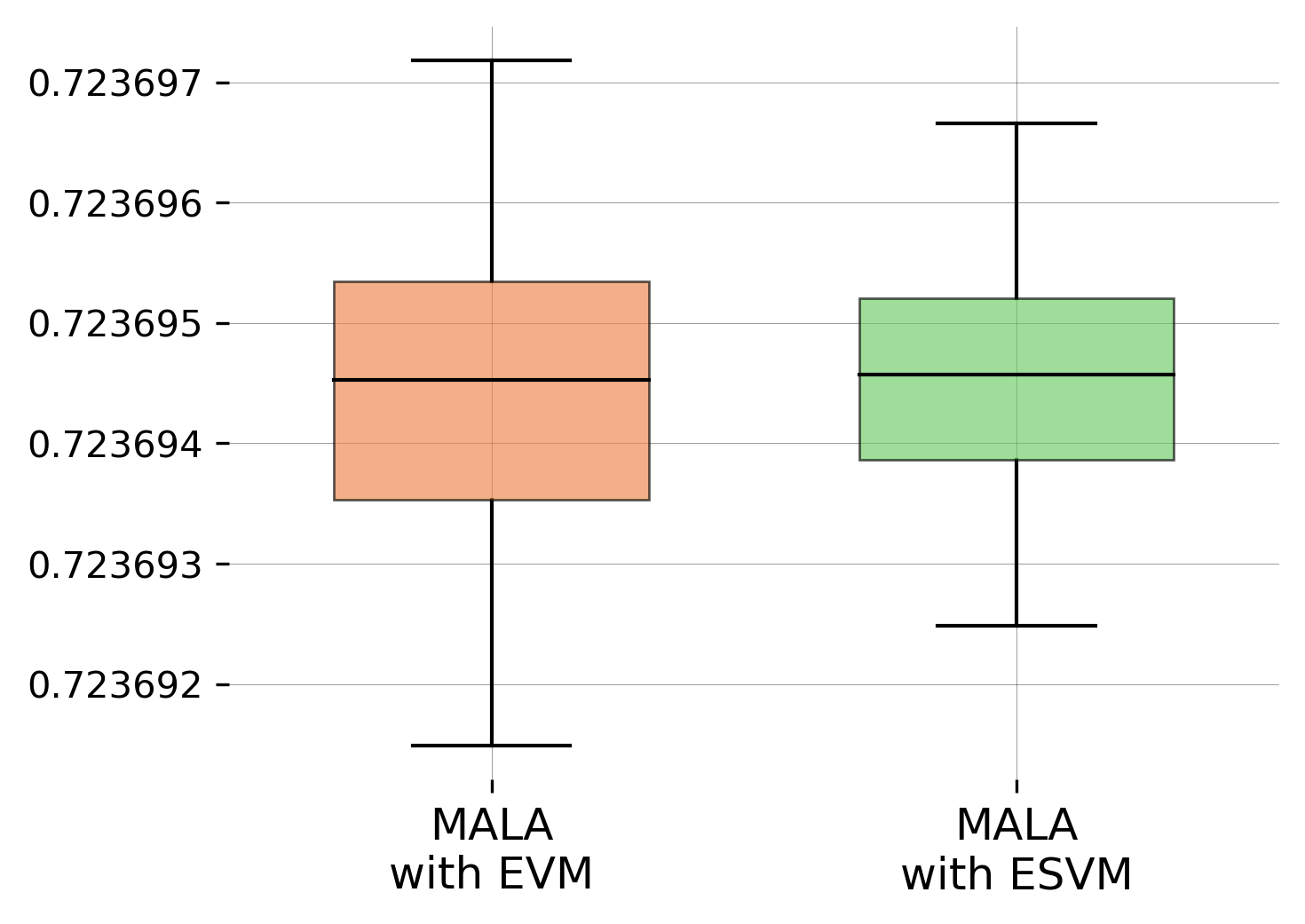}
\endminipage
\minipage{0.24\textwidth}%
  \includegraphics[width=\linewidth]{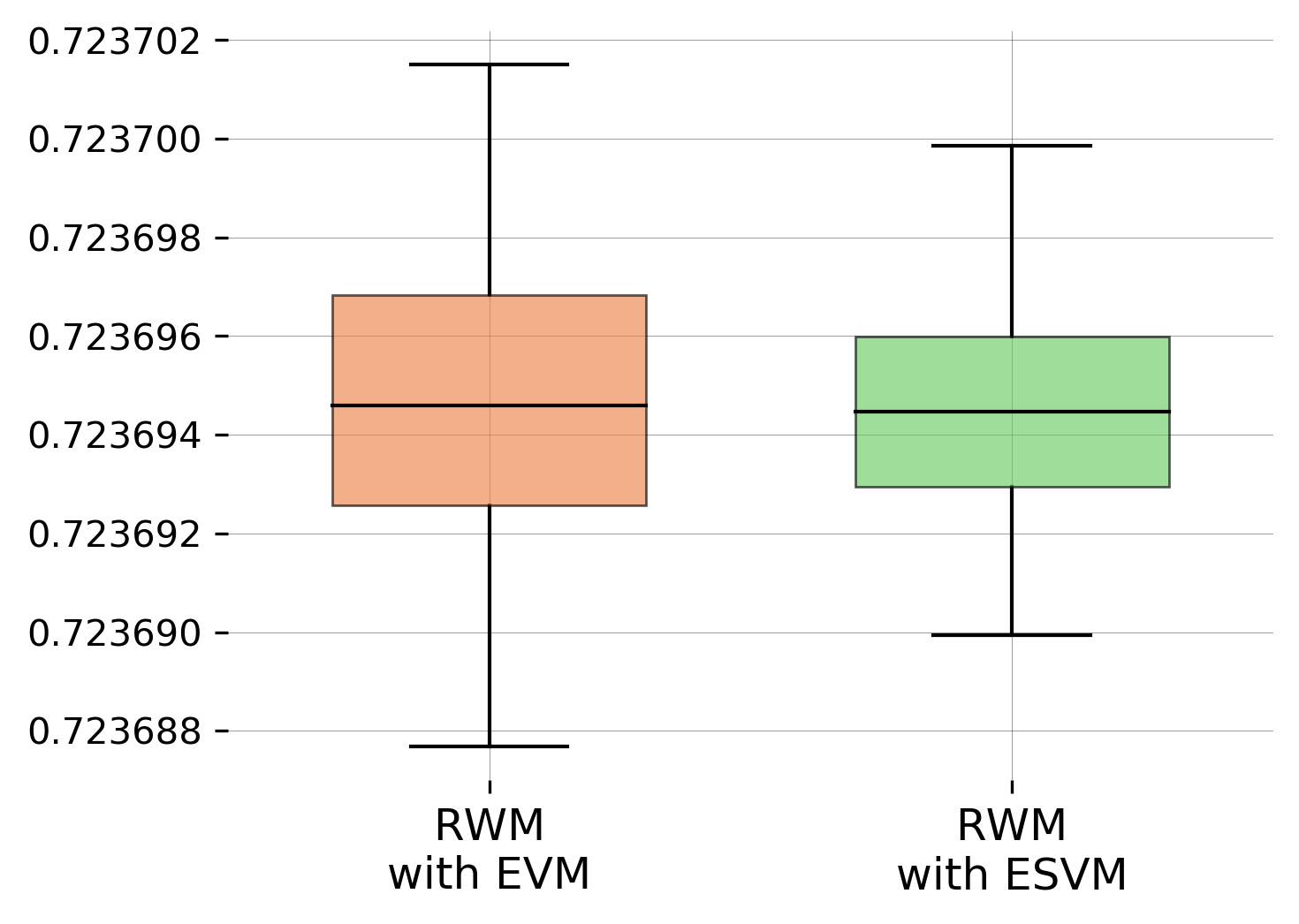}
\endminipage
\end{figure}

\begin{figure}[!htb]
\captionsetup{font=small}
\caption{Estimation of the average test likelihood in probit regression for the EEG dataset. Left figure: boxplot for ULA estimates compared to the corresponding boxplots for EVM and ESVM estimates. Next three figures: boxplots for EVM and ESVM estimates for ULA, MALA, and RWM with second-order control variates being used.}
\label{fig:eeg_probit}
\minipage{0.24\textwidth}
  \includegraphics[width=\linewidth]{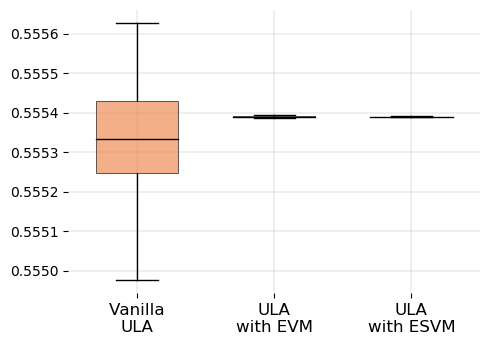}
\endminipage\hfill
\minipage{0.24\textwidth}
  \includegraphics[width=\linewidth]{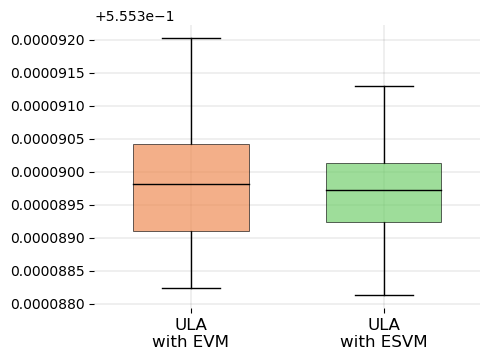}
\endminipage\hfill
\minipage{0.24\textwidth}%
  \includegraphics[width=\linewidth]{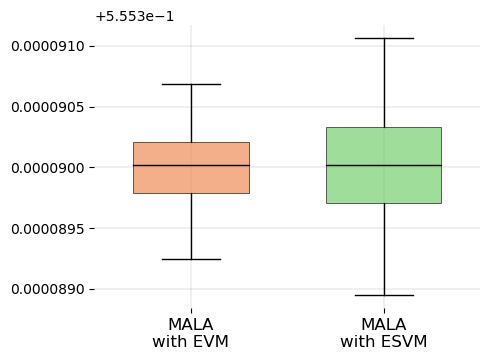}
\endminipage
\minipage{0.24\textwidth}%
  \includegraphics[width=\linewidth]{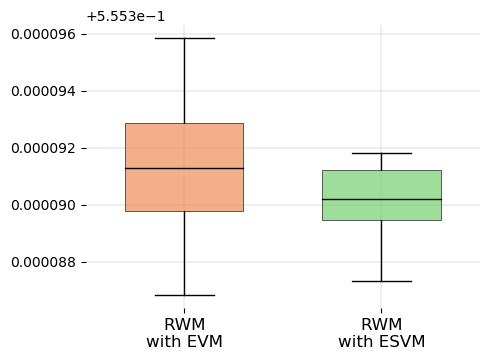}
\endminipage
\end{figure}

\begin{figure}[!htb]
\captionsetup{font=small}
\caption{Estimating the mean of the posterior distribution in the Van der Pol model. From left to right: boxplots for vanilla estimates and the corresponding EVM and ESVM estimates with third-order polynomials being used as control variates, EVM and ESVM comparison for second-order polynomials, and EVM and ESVM comparison for third-order polynomials.}
\label{fig:van_der_pole}
\minipage{0.32\textwidth}
  \includegraphics[width=\linewidth]{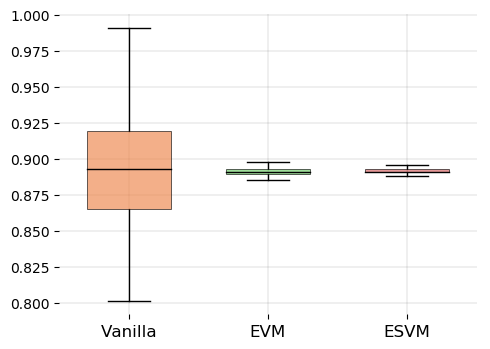}
\endminipage\hfill
\minipage{0.32\textwidth}
  \includegraphics[width=\linewidth]{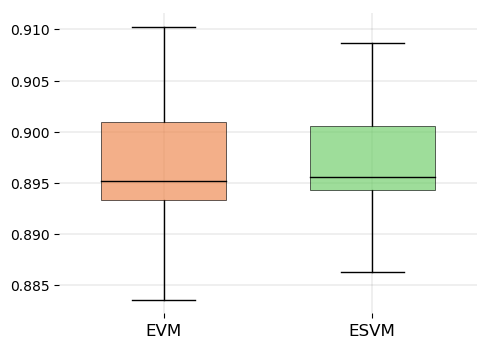}
\endminipage\hfill
\minipage{0.32\textwidth}
  \includegraphics[width=\linewidth]{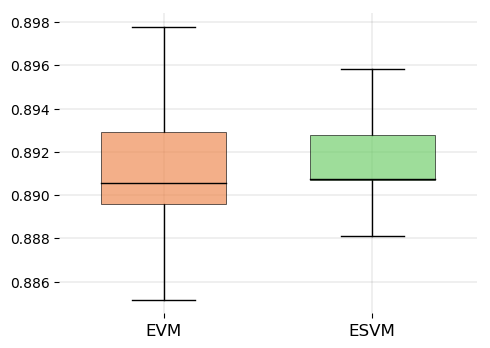}
\endminipage\hfill
\end{figure} 

\begin{figure}[!htb]
\captionsetup{font=small}
\caption{Estimating the mean of the posterior distribution in the Lotka-Volterra model. From left to right: posterior mean for parameters $\alpha, \beta, \gamma$, and $\delta$.}
\label{fig:lv_all}
\minipage{0.24\textwidth}
  \includegraphics[width=\linewidth]{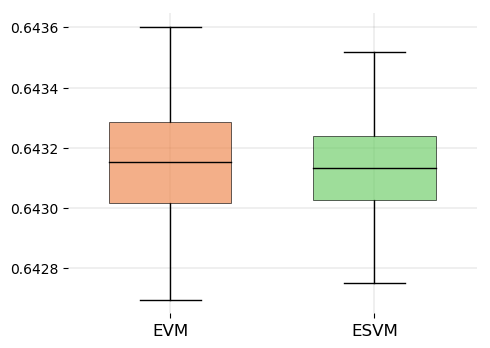}
\endminipage\hfill
\minipage{0.24\textwidth}
  \includegraphics[width=\linewidth]{lv_beta_final.png}
\endminipage\hfill
\minipage{0.24\textwidth}
  \includegraphics[width=\linewidth]{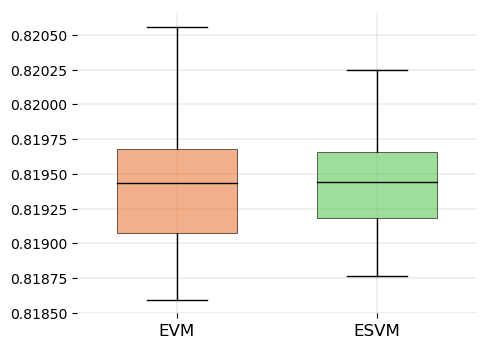}
\endminipage\hfill
\minipage{0.24\textwidth}
  \includegraphics[width=\linewidth]{lv_delta_final.png}
\endminipage\hfill
\end{figure}

\clearpage

\appendix
\section{Appendix}\label{sec:appendix}
\subsection{Concentration of the spectral variance estimator for Lipschitz functions}
\label{app:transportion-information}
The proof of a concentration inequality for Lipschitz functions falls naturally into three steps.
First we show, using a result from \citet{Guillin2004}, that the joint distribution of \((X_k)_{k=0}^{n-1}\) satisfies \(\operatorname{T}_2(\TC)\) model. Then we note that \(\operatorname{T}_2(\TC)\) implies Gaussian concentration for all Lipschitz functions. And, finally, this Gaussian concentration property implies a concentration inequality for  quadratic forms from \citet{ra15}, which we apply to the spectral variance estimator.
For the sake of completeness we provide all necessary details below.
\par
\paragraph{Tensorization of $\operatorname{T}_2(\TC)$ for Markov chains.}
Let $\P^n_{x_0}$ be the joint distribution of the Markov chain \((X_k)_{k=0}^{n-1}\)
with the Markov kernel $P$ under $\P_{x_0}$.
Since here we consider distributions on the product space $\Xset^{n-1}$,
additional definitions are needed.
We define the distance between points $x^{n-1} = (x_1,\ldots,x_{n-1}) \in \Xset^{n-1}$ and $y^{n-1} = (y_1,\ldots,y_{n-1}) \in \Xset^{n-1}$  by
\begin{equation}
	\operatorname{d}_2(x^{n-1},y^{n-1}) \eqdef \biggl(\sum\limits_{j=1}^{n-1} \| x_j - y_j \|^2 \biggr)^{1/2},
	\label{eq:dmetric}
\end{equation}
The \(L^p\)-Wasserstein distance between probability measures \(\mu\) and \(\nu\) on $\Xset^{n-1}$
with respect to the metric $\operatorname{d}_2$ is given by
\begin{eqnarray*}
W_p^{\operatorname{d}_2}(\mu,\nu) \eqdef \inf_{\zeta} \bigg(  \int_{\Xset^{n-1}\times\Xset^{n-1}} \operatorname{d}_2^p(x,y) \,  \rmd\zeta (x,y)\bigg)^{1/p},
\end{eqnarray*}
where the infimum is taken over all probability measures \(\zeta\) on the product space \(\Xset^{n-1}\times\Xset^{n-1}\) with marginal distributions \(\mu\) and \(\nu\).
And finally, we say that the probability measure \(\mu\) on $\Xset^{n-1}$ satisfies
\(\operatorname{T}_p(\TC)\) if there is a constant \(\TC > 0\) such that for any probability measure \(\nu\) on $\Xset^{n-1}$
\begin{equation*}
	W_p^{\operatorname{d}_2}(\mu,\nu) \leq \sqrt{2\TC \KL{\mu}{\nu}}.
\end{equation*}
The following theorem provides sufficient conditions for the measure \(\P^n_{x_0}\) to satisfy \(\operatorname{T}_2(\TC)\). 
\begin{theorem}[{\citet[Theorem~2.5]{Guillin2004}}]
	\label{T2CostGuillin}
	Assume that there exists $\TC>0$, such that
	$P(x,\cdot) \in \operatorname{T}_2(\TC)$ for any $x\in\Xset$, and
	there exists \(0 < r < 1\), such that for any $x,y \in \Xset$,
	\begin{equation*}
	W_2(P(x, \cdot),P(y, \cdot))  \leq r \| x - y \|.
	\end{equation*}
	Then for any probability measure \(\Q\) on \(\Xset^{n-1}\),
	the product measure \(\P^n_{x_0}\) satisfies  \(\operatorname{T}_2(\TC/(1-r)^2)\),
	\ie
	\begin{eqnarray*}
	W_2^{\operatorname{d}_2}(\Q,\P^n_{x_0}) \leq \frac{1}{1-r}\sqrt{2 \TC\KL{\Q}{\P^n_{x_0}}}.
	\end{eqnarray*}
\end{theorem}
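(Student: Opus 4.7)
The plan is to construct a coupling of $\Q$ with $\P^n_{x_0}$ recursively along the sample path, combining at each step the one-step transport inequality $P(x,\cdot)\in\operatorname{T}_2(\TC)$ with the $W_2$-contractivity of $P$. First I would disintegrate both measures via the chain rule, writing $\Q(\rmd x_{1:n-1}) = Q_1(\rmd x_1) \prod_{k=2}^{n-1} Q_k(\rmd x_k \mid x_{1:k-1})$ and $\P^n_{x_0}(\rmd x_{1:n-1}) = P(x_0, \rmd x_1) \prod_{k=2}^{n-1} P(x_{k-1}, \rmd x_k)$. The chain rule for relative entropy then yields
\begin{equation*}
\KL{\Q}{\P^n_{x_0}} = \sum_{k=1}^{n-1} \PE[\Q]{ \KL{Q_k(\cdot \mid X_{1:k-1})}{P(X_{k-1}, \cdot)} }\eqsp,
\end{equation*}
with the convention $X_0 = x_0$.

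Next I would build coupled trajectories $(X_k^\Q, X_k^P)_{k=1}^{n-1}$ inductively. Given the past, I would draw $X_{k+1}^\Q \sim Q_{k+1}(\cdot \mid X_{1:k}^\Q)$ and then apply the gluing lemma to paste together two optimal couplings: the $\operatorname{T}_2(\TC)$-optimal coupling of $Q_{k+1}(\cdot \mid X_{1:k}^\Q)$ with $P(X_k^\Q, \cdot)$ through an intermediate variable $Z$, and the $W_2$-optimal coupling of $P(X_k^\Q, \cdot)$ with $P(X_k^P, \cdot)$ that links $Z$ to $X_{k+1}^P$. Writing the pathwise triangle inequality $\|X_{k+1}^\Q - X_{k+1}^P\| \leq \|X_{k+1}^\Q - Z\| + \|Z - X_{k+1}^P\|$ and passing to the conditional $L^2$ norm given the past (via Minkowski) and then to the outer $L^2$ (again by Minkowski) produces the recursion
\begin{equation*}
D_{k+1} \leq r\,D_k + \tilde e_{k+1}, \qquad D_k \eqdef \bigl(\E \|X_k^\Q - X_k^P\|^2\bigr)^{1/2}\eqsp,
\end{equation*}
with $\tilde e_{k+1}^2 \eqdef 2\TC\, \PE[\Q]{\KL{Q_{k+1}(\cdot \mid X_{1:k})}{P(X_k, \cdot)}}$ and $D_0 = 0$.

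Iterating the recursion produces the discrete convolution bound $D_k \leq \sum_{j=1}^k r^{k-j}\, \tilde e_j$. Applying Young's convolution inequality $\|a * b\|_{\ell^2} \leq \|a\|_{\ell^1}\|b\|_{\ell^2}$ with $a_k = r^k \ind_{k \ge 0}$, for which $\|a\|_{\ell^1} = (1-r)^{-1}$, then gives
\begin{equation*}
\sum_{k=1}^{n-1} D_k^2 \;\leq\; \frac{1}{(1-r)^2}\sum_{k=1}^{n-1} \tilde e_k^2 \;=\; \frac{2\TC}{(1-r)^2}\, \KL{\Q}{\P^n_{x_0}}\eqsp,
\end{equation*}
the last equality being the KL chain rule above. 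Since the constructed coupling has cost $\E[\operatorname{d}_2^2(X^\Q, X^P)] = \sum_k D_k^2$, this bounds $W_2^{\operatorname{d}_2}(\Q, \P^n_{x_0})^2$ from above by $2\TC(1-r)^{-2}\KL{\Q}{\P^n_{x_0}}$, which is the claim.

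The main delicate step will be executing the two Minkowski passages carefully so that the contraction factor $r$ enters linearly in $D_k$; if one is tempted to square the triangle inequality before taking expectations one picks up a spurious factor of $2$ and loses the sharp constant $\TC/(1-r)^2$. A secondary bookkeeping point is to ensure that the step-wise gluing produces a jointly measurable coupling kernel along the trajectory, which follows from standard regular conditional probability arguments.
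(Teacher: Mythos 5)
The paper does not prove this statement itself; it is imported verbatim from \citet[Theorem~2.5]{Guillin2004}, so there is no internal proof to compare against. Your reconstruction is correct and follows the same Markovian tensorization argument as that reference: the KL chain rule along the filtration, a recursively glued coupling combining the one-step $\operatorname{T}_2(\TC)$-optimal coupling with the $W_2$-optimal coupling of $P(X_k^{\Q},\cdot)$ and $P(X_k^{P},\cdot)$ to get $D_{k+1}\leq r\,D_k+\tilde e_{k+1}$, and Young's convolution inequality to bound $\sum_k D_k^2$ by $(1-r)^{-2}\sum_k \tilde e_k^2$ --- and the two delicate points you flag (keeping the recursion linear via Minkowski instead of squaring the triangle inequality, and measurable selection of the optimal couplings along the trajectory) are precisely the ones that need care.
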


\paragraph{Gaussian concentration for Lipschitz functions.}
A probability measure which satisfies \(\operatorname{T}_2(\TC)\) inequality is known to satisfy Gaussian concentration inequality for all Lipschitz functions. Together with \Cref{T2CostGuillin} this implies the following result.

\begin{theorem}\label{th:clf}
Assume that $P$ satisfies~\ref{assumptionW}. 
	Then for any $L$-Lipschitz function $\phi: \Xset^{n-1}\to\rset$ with respect to the
	metric $\operatorname{d}_2$ from \eqref{eq:dmetric}, it holds
	\begin{equation}\label{eq:appendix-gc}
		\P_{x_0}\bigl(\bigl|\phi(X_0,\ldots,X_{n-1})-\PE[x_0]{\phi(X_0,\ldots,X_{n-1})}\bigr|\ge t\bigr)
		\leq 2 \exp \left( -\frac{t^2}{2\TC L^2/(1-r)^2} \right).
	\end{equation}
\end{theorem}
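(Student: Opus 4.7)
The plan is a two-step proof combining a black-box tensorization result with a classical implication of the $\operatorname{T}_2$ inequality. \textbf{Step 1:} Apply \Cref{T2CostGuillin} verbatim to assumption~\ref{assumptionW}, whose two clauses ($P(x,\cdot)\in\operatorname{T}_2(\TC)$ for every $x$ and $W_2(P(x,\cdot),P(y,\cdot))\leq r\|x-y\|$) are exactly the hypotheses required. This yields that the joint law $\P^n_{x_0}$ on $(\Xset^{n-1},\operatorname{d}_2)$ satisfies the tensorized transportation-cost inequality
\begin{equation*}
W_2^{\operatorname{d}_2}(\mathrm{Q},\P^n_{x_0}) \leq \frac{1}{1-r}\sqrt{2\TC\,\KL{\mathrm{Q}}{\P^n_{x_0}}}
\end{equation*}
for every probability measure $\mathrm{Q}$ on $\Xset^{n-1}$, \ie $\P^n_{x_0}\in\operatorname{T}_2(\TC/(1-r)^2)$.

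\textbf{Step 2:} Convert this $\operatorname{T}_2$ inequality into Gaussian concentration via the classical Marton coupling argument. Set $C=\TC/(1-r)^2$ and, given an $L$-Lipschitz $\phi$ (w.r.t.\ $\operatorname{d}_2$), define $A=\{\phi\leq \PE[x_0]{\phi}\}$ and $B_t=\{\phi\geq \PE[x_0]{\phi}+t\}$. Apply Step~1 to the conditional measures $\P^n_{x_0}(\cdot\mid A)$ and $\P^n_{x_0}(\cdot\mid B_t)$; using the identity $\KL{\P^n_{x_0}(\cdot\mid A)}{\P^n_{x_0}}=-\log\P^n_{x_0}(A)$ and the triangle inequality for $W_2^{\operatorname{d}_2}$, one obtains
\begin{equation*}
W_2^{\operatorname{d}_2}\bigl(\P^n_{x_0}(\cdot\mid A),\P^n_{x_0}(\cdot\mid B_t)\bigr)\leq \sqrt{-2C\log\P^n_{x_0}(A)}+\sqrt{-2C\log\P^n_{x_0}(B_t)}.
\end{equation*}
Since $\phi$ is $L$-Lipschitz, any $x\in A$ and $y\in B_t$ satisfy $\operatorname{d}_2(x,y)\geq t/L$, so the left-hand side is at least $t/L$. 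Rearranging, together with $\P^n_{x_0}(A)\geq 1/2$ (passing to the median if necessary), gives the one-sided tail $\P^n_{x_0}(B_t)\leq \exp(-t^2/(2CL^2))$; applying the same reasoning to $-\phi$ produces the lower tail, and a union bound yields~\eqref{eq:appendix-gc}.

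Both steps are essentially off-the-shelf—Step~1 is a direct citation and Step~2 is a textbook Bobkov--Götze / Marton derivation—so no substantial obstacle is anticipated. The only mildly delicate point is bookkeeping of constants: to avoid the nuisance of passing from the mean to the median, one may equivalently invoke the Bobkov--Götze characterization that $\operatorname{T}_2(C)$ is equivalent to the exponential-moment bound $\PE[x_0]{\exp(\lambda(\phi-\PE[x_0]{\phi}))}\leq \exp(CL^2\lambda^2/2)$ for all $L$-Lipschitz $\phi$, and then apply a standard Chernoff estimate to recover~\eqref{eq:appendix-gc} with the stated variance proxy $\TC L^2/(1-r)^2$ in one stroke.
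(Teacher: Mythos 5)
Your proposal is correct and, in its final form, coincides with the paper's argument: the paper's proof is exactly your Step 1 (apply \Cref{T2CostGuillin} to \ref{assumptionW} to get $\P^n_{x_0}\in\operatorname{T}_2(\TC/(1-r)^2)$) followed by the observation that $\operatorname{T}_2$ implies $\operatorname{T}_1$ with the same constant, and then the Bobkov--G\"otze equivalence between $\operatorname{T}_1(C)$ and the exponential-moment bound for Lipschitz functions, which yields \eqref{eq:appendix-gc} by Chernoff --- precisely the ``one-stroke'' alternative you sketch in your closing remark (with the caveat that the equivalence is with $\operatorname{T}_1$, not $\operatorname{T}_2$; only the implication you need survives for $\operatorname{T}_2$). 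The primary route you describe, Marton's two-set coupling argument, is a genuinely different and equally classical derivation, but as written it proves concentration around the \emph{median} and only for $t\geq\sqrt{2CL^2\log 2}$; passing from the median to the mean costs an additive shift of order $\sqrt{CL^2}$ in $t$, so the clean constant $2\TC L^2/(1-r)^2$ in \eqref{eq:appendix-gc} around the mean does not drop out of that argument without further bookkeeping. You correctly anticipate this, which is why the Bobkov--G\"otze route --- the one the paper takes --- is preferable here: it delivers the stated two-sided bound around the expectation with the exact variance proxy directly.
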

\begin{proof}
It follows from \citet[Section~9.2]{GentilBakryLedoux} that 
\(\operatorname{T}_2(\TC)\) implies \(\operatorname{T}_1(\TC)\)
with the same constant $\TC>0$ and with respect to the same metric $\operatorname{d}_2$. In its turn 
\(\operatorname{T}_1(C)\)  imply the Gaussian concentration~\eqref{eq:appendix-gc} due to the result of \citet{BobkovGotze}. 
It remains to note that $\P^n_{x_0}$ satisfies $\operatorname{T}_2(\TC/(1-r)^2)$ by \Cref{T2CostGuillin}.
\end{proof}
\paragraph{Gaussian concentration for quadratic forms.}
Once we have proved the Gaussian concentration for Lipschitz functions,
we can obtain the Bernstein-type inequality for quadratic forms.
This idea is due to \citet{ra15}, but since we use a modified version of the inequality,
we provide the details for readers convenience.
\begin{definition}[Concentration property]
Let $Z$ be a random vector in $\rset^n$. We  say
that $Z$ has the concentration property with constant $K$ if for every $1$-Lipschitz function
$\phi: \rset^n \to \rset$, we have $\E|\phi(X)|<\infty$ and for every $t > 0$,
	\[
		\P\Bigl(\bigl|\phi(Z)-\PE{\phi(Z)}\bigr|\ge t\Bigr)
		\leq 2 \exp \left( -\fraca{t^2}{K^2} \right).
	\]
\end{definition}
The following theorem shows that the concentration property implies a
concentration inequality for quadratic forms.
\begin{theorem}\label{th:cqf}
Let $Z$ be a random vector in $\rset^n$. If $Z$ has the  concentration
property with constant $K$, then for any $n \times n$ matrix $A$ and every $t > 0$,
\begin{align*}
	\P\Bigl(\bigl|Z^{\T}A Z -\PE{Z^{\T}AZ} \bigr|>t\Bigr)
	\leq
	2  \exp\biggl(-\frac{t^2}{c K^2\left(\PE{\| AZ \|^2} + t\|A\|\right)}\biggr),
\end{align*}
where $c>0$ is a universal constant.
\end{theorem}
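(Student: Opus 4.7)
This is a Hanson-Wright type inequality for random vectors enjoying dimension-free Gaussian concentration, and my plan is to follow the strategy of \citet{ra15}. The key idea is to reduce $Z^{\T} A Z$ to concentration of Lipschitz functions via a spectral decomposition plus a square-root trick.

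First I would symmetrize: since $Z^{\T} A Z = Z^{\T} A_s Z$ with $A_s = (A + A^{\T})/2$, $\|A_s\| \leq \|A\|$, and $\|A_s Z\|^2 \leq \|A Z\|^2$ up to absolute constants in expectation, I may assume that $A$ is symmetric. I would then use the spectral decomposition $A = A_+ - A_-$ into positive and negative PSD parts, satisfying $A_+ A_- = 0$ and $\|A_{\pm}\| \leq \|A\|$, which gives
\begin{equation*}
Z^{\T} A Z = \|A_+^{1/2} Z\|^2 - \|A_-^{1/2} Z\|^2.
\end{equation*}
For each PSD $B \in \{A_+, A_-\}$, the map $z \mapsto \|B^{1/2} z\|$ is $\|B\|^{1/2}$-Lipschitz, so the concentration hypothesis on $Z$ yields
\begin{equation*}
\P\bigl(\bigl|\|B^{1/2} Z\| - \E \|B^{1/2} Z\|\bigr| > s\bigr) \leq 2 \exp\bigl(-s^2/(K^2 \|B\|)\bigr).
\end{equation*}

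I would then convert this linear concentration of $\|B^{1/2} Z\|$ around $m := \E \|B^{1/2} Z\|$ into a Bernstein bound for $Z^{\T} B Z = \|B^{1/2} Z\|^2$ via the factorization $x^2 - m^2 = (x - m)(x + m)$: inverting $s(2m + s) = t$ yields a sub-Gaussian regime with exponent of order $t^2/(K^2 m^2 \|B\|)$ when $t$ is small and a sub-exponential regime of order $t/(K^2 \|B\|)$ when $t$ is large. The residual gap $\E Z^{\T} B Z - m^2 = \mathrm{Var}(\|B^{1/2} Z\|) \lesssim K^2 \|B\|$, itself implied by the Lipschitz concentration, is absorbed into the sub-exponential scale.

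Finally I would combine the bounds for $B = A_+$ and $B = A_-$ by a union bound allocating $t/2$ to each, and identify the resulting sub-Gaussian coefficient with $K^2 \E \|AZ\|^2$. For this last step one uses the PSD inequality $B^2 \leq \|B\| B$ to get $\E \|BZ\|^2 \leq \|B\| \E Z^{\T} B Z$, and the orthogonality $A_+ A_- = 0$ in $A^2 = A_+^2 + A_-^2$ to recover $\E \|AZ\|^2 = \E \|A_+ Z\|^2 + \E \|A_- Z\|^2$. The main obstacle is precisely this matching of constants: a direct attempt on $f(z) = z^{\T} A z$ fails because its gradient $2 A z$ is not globally bounded, and only after passing to square-roots of the PSD parts does one recover globally $\|A\|^{1/2}$-Lipschitz functionals and a Bernstein bound with the tight sub-Gaussian coefficient $\E \|AZ\|^2$ rather than the looser $\|A\| \cdot \E Z^{\T} |A| Z$.
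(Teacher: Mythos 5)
Your overall strategy is genuinely different from the paper's. The paper follows Adamczak's localization argument: it first shows that $\psi(z)=\|\nabla\varphi(z)\|=2\|Az\|$ concentrates around $2\,\E\|AZ\|$, restricts to the high-probability set $B_t$ on which the gradient is controlled, replaces $\varphi(z)=z^{\T}Az$ by the supremum of its tangent planes over $B_t$ (a globally Lipschitz function with constant $2\,\E\|AZ\|+\sqrt{t\|A\|}$), and then transfers the concentration back to $\varphi(Z)$ via a median-to-mean lemma. It is precisely the appearance of $\E\|AZ\|$ as the Lipschitz constant, together with $(\E\|AZ\|)^2\leq\E\|AZ\|^2$, that produces the stated sub-Gaussian coefficient $K^2\,\E\|AZ\|^2$.

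The gap in your route sits exactly at the step you yourself flag as ``the main obstacle'', and passing to square roots does not resolve it. For PSD $B$, the Bernstein bound you extract from $x^2-m^2=(x-m)(x+m)$ with $m=\E\|B^{1/2}Z\|$ has sub-Gaussian coefficient $K^2\|B\|m^2\leq K^2\|B\|\,\E[Z^{\T}BZ]$, and the inequality you invoke, $B^2\leq\|B\|B$, yields $\E\|BZ\|^2\leq\|B\|\,\E[Z^{\T}BZ]$ --- which goes in the \emph{wrong} direction for identifying $\|B\|m^2$ with $\E\|BZ\|^2$. The two quantities can differ by a dimension-dependent factor: for $B=\diag(1,\eps^2,\dots,\eps^2)$ with $\eps^2=n^{-1/2}$ and isotropic $Z$, one has $\|B\|\bigl(\E\|B^{1/2}Z\|\bigr)^2\asymp\sqrt{n}$ while $\E\|BZ\|^2\asymp 1$. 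So your argument proves only the weaker tail $2\exp\bigl(-t^2/(cK^2(\|A\|\,\E[Z^{\T}|A|Z]+t\|A\|))\bigr)$, not the stated one. (That weaker form would in fact suffice for the downstream use in \Cref{th:conc_var}, where $\E\|A_nZ_n(h)\|^2$ is immediately bounded by $\|A_n\|\,\E[\SV{h}]$ anyway --- but it does not prove the theorem as written.) A secondary issue: your symmetrization claim $\E\|A_sZ\|^2\lesssim\E\|AZ\|^2$ is false in general (take $A$ nilpotent and $Z$ concentrated near the kernel of $A$), although the paper's own ``without loss of generality'' is equally cavalier on this reduction.
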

\begin{proof}
Without loss of generality one may assume that \(A\) is symmetric and positively semidefinite.
Let \(\varphi(z) \eqdef z^\T A z\), \( z\in\R^n \). Define \(\psi(z) \eqdef \|\nabla \varphi(z)\|\).
Since \(\| \nabla \varphi(z)\| \leq 2\|A\|\|z\|\), the function \(\psi\) is \((2\| A\|)\)-Lipschitz.
By the concentration property
\[
	\P\bigl(\bigl| \psi(Z) - \PE{ \psi(Z)}\bigr| \geq t \bigr)
	\leq
	2 \exp \left( -\frac{t^2}{4K^2\|A\|^2} \right).
\]
Note that \(\PE{\psi(Z)} = 2 \PE{\|A Z\|}\) and set for $t > 0$,
\[
B_t \eqdef \bigl\{z \in \R^n: \psi(z) \leq 2 \PE{\|A Z\|} + \sqrt{t \| A\|}\bigr\}.
\]
It holds
\[
	\P(Z \notin B_t) \leq 2\exp \biggl(-\frac{ t}{4 K^2 \| A\|}\biggr).
\]
Define \(\widetilde{\varphi}(z) \eqdef \sup_{y \in B_t} (\langle \nabla \varphi(y), z - y \rangle + \varphi(y))\).
This function is Lipschitz, since for any $z,x\in B_t$,
\(|\widetilde{\varphi}(z_1)-\widetilde{\varphi}(z_2)| \leq \sup_{y \in B_t}\|\nabla \varphi(y)\|\|z_1-z_2\| \leq M\|z_1-z_2\|\) with \(M \eqdef 2 \PE{\|A Z\|} + \sqrt{t\| A\|}\). Hence, again by the concentration property, for any $s>0$,
\begin{align*}
	\P\bigl(\bigl|\widetilde \varphi(Z) - \PE{\widetilde \varphi(Z)}\bigr| \geq s \bigr)
	&\leq 2 \exp \biggl( -\frac{s^2}{K^2(2 \PE{\|A Z\|} + \sqrt{t\| A\|})^2} \biggr)\\
	&\leq 2 \exp \biggl( -\frac{s^2}{4K^2(\PE{\|A Z\|} + \sqrt{t\| A\|})^2} \biggr).
\end{align*}
Moreover, by convexity of $\varphi$, we have $\widetilde\varphi(z)\leq\varphi(z)$ and for $z\in B_t$,
$\widetilde\varphi(z)=\varphi(z)$. Consider two random variables $Y = {\varphi}(Z)$ and
$\widetilde Y = \widetilde{\varphi}(Z)$. We have proved that $Y$ and $\widetilde Y$ coincide on the set $B_t$ of large probability
and $\widetilde Y$ has the concentration property.
It follows from \Cref{Lem3_1} (given below) that in this case
we have the Gaussian concentration for $Y$ around median $\Med Y$ of the form
\begin{align*}
	\P\Bigl(\bigl|Z^\T A Z - \Med[Z^\T A Z]\bigr| \geq t\Bigr)
	&\leq 2 \exp{\biggl(-\frac{t^2}{c K^2(\PE{\|A Z\|} + \sqrt{t \|A\|})^2}\biggr)}\\
	&\leq 2 \exp{\biggl(-\frac{t^2}{2c K^2(\PE{\|A Z\|^2} + t \|A\|)}\biggr)}.
\end{align*}
By a standard argument (see, for example, \citet[Lemma 3.2]{ra15}), we replace the median by the mean
at the cost of a universal factor. This completes the proof for a new absolute constant $c>0$.
\end{proof}

\begin{lemma}
\label{Lem3_1}
Assume that there exist positive constants \(a,b,t > 0\)  such that for any  \(s > 0\)  random variables
\(Y\), \(\widetilde{Y}\) satisfy
\[
\P\left(\bigl|\widetilde{Y}-\E \widetilde{Y}\bigr| \geq s\right) \leq 2 \exp{\biggl(-\fraca{s^2}{(a + b\sqrt{t})^2}\biggr)}
\]
and $\P\left(\widetilde{Y} \neq Y\right) \leq 2 \exp{\bigl(-{t}/{b}\bigr)}$.
Then for some positive constant $c>0$ and all \(t>0\),
\[
\P(|Y - \Med Y| \geq t) \leq 2 \exp{\biggl(-\fraca{t^2}{\{c( a + b\sqrt{t})^2\}}\biggr)}.
\]
\end{lemma}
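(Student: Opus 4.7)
The plan is to transfer the sub-Gaussian concentration of $\widetilde Y$ around its mean $\mu \eqdef \E\widetilde Y$ to $Y$ around its median $m \eqdef \Med Y$, absorbing only the coupling cost $\P(Y \neq \widetilde Y) \leq 2\rme^{-t/b}$ on the bad event. The first step is to control the displacement $|m - \mu|$. By definition of the median, $\P(Y \geq m) \geq 1/2$, so $\P(\widetilde Y \geq m) \geq 1/2 - 2\rme^{-t/b}$. In the nontrivial regime $2\rme^{-t/b} \leq 1/4$ (outside this range the conclusion is trivial after $c$ is enlarged, since $t^2/(c(a+b\sqrt t)^2) \leq t/(cb)$ becomes $O(1)$), this probability is at least $1/4$. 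Were $m > \mu + s$ with $s = (a+b\sqrt t)\sqrt{\log 8}$, the concentration hypothesis would force $\P(\widetilde Y \geq m) \leq 2\rme^{-\log 8} = 1/4$, a contradiction; the symmetric lower-tail argument then yields $|m - \mu| \leq C_0 (a+b\sqrt t)$ with $C_0 = \sqrt{\log 8}$.

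Next I would decompose
\[
\P(|Y - m| \geq t) \leq \P(Y \neq \widetilde Y) + \P(|\widetilde Y - m| \geq t) \leq 2\rme^{-t/b} + \P\bigl(|\widetilde Y - \mu| \geq t - C_0 (a+b\sqrt t)\bigr),
\]
where the last step uses the triangle inequality together with the bound on $|m-\mu|$. In the subregime $t \geq 2C_0 (a+b\sqrt t)$ the residual argument is at least $t/2$, and the concentration hypothesis bounds the last term by $2\exp(-t^2/(4(a+b\sqrt t)^2))$. In the complementary subregime one has $t^2/(a+b\sqrt t)^2 < 4 C_0^2$, so for $c$ large enough $2\exp(-t^2/(c(a+b\sqrt t)^2))$ exceeds $1$ and the claim holds trivially.

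Finally I would merge the two tail contributions into a single exponential of the advertised form. The concentration-derived tail already has this form up to a constant factor in the exponent. For the coupling term $2\rme^{-t/b}$, I would compare the exponents using $(a+b\sqrt t)^2 \geq b^2 t$, which gives $t^2/(a+b\sqrt t)^2 \leq t/b$; a case analysis on whether $a \gtrless b\sqrt t$ then lets me absorb $\rme^{-t/b}$ into $\exp(-t^2/(c(a+b\sqrt t)^2))$ after enlarging $c$ once more. The principal obstacle will be precisely this final bookkeeping: ensuring a single constant $c$ simultaneously controls both the shifted concentration tail and the coupling tail uniformly across the sub-regimes generated by the relative magnitudes of $a$, $b$ and $t$, so that the combined bound preserves the clean form stated in the lemma.
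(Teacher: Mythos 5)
Your route is genuinely different from the paper's: the paper disposes of this lemma in one line by citing \citet[Lemma~3.2]{ra15} and remarking that $-\min(t^2/a^2,t/b)$ is dominated by the advertised exponent, whereas you reconstruct the argument from scratch (median--mean shift via the sub-Gaussian tail, decomposition over the coupling event, absorption of the two tails into one exponential). That reconstruction is essentially Adamczak's own proof, and steps one and two are sound apart from a harmless non-strictness at $s=(a+b\sqrt t)\sqrt{\log 8}$, where the contradiction reads $1/4\le 1/4$; take $\sqrt{\log 16}$ instead.

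The genuine gap is the inequality you lean on twice, namely $t^2/(a+b\sqrt t)^2\le t/b$: from $(a+b\sqrt t)^2\ge b^2t$ you only get $t^2/(a+b\sqrt t)^2\le t/b^2$, which is weaker than $t/b$ exactly when $b<1$, and no enlargement of $c$ repairs it. Indeed, for the statement as literally written the final absorption is impossible: take $\widetilde Y\equiv 0$ (so the sub-Gaussian hypothesis holds with $a$ arbitrarily small) and $Y=2t$ with probability $2\rme^{-t/b}$, zero otherwise; then $\Med Y=0$ and $\P(|Y-\Med Y|\ge t)=2\rme^{-t/b}$, which exceeds $2\exp\bigl(-t^2/(c\,b^2t)\bigr)=2\exp(-t/(cb^2))$ once $b<1/c$. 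The root cause is a typo inherited from \citet{ra15}: the scale in both the hypothesis and the conclusion should be $a+\sqrt{bt}$, not $a+b\sqrt t$ (this is also what the application in the proof of \Cref{th:cqf} actually produces, with $b\asymp K^2\|A\|$ and coupling tail $\rme^{-t/b}$). With $(a+\sqrt{bt})^2\ge bt$ your absorption step, your dispatch of the regime $2\rme^{-t/b}>1/4$, and the paper's one-line remark all go through verbatim. So you correctly identified the final bookkeeping as the principal obstacle, but it cannot be overcome without first correcting the statement.
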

\begin{proof}
This lemma is proved in \citet[Lemma 3.2]{ra15}. We just note that
the quantity $-\min\bigl(\fraca{t^2}{a^2},\fraca{t}{b}\bigr)$, which appears
in the result of \citet{ra15}, is bounded by the quantity $-t^2/( a + b\sqrt{t})^2$.
\end{proof}

We have arrived at the following concentration result for quadratic forms of Lipschitz function of
a Markov chain. This result is of independent interest.
\begin{corollary}
\label{th:conc}
	Assume that there exists $\TC>0$, such that
	$P(x,\cdot) \in \operatorname{T}_2(\TC)$ for any $x\in\Xset$, and
	there exists \(0 < r < 1\), such that for any $x,y \in \Xset$,
	\begin{equation*}
	W_2(P(x, \cdot),P(y, \cdot))  \leq r \| x - y \|.
	\end{equation*}
	Let also $h:\Xset\to\R$ be a $L$-Lipschitz function.
	Denote $Z_n(h) \eqdef (h(X_0),\ldots,h(X_{n-1}))^{\T}$. Then for any $n \times n$ matrix $A$ and any $t > 0$,
\begin{multline}\label{eq:expHW}
\P_{x_0}\Bigl(\bigl|Z_n(h)^{\T}A Z_n(h) -\PE[x_0]{Z_n(h)^{\T}AZ_n(h)} \bigr|>t\Bigr)
\\ \leq
2  \exp\biggl(-\frac{t^2}{c K^2\left(\PE[x_0]{\| AZ_n(h) \|^2} + t\|A\|\right)}\biggr),
\end{multline}
where $c>0$ is some universal constant and $K^2=\TC L^2/(1-r)^2$.
\end{corollary}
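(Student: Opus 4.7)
The strategy is to chain together the two abstract results already proved: first use the transportation/Wasserstein hypothesis (via \Cref{th:clf}) to show that the random vector $Z_n(h)$ has the Gaussian concentration property with a constant proportional to $\TC L^2/(1-r)^2$, and then feed this concentration property into the quadratic-form concentration result \Cref{th:cqf}.

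The key step is translating a $1$-Lipschitz function on $\R^n$ (the coordinates of $Z_n(h)$) into a Lipschitz function on $\Xset^{n-1}$ with respect to the metric $\operatorname{d}_2$ defined in \eqref{eq:dmetric}. Concretely, let $\phi:\R^n\to\R$ be any $1$-Lipschitz function (in the Euclidean norm), and define
\[
	\Phi(x_0,\ldots,x_{n-1}) \eqdef \phi\bigl(h(x_0),\ldots,h(x_{n-1})\bigr).
\]
Using that $h$ is $L$-Lipschitz coordinatewise, one bounds
\[
	|\Phi(x) - \Phi(y)|^2
	\leq \sum_{k=0}^{n-1}|h(x_k)-h(y_k)|^2
	\leq L^2 \sum_{k=0}^{n-1} \|x_k-y_k\|^2 = L^2 \operatorname{d}_2^2(x,y),
\]
so that $\Phi$ is $L$-Lipschitz with respect to $\operatorname{d}_2$. \Cref{th:clf} (whose hypotheses hold by assumption) then yields
\[
	\P_{x_0}\bigl(|\phi(Z_n(h)) - \PE[x_0]{\phi(Z_n(h))}| \geq t\bigr) \leq 2\exp\!\Bigl(-\frac{t^2}{2\TC L^2/(1-r)^2}\Bigr).
\]
This is exactly the concentration property for $Z_n(h)$ with constant of order $\sqrt{\TC}L/(1-r)$, modulo a universal factor that can be absorbed into the constant $c$ appearing in the final bound.

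Having established the concentration property for $Z_n(h)$, the remaining step is purely algebraic: apply \Cref{th:cqf} to the random vector $Z_n(h)$ and the matrix $A$. The output is precisely the inequality
\[
	\P_{x_0}\!\left(\bigl|Z_n(h)^{\T}AZ_n(h) - \PE[x_0]{Z_n(h)^{\T}AZ_n(h)}\bigr| > t\right)
	\leq 2\exp\!\Bigl(-\frac{t^2}{cK^2(\PE[x_0]{\|AZ_n(h)\|^2} + t\|A\|)}\Bigr),
\]
with $K^2 = \TC L^2/(1-r)^2$ after renaming the universal constant. I do not anticipate a technical obstacle here: the only subtle point is the Lipschitz-composition step above, and the rest is just invoking the two previously established theorems in sequence. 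The expectation and operator norm appearing in the denominator come directly from \Cref{th:cqf} without further manipulation.
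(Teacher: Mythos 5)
Your proposal is correct and follows exactly the paper's own argument: compose a $1$-Lipschitz function on $\R^n$ with $h$ coordinatewise to verify that $Z_n(h)$ has the concentration property (via \Cref{th:clf}), then apply \Cref{th:cqf}. No gaps.
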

\begin{proof}
	The statement follows from the fact $Z_n(h)$ has the concentration property with $K=2\TC L^2/(1-r)^2$.
	Indeed, for any $1$-Lipschitz function $\phi: \rset^n \to \rset$ and any $x^{n-1} \eqdef (x_1,\ldots,x_{n-1}) \in \Xset^{n-1}$, $y^{n-1} \eqdef (y_1,\ldots,y_{n-1}) \in \Xset^{n-1}$, it holds
\begin{align*}
	\bigl|\phi(h(x_0),\ldots,h(x_{n-1}))-\phi(h(y_0),\ldots,h(y_{n-1}))\bigr|
	&\leq
	\bigg(\sum\limits_{j=1}^{n-1} ( h(x_j) - h(y_j) )^2 \bigg)^{1/2}\\
	&\leq
	L d(x^{n-1},y^{n-1}).
\end{align*}
 Hence the concentration property follows from \Cref{th:clf}.
Application of \Cref{th:cqf} to $Z_n(h)$ finishes the proof.
\end{proof}

\paragraph{Gaussian concentration of the spectral variance estimator}\label{sec:conc_var}

The main result of this section is the following.
\begin{theorem}
\label{th:conc_var}
	Assume that functions $h\in\H$ and the Markov kernel
	$P$ satisfy \ref{assumptionL} and \ref{assumptionW} with parameters $L>0$, $\TC>0$, and $0<r<1$.
	Then for all $t>0$,
\begin{align}\label{eq:conc_var_fast}
	\P_{x_0}\Bigl(\bigl|\SV{h}-\E_{x_0} \bigl[\SV{h}\bigr]\bigr|>t\Bigr)
	\leq {2}
	\exp\Biggl(-\frac{n t^2}{ c K^2\, b_n\bigl(\PE[x_0]{\SV{h}} + t\bigr)} \Biggr),
\end{align}
where $c>0$ is some universal constant, $K^2=\TC L^2/(1-r)^2$, and
$b_n$ is the size of the lag window. Moreover, if additionally  \((X_k)_{k=0}^{n-1}\)
satisfies \ref{GE} with parameters \(\varsigma\), \(\rho\), and function $W$, then
for all $t<\tau$,
\begin{align}\label{eq:conc_var_slow}
	\P_{x_0}\Bigl(\bigl|\SV{h}-\E_{x_0} \bigl[\SV{h}\bigr]\bigr|>t\Bigr)
	&\leq
	2\exp\Biggl(-\frac{n t^2}{cK_{\tau}^2b_n^2} \Biggr),
\end{align}
where
\begin{align*}
	K_\tau^2 \eqdef \frac{\TC L^2}{(1-r)^2}\left( {\|h\|_{\ltwo(\pi)}^2} + \frac{\varsigma\driftfunc(x_0)\|h\|^2_{\driftfunc^{1/2}}}{1-\rho} + \frac{\tau}{b_n}\right).
\end{align*}
\end{theorem}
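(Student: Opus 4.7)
The plan is to apply the Hanson--Wright-type inequality for Markov chains from \Cref{th:conc} to the quadratic form representation $\SV{h} = Z_n(h)^\T A_n Z_n(h)$ given in \eqref{eq:qsv}, combined with the operator-norm bound $\|A_n\| \leq 2b_n/n$ provided by \Cref{lem:opsv}. Both assertions of the theorem then reduce to estimating the quantity $\PE[x_0]{\|A_n Z_n(h)\|^2} + t\|A_n\|$ appearing in the denominator of the exponent in \Cref{th:conc}.

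For the fast rate \eqref{eq:conc_var_fast}, observe that under \ref{assumptionL}+\ref{assumptionW} the hypotheses of \Cref{th:conc} hold with constant $K^2 = \TC L^2/(1-r)^2$. The crucial structural input is that $A_n$ is positive semidefinite: the lag window in \eqref{eq:kernel_coeffs} is a positive-definite sequence (its discrete Fourier transform $\hat w_n$ is non-negative, as is clear from the Fourier representation used in the proof of \Cref{lem:opsv}), so $W_n$ is PSD and hence so is $A_n = n^{-1} P^\T W_n P$ with $P = \Id_n - n^{-1}\bigone_n\bigone_n^\T$. This lets me invoke the sharp comparison
\begin{equation*}
\|A_n Z_n(h)\|^2 \leq \|A_n\|\cdot Z_n(h)^\T A_n Z_n(h) = \|A_n\|\,\SV{h},
\end{equation*}
yielding $\PE[x_0]{\|A_n Z_n(h)\|^2} \leq (2b_n/n)\PE[x_0]{\SV{h}}$. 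Substituting this together with $\|A_n\| \leq 2b_n/n$ into \Cref{th:conc} gives \eqref{eq:conc_var_fast} after absorbing absolute constants.

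For the slow rate \eqref{eq:conc_var_slow}, under the additional assumption \ref{GE} I would instead use the universally valid bound $\|A_n Z_n(h)\|^2 \leq \|A_n\|^2 \|Z_n(h)\|^2$ together with the explicit second-moment estimate from \Cref{lem:evar},
\begin{equation*}
\PE[x_0]{\|Z_n(h)\|^2} \leq n\|h\|_{\ltwo(\pi)}^2 + \frac{\varsigma W(x_0)\|h\|^2_{W^{1/2}}}{1-\rho}.
\end{equation*}
Combined with $t\|A_n\| \leq (2b_n/n)\tau$ for $t<\tau$, this produces
\begin{equation*}
\PE[x_0]{\|A_n Z_n(h)\|^2} + t\|A_n\| \;\lesssim\; \frac{b_n^2}{n}\left(\|h\|_{\ltwo(\pi)}^2 + \frac{\varsigma W(x_0)\|h\|^2_{W^{1/2}}}{1-\rho} + \frac{\tau}{b_n}\right) = \frac{b_n^2}{n}\cdot\frac{K_\tau^2}{K^2},
\end{equation*}
and plugging this into \Cref{th:conc} produces \eqref{eq:conc_var_slow} after relabeling constants.

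The main subtlety is the positive semidefiniteness of $A_n$ needed for the fast rate: without it, $\PE[x_0]{\|A_n Z_n(h)\|^2}$ cannot be tied back to $\PE[x_0]{\SV{h}}$, and one is forced into the weaker denominator $\|A_n\|^2\PE[x_0]{\|Z_n(h)\|^2}$ that drives the slow rate. Once PSD is secured for the specific lag window used, both parts are routine bookkeeping: the first part uses the structural comparison exploiting PSD, while the second part trades away this structure for an explicit $\ltwo(\pi)$/drift-norm decomposition valid under \ref{GE}.
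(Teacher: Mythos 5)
Your proof follows the paper's argument essentially verbatim: represent $\SV{h}$ as the quadratic form $Z_n(h)^{\T}A_nZ_n(h)$, apply the Hanson--Wright-type bound of \Cref{th:conc} (available under \ref{assumptionL}+\ref{assumptionW} via \Cref{th:clf} and \Cref{th:cqf}), and control the denominator using \Cref{lem:opsv} and \Cref{lem:evar}. The only deviations are cosmetic: you make explicit the positive-semidefiniteness of $A_n$ that the paper uses silently when passing from $\PE[x_0]{\|A_nZ_n(h)\|^2}$ to $(2b_n/n)\PE[x_0]{\SV{h}}$ (a point on which you are actually more careful than the paper, though your claim that the trapezoidal window's discrete Fourier transform is non-negative deserves a check), and you derive the slow rate directly from \Cref{th:conc} rather than by substituting the moment bound into \eqref{eq:conc_var_fast}, which gives the same result up to absolute constants.
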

\begin{proof}
The proof is straightforward.
We have showed that the spectral variance estimator
can be represented as a quadratic form $\SV{h}= Z_n(h)^{\T}A_nZ_n(h)$ with $\|A_n\|\leq 2 b_n/n$,
see \Cref{sec:reprsv} and \Cref{lem:opsv} therein. Now \Cref{th:conc} yields for $K^2=\TC L^2/(1-r)^2$
and all $t>0$, that
\begin{align*}
	\P_{x_0}\Bigl(\bigl|\SV{h} - \PE[x_0] {\SV{h}}\bigr|>t\Bigr)
	&\leq
	2\exp\Biggl(-\frac{t^2}{cK^2\bigl(\PE[x_0]{\| A_nZ_n(h) \|^2} + t\|A_n\|\bigr)}\Biggr)\\
	&\leq
	2\exp\Biggl(-\frac{n t^2}{2cK^2b_n\left(\PE[x_0]{\SV{h}} + t\right)} \Biggr),
\end{align*}
which establishes \eqref{eq:conc_var_fast} for a new absolute constant $c>0$.
To prove the second inequality we note that by \Cref{lem:opsv} and \Cref{lem:evar},
\[
	\PE[x_0]{\SV{h}}
	\leq \|A_n \| \PE[x_0]{\|Z_n(h)\|^2}
	\leq 2b_n{\|h\|_{\ltwo(\pi)}^2} + \frac{ 2 \varsigma\driftfunc(x_0) \|h\|^2_{\driftfunc^{1/2}} }{1-\rho}\frac{b_n}{n}\eqsp.
\]
Hence for any $0<t<\tau$, we have
\[
	\PE[x_0]{\SV{h}} + t
	\leq
	b_n \left( 2 {\|h\|_{\ltwo(\pi)}^2} + \frac{ 2 \varsigma\driftfunc(x_0)\|h\|^2_{\driftfunc^{1/2}}}{1-\rho}\frac{1}{n} + \frac{\tau}{b_n}\right).
\]

Substituting this into \eqref{eq:conc_var_fast} we deduce
\begin{align*}
	\P_{x_0}\Bigl(\bigl|\SV{h} - \PE[x_0] {\SV{h}}\bigr|>t\Bigr)
	&\leq
	2\exp\Biggl(-\frac{n t^2}{cK_{\tau}^2b_n^2} \Biggr)
\end{align*}
for a new absolute constant $c>0$ and
\begin{align*}
	K_\tau^2 \eqdef \frac{\TC L^2}{(1-r)^2}\left( {\|h\|_{\ltwo(\pi)}^2} + \frac{\varsigma\driftfunc(x_0)\|h\|^2_{\driftfunc^{1/2}}}{1-\rho} + \frac{\tau}{b_n}\right),
\end{align*}
which completes the proof.
\end{proof}

\subsection{Concentration of the spectral variance estimator for bounded functions}

\begin{theorem}\label{th:conc_var2}
Assume that
$P$ satisfies \ref{GE} and \ref{BR} with parameters \(\varsigma,\rho,\ls>0\), function $W$, and set $\Cset$.
Assume also that functions $h\in\H$ satisfy \ref{assumptionB} with parameter $B>0$.
Then for $x_0 \in \Cset$, for all functions $h\in\H$, and all $t>0$,
\begin{align}\label{eq:conc_var_db}
	\P_{x_0}\Bigl(\bigl|\SV{h} - \PE[x_0]{\SV{h}}\bigr|>t\Bigr)
	\leq {2}
	\exp\Biggl(-\frac{ t^2 n}{c K^2 b_n^2} \Biggr),
\end{align}
where $b_n$ is the size of the lag window, $K = \beta B^2$, and $\beta$ is given by
  \begin{equation}\label{eq:beta_def}
  \beta=\frac{\varsigma \ls}{1- \rho}\biggl(\frac1{\log u}+\frac{J \varsigma \ls }{1-\rho}\biggr)\eqsp.
   \end{equation}
\end{theorem}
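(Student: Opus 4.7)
The strategy is to combine the quadratic-form structure of $V_n(h)$ with a bounded-differences (McDiarmid-type) concentration inequality tailored to geometrically ergodic Markov chains satisfying the return-time condition.

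First, I would recall from Section~\ref{sec:reprsv} that $V_n(h) = Z_n(h)^\T A_n Z_n(h)$ with $Z_n(h) = (h(X_0),\ldots,h(X_{n-1}))^\T$ and $\|A_n\| \leq 2 b_n/n$ by Lemma~\ref{lem:opsv}. A closer look at $A_n$ shows that its rows have $\ell^1$-norm of order $O(b_n/n)$: the dominant contribution from $W_n/n$ has bandwidth $b_n$ and entries $O(1/n)$, while the projection corrections $n^{-1}\mathbf{1}_n\mathbf{1}_n^\T W_n$ and its transpose contribute entries of order $O(b_n/n^2)$ whose row sums are again $O(b_n/n)$. Viewing $\Phi(X_0,\ldots,X_{n-1}) \eqdef V_n(h)$ as a function of the trajectory, under~\ref{assumptionB} we have $|h|\leq B$, hence replacing a single coordinate $X_k$ by $X_k'$ alters $\Phi$ by at most $c_k \lesssim B^2 b_n/n$, giving $\sum_{k=0}^{n-1} c_k^2 \lesssim B^4 b_n^2/n$.

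Second, I would invoke a McDiarmid-type bounded-differences concentration inequality for functionals of a Markov chain satisfying \ref{GE} and \ref{BR}. The standard route uses a regeneration/split-chain construction, producing approximately i.i.d.\ excursions between successive visits to the small set $\Cset$. Under \ref{BR} the return times to $\Cset$ admit an exponential moment (bounded by $J$ at rate $\log u$), and combined with the $W$-geometric ergodicity of \ref{GE} (parameters $\varsigma,\rho$ together with $\ls = \sup_{\Cset} W$) this yields a Hoeffding-type inequality whose variance proxy is proportional to $\beta^2 \sum_k c_k^2$ with $\beta$ as in \eqref{eq:beta_def}. Substituting $\sum_k c_k^2 \lesssim B^4 b_n^2/n$ produces a bound of the form $\exp(-t^2 n /(c(\beta B^2)^2 b_n^2))$, which is exactly \eqref{eq:conc_var_db} with $K = \beta B^2$.

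The main obstacle is the second step: establishing a McDiarmid-type concentration inequality for Markov trajectories with the sharp constant $\beta$. Since the coordinates $X_k$ are not independent, the classical McDiarmid inequality is unavailable and a regenerative decomposition is required. The two terms composing $\beta$ are the natural markers of this argument: the term $\varsigma \ls/((1-\rho)\log u)$ originates in bounding the maximum excursion length through the exponential moment of the return time, while the term $J\varsigma^2\ls^2/(1-\rho)^2$ arises when controlling the contribution of the initial and final incomplete excursions, where the restriction $x_0 \in \Cset$ is essential to bound the first block. The ingredients are classical (in the spirit of Adamczak and Bertail--Cl\'emen\c{c}on), but matching the resulting constants exactly to $K = \beta B^2$ with $\beta$ of the explicit form \eqref{eq:beta_def} requires careful bookkeeping and constitutes the heart of the proof.
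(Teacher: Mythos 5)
Your proposal is correct and follows essentially the same route as the paper: establish the bounded-differences property $|V_n(h)-V_n^{(i)}(h)|\lesssim B^2 b_n/n$ (the paper does this directly from the autocovariance decomposition rather than through the row $\ell^1$-norms of $A_n$, but the bound is the same) and then apply a McDiarmid-type inequality for geometrically ergodic Markov chains. The only difference is that the step you flag as the heart of the argument is not re-derived in the paper but cited off the shelf (\citet[Theorem~23.3.1]{douc:moulines:priouret:2018} with the explicit constant $\beta$ from \citet{havet:moulines:2019b}), and its proof is indeed the regenerative argument you describe.
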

\begin{proof}
The main idea of the proof is to show that the spectral variance satisfies the bounded difference property.
First we rewrite the lag $s$ sample autocovariance function as
\begin{align*}
	\ecovcoeff[n]{h}{s} &= \frac1n  \sum_{k=0}^{n-s-1} \Bigl(h(X_k) - \pi_n(h)\Bigr) \Bigl(h(X_{k+s}) - \pi_n(h)\Bigr)\\
	&=\frac1n \sum_{k=0}^{n-s-1} h(X_k)h(X_{k+s}) - \frac{\pi_n(h)}{n}  \sum_{k=s}^{n-s-1}h(X_{k}).
\end{align*}
Let $\ecovcoeff[n]{h, i}{s}$ and $V^{(i)}_{n}(h)$ be the sample autocovariance function and the
spectral variance determined on another sample $X_0,\ldots,X_{i-1},X'_i,X_{i-1},\ldots,X_{n-1}$,
where we have replaced $X_i$ by $X'_i$. It holds
\begin{align*}
	\bigl|\ecovcoeff[n]{h}{s} - \ecovcoeff[n]{h,i}{s}\bigr| \leq 2 B^2 + \frac{2(n-2s+n)}{n^2}B^2 \leq \frac{6B^2}{n},
\end{align*}
and since $|w_n(s)|\leq1$ by definition,
\begin{align*}
	\bigl|\SV{h} - \SV[(i)]{h}\bigr|
	\leq 2 b_n  \sup_{s}|w_n(s)| \cdot |\ecovcoeff[n]{h}{s} - \ecovcoeff[n]{h,i}{s}|
	\leq \frac{12b_nB^2}{n}.
\end{align*}
The bounded differences inequality for Markov chains from \citet[Theorem~23.3.1]{douc:moulines:priouret:2018}) with explicit constants from \citet{havet:moulines:2019b}  yields
\begin{align*}
	\P_{x_0}\Bigl(\bigl|\SV{h} - \PE[x_0]{\SV{h}}\bigr|>t\Bigr)
	\leq {2}
	\exp\Biggl(-\frac{ t^2 n}{ 144 \beta B^4  b_n^2} \Biggr),
	\ \text{with}\ 
	\beta=\frac{\varsigma \ls}{1- \rho}\biggl(\frac1{\log u}+\frac{J \varsigma \ls }{1-\rho}\biggr)\eqsp.
\end{align*}
which completes the proof.
\end{proof}

\bibliographystyle{plainnat}

\begin{thebibliography}{39}
\providecommand{\natexlab}[1]{#1}
\providecommand{\url}[1]{\texttt{#1}}
\expandafter\ifx\csname urlstyle\endcsname\relax
  \providecommand{\doi}[1]{doi: #1}\else
  \providecommand{\doi}{doi: \begingroup \urlstyle{rm}\Url}\fi

\bibitem[Adamczak(2015)]{ra15}
Radoslaw Adamczak.
\newblock {A} note on the {H}anson-{W}right inequality for random vectors with
  dependencies.
\newblock \emph{Electron. Commun. Probab.}, 20\penalty0 (71):\penalty0 1--13,
  2015.

\bibitem[Assaraf and Caffarel({1999})]{Assaraf1999}
Roland Assaraf and Michel Caffarel.
\newblock {Zero-variance principle for Monte Carlo algorithms}.
\newblock \emph{{PHYSICAL REVIEW LETTERS}}, {83}\penalty0 ({23}):\penalty0
  {4682--4685}, {DEC 6} {1999}.

\bibitem[Bakry et~al.(2013)Bakry, Gentil, and Ledoux]{GentilBakryLedoux}
Dominique Bakry, Ivan Gentil and Michel Ledoux.
\newblock \emph{Analysis and geometry of {M}arkov diffusion operators}, volume
  348.
\newblock Springer Science \& Business Media, 2013.

\bibitem[Belomestny et~al.(2017)Belomestny, Iosipoi, and
  Zhivotovskiy]{belomestny2017variance}
Denis Belomestny, Leonid Iosipoi and Nikita Zhivotovskiy.
\newblock Variance reduction via empirical variance minimization: convergence
  and complexity.
\newblock \emph{arXiv:1712.04667}, 2017.

\bibitem[Belomestny et~al.(2018)Belomestny, Iosipoi, and Zhivotovskiy]{biz2018}
Denis Belomestny, Leonid Iosipoi and Nikita Zhivotovskiy.
\newblock Variance reduction in monte carlo estimators via empirical variance
  minimization.
\newblock \emph{Doklady Mathematics}, 98\penalty0 (2):\penalty0 494--497, 2018.

\bibitem[Bobkov and G{\"o}tze(1999)]{BobkovGotze}
Sergey Bobkov and Friedrich G{\"o}tze.
\newblock Exponential integrability and transportation cost related to
  logarithmic sobolev inequalities.
\newblock \emph{Journal of Functional Analysis}, 163\penalty0 (1):\penalty0
  1--28, 4 1999.
\newblock ISSN 0022-1236.
\newblock \doi{10.1006/jfan.1998.3326}.

\bibitem[{Brosse} et~al.(2019){Brosse}, {Durmus}, {Meyn}, {Moulines}, and
  {Radhakrishnan}]{durmus:moulines:2019}
Nicolas {Brosse}, Alain {Durmus}, Sean {Meyn}, Eric {Moulines} and Anand
  {Radhakrishnan}.
\newblock {Diffusion approximations and control variates for MCMC}.
\newblock \emph{arXiv:1808.01665}, 2019.

\bibitem[Dalalyan(2017)]{ad2014}
Arnak Dalalyan.
\newblock Theoretical guarantees for approximate sampling from smooth and
  log-concave densities.
\newblock \emph{Journal of the Royal Statistical Society Series B (Statistical
  Methodology)}, 79\penalty0 (3):\penalty0 651--676, 2017.

\bibitem[Dellaportas and Kontoyiannis(2012)]{dellaportas2012control}
Petros Dellaportas and Ioannis Kontoyiannis.
\newblock Control variates for estimation based on reversible {M}arkov chain
  monte carlo samplers.
\newblock \emph{Journal of the Royal Statistical Society: Series B (Statistical
  Methodology)}, 74\penalty0 (1):\penalty0 133--161, 2012.

\bibitem[Devroye et~al.(1996)Devroye, Gy\"orfi, and Lugosi]{devroy95}
Luc Devroye, L\'aszl\'o Gy\"orfi and G\'abor Lugosi.
\newblock \emph{{A} {P}robabilistic {T}heory of {P}attern {R}ecognition}.
\newblock Springer, New York, 1996.

\bibitem[Djellout et~al.(2004)Djellout, Guillin, and Wu]{Guillin2004}
Hac\' ene Djellout, Arnaud Guillin and Liming Wu.
\newblock Transportation cost-information inequalities and applications to
  random dynamical systems and diffusions.
\newblock \emph{Ann. Probab.}, 32\penalty0 (3B):\penalty0 2702--2732, 2004.
\newblock ISSN 0091-1798.
\newblock \doi{10.1214/009117904000000531}.
\newblock URL \url{https://doi.org/10.1214/009117904000000531}.

\bibitem[Douc et~al.(2018)Douc, Moulines, Priouret, and
  Soulier]{douc:moulines:priouret:2018}
Randal Douc, Eric Moulines, Pierre Priouret and Philippe Soulier.
\newblock \emph{{M}arkov chains}.
\newblock Springer Series in Operations Research and Financial Engineering.
  Springer, Cham, 2018.
\newblock ISBN 978-3-319-97703-4; 978-3-319-97704-1.
\newblock \doi{10.1007/978-3-319-97704-1}.
\newblock URL \url{https://doi.org/10.1007/978-3-319-97704-1}.

\bibitem[{Durmus} and {Moulines}(2016)]{durmus:moulines:2018}
Alain {Durmus} and Eric {Moulines}.
\newblock {High-dimensional {B}ayesian inference via the Unadjusted {L}angevin
  Algorithm}.
\newblock \emph{arXiv:1605.01559}, 2016.

\bibitem[Durmus and Moulines(2017)]{dm2017}
Alain Durmus and \'Eric Moulines.
\newblock Non-asymptotic convergence analysis for the unadjusted {L}angevin
  algorithm.
\newblock \emph{Ann. Appl. Probab.}, 27\penalty0 (3):\penalty0 1551--1587,
  2017.

\bibitem[Flegal and Jones(2010)]{flegal:jones:2010}
James Flegal and Galin Jones.
\newblock Batch means and spectral variance estimators in {M}arkov chain monte
  carlo.
\newblock \emph{Ann. Statist.}, 38\penalty0 (2):\penalty0 1034--1070, 04 2010.
\newblock \doi{10.1214/09-AOS735}.
\newblock URL \url{https://doi.org/10.1214/09-AOS735}.

\bibitem[Gelman et~al.(2014)Gelman, Carlin, Stern, Dunson, Vehtari, and
  Rubin]{gelman2014bayesian}
Andrew Gelman, John Carlin, Hal Stern, David Dunson, Aki Vehtari and
 Donald Rubin.
\newblock \emph{Bayesian data analysis}.
\newblock Texts in Statistical Science Series. CRC Press, Boca Raton, FL, third
  edition, 2014.

\bibitem[Glasserman(2013)]{glasserman2013monte}
Paul Glasserman.
\newblock \emph{{M}onte {C}arlo {M}ethods in {F}inancial {E}ngineering},
  volume~53.
\newblock Springer Science \& Business Media, 2013.

\bibitem[Gobet(2016)]{GobetBook}
Emmanuel Gobet.
\newblock \emph{Monte-{C}arlo methods and stochastic processes}.
\newblock CRC Press, Boca Raton, FL, 2016.

\bibitem[Haario et~al.(1999)Haario, Saksman, and Tamminen]{Haario1999}
Heikki Haario, Eero Saksman and Johanna Tamminen.
\newblock Adaptive proposal distribution for random walk metropolis algorithm.
\newblock \emph{Computational Statistics}, 14\penalty0 (3):\penalty0 375--395,
  Sep 1999.
\newblock ISSN 0943-4062.
\newblock \doi{10.1007/s001800050022}.
\newblock URL \url{https://doi.org/10.1007/s001800050022}.

\bibitem[{Havet} et~al.(2019){Havet}, {Lerasle}, {Moulines}, and
  {Vernet}]{havet:moulines:2019b}
Antoine {Havet}, Matthieu {Lerasle}, Eric {Moulines} and Elodie {Vernet}.
\newblock {A quantitative Mc Diarmid's inequality for geometrically ergodic
  Markov chains}.
\newblock \emph{arXiv: 1907.02809}, 2019.

\bibitem[Henderson(1997)]{henderson1997variance}
Shane Henderson.
\newblock \emph{Variance reduction via an approximating {M}arkov process}.
\newblock PhD thesis, Stanford University, 1997.

\bibitem[Jarner and Hansen(2000)]{JarHansen2000}
S\o ren~Fiig Jarner and Ernst Hansen.
\newblock Geometric ergodicity of {M}etropolis algorithms.
\newblock \emph{Stochastic Process. Appl.}, 85\penalty0 (2):\penalty0 341--361,
  2000.
\newblock ISSN 0304-4149.
\newblock \doi{10.1016/S0304-4149(99)00082-4}.
\newblock URL \url{https://doi.org/10.1016/S0304-4149(99)00082-4}.

\bibitem[Jones(2004)]{jones2004}
Galin Jones.
\newblock On the {M}arkov chain central limit theorem.
\newblock \emph{Probability Surveys}, 1:\penalty0 299--320, 2004.

\bibitem[Marin and Robert(2007)]{marin2007bayesian}
Jean-Michel Marin and Christian Robert.
\newblock \emph{Bayesian core: a practical approach to computational {B}ayesian
  statistics}.
\newblock Springer Texts in Statistics. Springer, New York, 2007.
\newblock ISBN 978-0-387-38979-0; 0-387-38979-2.

\bibitem[Mijatovi\'c and Vogrinc(2018)]{mijatovic2018}
Mijatovi, Aleksandar and Vogrinc, Jure.
\newblock On the Poisson equation for Metropolis-Hastings chains.
\newblock \emph{Bernoulli}, 24\penalty0 (3):\penalty0 2401--2428, 2018. 
\newblock URL \url{https://doi.org/10.3150/17-BEJ932}.

\bibitem[Mira et~al.(2013)Mira, Solgi, and Imparato]{mira2013zero}
Antonietta Mira, Reza Solgi and Daniele Imparato.
\newblock Zero variance {M}arkov chain {M}onte {C}arlo for {B}ayesian
  estimators.
\newblock \emph{Statistics and Computing}, 23\penalty0 (5):\penalty0 653--662,
  2013.

\bibitem[Nickl and P{\"o}tscher(2007)]{nickl2007bracketing}
Richard Nickl and Benedikt P{\"o}tscher.
\newblock {B}racketing {M}etric {E}ntropy {R}ates and {E}mpirical {C}entral
  {L}imit {T}heorems for {F}unction {C}lasses of {B}esov- and {S}obolev-{T}ype.
\newblock \emph{Journal of Theoretical Probability}, 20\penalty0 (2):\penalty0
  177--199, 2007.

\bibitem[Oates et~al.(2016)Oates, Cockayne, Briol, and
  Girolami]{oates2016convergence}
Chris Oates, Jon Cockayne, Fran{\c{c}}ois-Xavier Briol and Mark Girolami.
\newblock {C}onvergence {R}ates for a {C}lass of {E}stimators {B}ased on
  {S}tein's {I}dentity.
\newblock \emph{arXiv:1603.03220}, 2016.

\bibitem[Oates et~al.(2017)Oates, Girolami, and Chopin]{oates2017control}
Chris Oates, Mark Girolami and Nicolas Chopin.
\newblock Control functionals for Monte Carlo integration.
\newblock \emph{Journal of the Royal Statistical Society: Series B (Statistical
  Methodology)}, 79\penalty0 (3):\penalty0 695--718, 2017.

\bibitem[Oates et~al.(2019)Oates, Cockayne, Briol, and
  Girolami]{oates2010stein}
Chris Oates, Jon Cockayne, Fran\c{c}ois-Xavier Briol and Mark Girolami.
\newblock Convergence rates for a class of estimators based on {S}tein's
  method.
\newblock \emph{Bernoulli}, 25\penalty0 (2):\penalty0 1141--1159, 2019.
\newblock ISSN 1350-7265.
\newblock \doi{10.3150/17-bej1016}.
\newblock URL \url{https://doi.org/10.3150/17-bej1016}.

\bibitem[Papamarkou et~al.(2014)Papamarkou, Mira, and Girolami]{papamarkou2014}
Theodore Papamarkou, Antonietta Mira and Mark Girolami.
\newblock Zero variance differential geometric {M}arkov chain monte carlo
  algorithms.
\newblock \emph{{B}ayesian Anal.}, 9\penalty0 (1):\penalty0 97--128, 03 2014.
\newblock \doi{10.1214/13-BA848}.
\newblock URL \url{https://doi.org/10.1214/13-BA848}.

\bibitem[Robert and Casella(1999)]{christian1999monte}
Christian Robert and George Casella.
\newblock \emph{{M}onte {C}arlo {S}tatistical {M}ethods}.
\newblock Springer, New York, 1999.


\bibitem[Roberts and Rosenthal(2004)]{roberts2004}
Gareth Roberts and Jeffrey Rosenthal.
\newblock General state space {M}arkov chains and {MCMC} algorithms.
\newblock \emph{Probab. Surveys}, 1:\penalty0 20--71, 2004.
\newblock \doi{10.1214/154957804100000024}.
\newblock URL \url{https://doi.org/10.1214/154957804100000024}.

\bibitem[Roberts and Tweedie(1996{\natexlab{a}})]{roberts:tweedie:1996}
Gareth Roberts and Richard Tweedie.
\newblock Exponential convergence of {L}angevin distributions and their
  discrete approximations.
\newblock \emph{Bernoulli}, 2\penalty0 (4):\penalty0 341--363,
  1996{\natexlab{a}}.
\newblock ISSN 1350-7265.
\newblock \doi{10.2307/3318418}.
\newblock URL \url{http://dx.doi.org/10.2307/3318418}.

\bibitem[Roberts and Tweedie(1996{\natexlab{b}})]{RobTweed1996}
Gareth Roberts and Richard Tweedie.
\newblock Geometric convergence and central limit theorems for multidimensional
  {H}astings and {M}etropolis algorithms.
\newblock \emph{Biometrika}, 83\penalty0 (1):\penalty0 95--110,
  1996{\natexlab{b}}.
\newblock ISSN 0006-3444.
\newblock \doi{10.1093/biomet/83.1.95}.
\newblock URL \url{https://doi.org/10.1093/biomet/83.1.95}.

\bibitem[Rubinstein and Kroese(2016)]{rubinstein2016simulation}
Reuven Rubinstein and Dirk Kroese.
\newblock \emph{{S}imulation and the {M}onte {C}arlo {M}ethod}, volume~10.
\newblock John Wiley \& Sons, 2016.

\bibitem[{South} et~al.(2018){South}, {Oates}, {Mira}, and {Drovand
  i}]{south2018regularised}
Leah {South}, Chris {Oates}, Antonietta {Mira} and Christopher {Drovand
  i}.
\newblock {Regularised Zero-Variance Control Variates for High-Dimensional
  Variance Reduction}.
\newblock \emph{arXiv:1811.05073}, 2018.

\bibitem[van~de Geer(2000)]{geer00}
Sara van~de Geer.
\newblock \emph{{E}mpirical {P}rocesses in {M}-{E}stimation}.
\newblock Cambridge, 2000.

\bibitem[Wong and Shen(1995)]{wong95}
Wing Wong and Xiaotong Shen.
\newblock {P}robability {I}nequalities for {L}ikelihood {R}atios and
  {C}onvergence {R}ates of {S}ieve {MLES}.
\newblock \emph{The Annals of Statistics}, 23\penalty0 (2):\penalty0 339--362,
  1995.

\end{thebibliography}

\end{document}